\newtheorem{theorem}{Theorem}
\newtheorem*{theorem*}{Theorem}
\newtheorem{proposition}[theorem]{Proposition}
\newtheorem{lemma}[theorem]{Lemma}
\newtheorem{corollary}[theorem]{Corollary}
\newtheorem{claim}{Claim}[theorem]
\theoremstyle{definition}
\theoremstyle{remark}
\newtheorem{remark}[theorem]{Remark}
\newcommand{\ProofEndBox}{{\ifhmode\unskip\nobreak\hfil\penalty50 \else
          \leavevmode\fi\quad\vadjust{}\nobreak\hfill$\Box$
            \finalhyphendemerits=0 \par}}
\newcommand{\R}{{\mathbb{R}}}
\newcommand{\Q}{{\mathbb{Q}}}
\newcommand{\CP}{{\mathbb{C}P}}
\DeclareMathOperator{\RHS}{RHS}
\DeclareMathOperator{\LHS}{LHS}
\newcommand{\Z}{{\mathbb{Z}}}
\newcommand\BB{\mathcal{B}}
\newcommand\DD{\mathcal{D}}
\newcommand\GG{\mathcal{G}}
\newcommand\PP{\mathcal{P}}
\newcommand\RR{\mathcal{R}}
\newcommand\bP{\mathbf{P}}
\newcommand\makevec[1]{{\bf #1}}
\def \pp {\makevec{p}}
\def \ww {\makevec{w}}
\def \dd {\makevec{d}}
\def \ee {\makevec{e}}
\DeclareMathOperator{\sub}{sub}
\newcommand{\bequal}{$\raisebox{1pt}{$\kern1.5mm
  {\scriptscriptstyle\circ}\kern-2.1mm\rule[-.4mm]{3mm}{.5pt}\kern-3mm\rule[1.2mm]{3mm}{.5pt}\kern1mm$}$}
\newcounter{sideremark}
\newcommand{\marrow}{\stepcounter{sideremark}\marginpar{$\boldsymbol{\longleftarrow\scriptstyle\arabic{sideremark}}$}}
\newcommand{\martin}[1]{{\color{blue}\vskip 5pt\textsf{*** (Martin) \marrow #1\vskip 5pt}}}
\title{Simpler algorithmically unrecognizable 4-manifolds\thanks{Supported by the GA\v{C}R
grant no. 22-19073S.}}
\author[1]{Martin Tancer}
\affil[1]{\small Department of Applied Mathematics, Charles University, Malostransk\'{e} n\'{a}m.
25, 118~00~~Praha~1, Czech Republic}
\date{}
\begin{document}
\maketitle
\begin{abstract}
  Markov proved that there exists an \emph{unrecognizable} 4-manifold, that is,
  a 4-manifold for which the homeomorphism
  problem is undecidable. In this paper we consider the question how close 
  we can get to $S^4$ with an unrecognizable manifold. One of our achievements is
  that we show a way to remove so-called Markov's trick from the proof of
  existence of such a manifold. This trick contributes to the complexity of the
  resulting manifold. We also show how to decrease the deficiency (or the
  number of relations) in so-called Adian-Rabin set which is another ingredient that
  contributes to the complexity of the resulting manifold. Altogether, our
  approach allows to show that the connected sum $\#_9(S^2 \times S^2)$ is
  unrecognizable while the previous best result is the unrecognizability of
  $\#_{12}(S^2 \times S^2)$ due to Gordon.
\end{abstract}

\section{Introduction}

\paragraph{Unrecognizable $4$-manifolds.}
One of the basic questions in topology is whether two topological spaces are
homeomorphic. Even when we restrict our attention to nice spaces such as
manifolds, this problem cannot be (in general) treated algorithmically as
Novikov~\cite{volodin-kuznetsov-fomenko74} has shown that the recognition of
$d$-sphere for $d \geq 5$ is algorithmically unsolvable. In lower
dimensions the homeomorphism problem for manifolds is trivial for $d = 1$ and
easy for $d=2$ (by classification of surfaces). In dimension $3$, keeping the
complexity questions aside, the homeomorphism problem of closed orientable
$3$-manifolds is decidable; see~\cite{kuperberg19}.

In this paper, we focus on $d=4$ for which the decidability questions are not
yet fully understood. On the one hand, it is not known whether recognition of
$4$-sphere is algorithmically solvable. On the other hand, there are known
$4$-manifolds for which the recognition problem is undecidable. From now on, we
say that a manifold $M$ is \emph{unrecognizable}, if it is algorithmically
undecidable whether a given triangulated manifold $N$ is homeomorphic to $M$.
Markov~\cite{markov60} has shown that there exists an unrecognizable
$4$-manifold and it turns out that this manifold is $\#_k(S^2 \times S^2)$, the
connected sum of $k$ copies of $S^2 \times S^2$ for some $k$. 

Because recognizability of $S^4$ is unknown it is also meaningful to ask how
close can we make it to $S^4$ with unrecognizable manifolds; that is, we aim to
find simplest possible unrecognizable manifolds. Note that $\#_k(S^2 \times
S^2)$ is simply connected. If we restrict ourselves to simply connected
manifolds, then we can measure the proximity of $M$ to $S^4$ by the dimension
of the middle homology, that is, by $\beta_2(M)$ where $\beta_i$ denotes the
$i$th Betti number. We remark that $\beta_2(\#_k (S^2 \times S^2)) = 2k$.

When optimizing $k$ above, Stan'ko~\cite{stanko05} has shown that $\#_{14} (S^2
\times S^2)$ (see also~\cite{chernavsky-leksine06}) is unrecognizable. This was
further recently improved by Gordon~\cite{gordon21} to showing that $\#_{12}
(S^2 \times S^2)$ is unrecognizable. We will push this further by showing that
$\#_9 (S^2 \times S^2)$ is unrecognizable; see Corollary~\ref{c:9} below. 
However, apart from the improvement itself, our motivation is also to present
the ideas that yield this improvement. We sketch some of the ideas below the
statement of Corollary~\ref{c:9} in a paragraph `Proof method'.


\paragraph{Group presentations and Markov's method.}
We will be working with finite presentations of groups of a form
$\PP = \langle x_1, \dots, x_n | r_1, \dots, r_m \rangle$. The letters $x_1,
\dots, x_n$ are \emph{generators} (of the free group $\langle x_1, \dots, x_n
| \rangle$ on these generators) while
$r_1, \dots, r_m$ are \emph{relations}. Formally, the relations are words in
alphabet $\{x_1, \dots x_m$, $x_1^{-1}, \dots, x_m^{-1}\}$ (subject to
identifications $x_ix_i^{-1} = 1 = x_i^{-1}x_i$) and then $\PP$ presents
the quotient of the aforementioned free group by the relations normal subgroup
generated by the relations. (Intuitively, adding $r_j$ to the relations means
identifying $r_j$ with the trivial element $1$.) Given a group presentation
$\PP
= \langle x_1, \dots, x_n | r_1, \dots, r_m
\rangle$, the \emph{deficiency} of $\PP$ is the value $m - n$, that is, the
number of relations minus the number of generators.

Now we briefly describe Markov's method
following~\cite{chernavsky-leksine06, kirby20, gordon21}. The first step
is to
obtain an Adian-Rabin set $\bP$ in which each presentation has $m$
relations where $m$ is some fixed number. By
an \emph{Adian-Rabin set} we mean a set of recursively enumerable group presentations such
that there is no algorithm to decide whether a given presentation $\PP \in
\bP$ presents a trivial group.  The next step is to build, for a given
presentation $\PP \in \bP$ presenting a group $G_{\PP}$, a 5-manifold $M_{\PP}$ such that
$\pi_1(\partial M_{\PP}) \cong G_{\PP}$. The final step, called Markov's trick, is to
`stabilize' $\partial M_{\PP}$ by suitably adding to it additional $2$-handles,
obtaining a 4-manifold say $N_{\PP}$. Then, it is possible to show that $N_{\PP} \cong
\#_m(S^2 \times S^2)$ if and only if $G_{\PP}$ is a trivial group.

It follows from a work of Gordon~\cite{gordon95}, building on earlier results
of Borisov~\cite{borisov69} and Matiyasevich~\cite{matiyasevich67}, that there is an Adian-Rabin set with 14 relations which explains that
$\#_{14}(S^2 \times S^2)$ is unrecognizable. In order to get that $\#_{12}(S^2
\times S^2)$ is unrecognizable, Gordon~\cite{gordon21} provided an Adian-Rabin
set with 13 relations as well as he has shown that it is sufficient to add one
less additional handle when `stabilizing'.

\paragraph{Our approach.}
As we wrote earlier, our aim is to remove the Markov trick from the proof. This
in turn means that the complexity of the unrecognizable manifold is not
corresponding to the number of relations but rather to the deficiency of the
presentation.

\begin{theorem}
\label{t:no_markov}
  Assume that there is an Adian-Rabin set where each presentation has deficiency $k$. Then $\#_k(S^2 \times S^2)$ is unrecognizable. 
\end{theorem}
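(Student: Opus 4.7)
Given a deficiency-$k$ presentation $\PP = \langle x_1, \dots, x_n \mid r_1, \dots, r_{n+k}\rangle$ of a group $G_\PP$, the plan is to associate to $\PP$, algorithmically from $\PP$, a triangulated closed $4$-manifold $N_\PP$ with the property that $N_\PP \cong \#_k(S^2 \times S^2)$ if and only if $G_\PP$ is trivial. Combined with the undecidability of triviality inside the Adian-Rabin set, this immediately yields the theorem.

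I would recycle the $5$-manifold $M_\PP$ from the standard recipe sketched in the introduction: a regular neighborhood in $S^5$ of the presentation $2$-complex $K_\PP$ (one $0$-cell, $n$ $1$-cells for the generators, $n+k$ $2$-cells for the relators), taken with respect to a standard embedding; I then simply set $N_\PP := \partial M_\PP$, with no further stabilization of the boundary. The $4$-manifold $N_\PP$ is closed and carries a triangulation computable from $\PP$. The already-cited identification $\pi_1(\partial M_\PP) \cong G_\PP$ settles the easy direction: if $G_\PP \ne 1$, then $N_\PP$ is not simply connected and so cannot be homeomorphic to $\#_k(S^2 \times S^2)$.

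The heart of the argument is the converse: assuming $G_\PP = 1$, show $N_\PP \cong \#_k(S^2 \times S^2)$. A short homology computation gives $\chi(K_\PP) = 1+k$ and $H_1(K_\PP) = 0$, so $H_2(K_\PP) \cong \Z^k$; Hurewicz and Whitehead then yield a homotopy equivalence $K_\PP \simeq \bigvee^k S^2$. Since $\pi_1(K_\PP) = 1$ and $\mathrm{Wh}(1) = 0$, this homotopy equivalence is automatically a simple homotopy equivalence. Applying a regular-neighborhood theorem in codimension $\ge 3$, $M_\PP$ is then PL-homeomorphic to the regular neighborhood of a standardly embedded $\bigvee^k S^2$ in $S^5$, namely $\natural_k(S^2 \times B^3)$; passing to boundaries gives $N_\PP \cong \partial\, \natural_k(S^2 \times B^3) = \#_k(S^2 \times S^2)$.

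The main obstacle is this regular-neighborhood step: turning a simple homotopy equivalence of $2$-complexes into a PL homeomorphism of their regular neighborhoods inside $S^5$. Handle-theoretically, one wants to reduce the given decomposition of $M_\PP$ (one $0$-handle, $n$ $1$-handles, $n+k$ $2$-handles) by handle slides and $1/2$-handle cancellations to a decomposition with just one $0$-handle and $k$ $2$-handles; the crucial point is that in dimension $5$ there is enough room to cancel complementary handle pairs freely, so no stabilizing handles have to be introduced into the $4$-dimensional boundary (this is precisely what replaces Markov's trick). The remaining $2$-handles are then attached along unknotted $2$-spheres in $S^4$ with framings unique up to isotopy (since $\pi_2(SO(3)) = 0$), forcing the total $5$-manifold to be $\natural_k(S^2 \times B^3)$ and its boundary to be $\#_k(S^2 \times S^2)$.
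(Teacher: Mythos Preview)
Your route—identify $M_\PP$ itself as $\natural_k(S^2\times B^3)$ via simple-homotopy and regular-neighborhood/thickening theory, rather than analyse the intersection form of $\partial M_\PP$—is a legitimate alternative to the paper's proof and, once made precise, gives a stronger conclusion (a PL identification of the $5$-manifold, not merely a topological identification of its boundary). The paper instead computes that $\partial M_\PP$ is simply connected with $H_2\cong\Z^{2k}$ and signature zero (Rohlin), uses the classification of indefinite unimodular forms together with Freedman's theorem to reduce to the two candidates $\#_k(S^2\times S^2)$ and $\#_k(\CP^2\#(-\CP^2))$, and finally excludes the latter via a Stiefel--Whitney non-embeddability argument (Theorem~\ref{t:non_embed}). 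Your approach avoids Freedman and Rohlin altogether, at the cost of importing Wall's thickening machinery.

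However, your handle-theoretic paragraph contains an error at exactly the point where the real content sits. A $5$-dimensional $2$-handle $D^2\times D^3$ is attached along $S^1\times D^3$, so the attaching sphere is a \emph{circle} in $S^4$, not a $2$-sphere; and framings of that circle are parametrized by $\pi_1(SO(3))=\Z/2$, not by $\pi_2(SO(3))=0$. Thus the framing is \emph{not} unique: the twisted framing yields the nontrivial $D^3$-bundle over $S^2$, whose boundary is the nontrivial $S^2$-bundle over $S^2$, namely $\CP^2\#(-\CP^2)$. Excluding this twisted case is precisely the content the paper isolates as Theorem~\ref{t:non_embed}. In your framework the fix is to note that $M_\PP\subset\R^5$ is parallelizable, hence $w_2(M_\PP)=0$, which forces the untwisted framing on each $2$-handle; equivalently, in Wall's classification of $5$-thickenings of a $2$-complex $K$ by tangential data in $[K,BO(5)]$, the embedding of $M_\PP$ in $\R^5$ forces the trivial class. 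So your outline is salvageable, but the sentence you wrote to justify the key step is off by one in the homotopy-group index and needs this $w_2$ argument in its place.
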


We also provide an Adian-Rabin set with deficiency $9$.

\begin{theorem}
\label{t:def9}
  There exists an Adian-Rabin set where each presentation has deficiency $9$.
\end{theorem}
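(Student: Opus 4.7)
The plan is to follow the Adian--Rabin framework but to track the deficiency (rather than only the number of relations) of the resulting presentations, and to tune the construction to hit the value~$9$. First, I would fix a finitely presented group $H_0$ with unsolvable word problem -- i.e., with no algorithm that decides, given a word $w$ over its generators, whether $w =_{H_0} 1$ -- in the spirit of \cite{borisov69, matiyasevich67, gordon21}. A standard Adian--Rabin-style encoding then uniformly produces, from each word $w$, a presentation $\PP_w = \langle x_1, \dots, x_n \mid r_1, \dots, r_m \rangle$ of a group $G_w$ such that $G_w$ is trivial iff $w =_{H_0} 1$. In such an encoding -- which adds a bounded number of new generators and relations on top of $H_0$ plus a single $w$-dependent relation that triggers collapse -- the counts $n$ and $m$ are independent of $w$, so the deficiency $m-n$ depends only on $H_0$ and on the encoding, and the family $\{\PP_w\}_w$ is recursively enumerable because $\PP_w$ is uniformly computable from $w$.

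The core of the proof is a bookkeeping argument to arrange $m - n = 9$. Starting from Gordon's $13$-relation presentation, the deficiency is $13 - n_0$ for some fixed $n_0$ visible in his construction. If this is strictly greater than $9$, I would lower it by Tietze-style manipulations on the $w$-independent portion of the presentation: eliminate generators together with their defining relations, or apply classical relator-merging tricks that trade generators for relations in a deficiency-reducing way. If, on the other hand, the natural deficiency is already at most $9$, it can be raised to exactly $9$ by appending trivial consequence relations, which leaves the presented group unchanged. Moreover, the encoding on top of $H_0$ can be arranged to be deficiency-neutral: each new auxiliary generator is matched by exactly one new relation, so that the only net contribution to $m-n$ comes from the single $w$-dependent relation.

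The main obstacle is verifying that each of these modifications preserves the Adian--Rabin biconditional ``$G_w$ trivial iff $w =_{H_0} 1$'', and in particular the embedding of $H_0$ into $G_w$ standardly used to derive $w \neq_{H_0} 1 \Rightarrow G_w \neq 1$. Since every modification is a Tietze transformation applied to the portion of the presentation that is independent of $w$, the biconditional is preserved throughout, and the final count yields deficiency exactly $9$, giving Theorem~\ref{t:def9}.
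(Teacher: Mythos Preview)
Your proposal has a genuine gap: you treat achieving deficiency $9$ as a matter of routine bookkeeping via Tietze moves, but this is precisely the substantive content of the theorem. Tietze transformations that eliminate a generator also eliminate a defining relation, so they \emph{preserve} deficiency; there is no ``classical relator-merging trick'' that lowers it. Gordon's Adian--Rabin set has $2$ generators and $13$ relations, hence deficiency $11$, and closing the gap from $11$ to $9$ is exactly what requires new ideas. Likewise, your claim that the Adian--Rabin encoding on top of $H_0$ can be made ``deficiency-neutral except for one $w$-dependent relation'' is unjustified: in Miller's construction the collapse direction (``$w=1\Rightarrow G_w=1$'') needs one relation per generator of $H_0$ in order to kill each $x_i$, so the encoding adds $n$ to the deficiency, not $1$.

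What the paper actually does is replace that bank of $n$ kill-relations by a shorter list $r'_1,\dots,r'_{m'}$ such that adding $r'_1,\dots,r'_{m'}$ to $H_0$ already trivialises it (Theorem~\ref{t:ar_sequence}); the resulting deficiency is $m+m'-n$. To make $m'$ small relative to $n$, the paper engineers a specific base presentation $\RR$ on $5$ generators and $12$ relations that dies after adding just two relations $a=1$, $c=1$ (Lemma~\ref{l:trivialize_R}), giving $12+2-5=9$. Obtaining such an $\RR$ is the hard part: it requires modifying Borisov's relations~\eqref{e:b4} to~\eqref{e:b4'} so that the exponents of $d$ and $e$ become unbalanced (and re-verifying Borisov's undecidability proof, Theorem~\ref{t:B'}), and replacing Borisov's balanced encoding of $s_1,s_2,k,t$ into $a,b$ by a new encoding with exponent sum $1$ that satisfies $C'(1/6)$, together with a new small-cancellation argument (Theorem~\ref{t:encoding}) showing the encoded presentation still has unsolvable word problem. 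None of this is a Tietze transformation of Gordon's presentation; it is a redesign of the underlying group so that it admits a short trivialising set.
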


Then we immediately get:

\begin{corollary}
\label{c:9}
  $\#_9(S^2 \times S^2)$ is unrecognizable.
\end{corollary}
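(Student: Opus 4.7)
The plan is to combine Theorem~\ref{t:no_markov} and Theorem~\ref{t:def9} directly; no further geometric or group-theoretic work should be required. First, I would invoke Theorem~\ref{t:def9} to produce an Adian--Rabin set $\bP$ in which every presentation has deficiency exactly $9$. Then I would feed $\bP$ into Theorem~\ref{t:no_markov} with $k=9$, and the conclusion is immediately that $\#_9(S^2\times S^2)$ is unrecognizable.

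The only potential obstacle is a definitional mismatch: Theorem~\ref{t:no_markov} presupposes that the input Adian--Rabin set consists of presentations all having a common deficiency $k$, and we must ensure that the object produced by Theorem~\ref{t:def9} satisfies exactly this requirement (in particular, it is recursively enumerable and recognizing the trivial group on it is undecidable). Since both statements share the same notion of Adian--Rabin set introduced earlier in the paper, this compatibility is essentially by design, and the corollary follows in one line. All the genuine content lives in the two input theorems --- removing Markov's trick so that the bound is governed by deficiency rather than by the raw number of relations (Theorem~\ref{t:no_markov}), and constructing an Adian--Rabin set of deficiency $9$ (Theorem~\ref{t:def9}).
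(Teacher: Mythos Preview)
Your proposal is correct and matches the paper's approach exactly: the paper simply states that Corollary~\ref{c:9} follows immediately from Theorems~\ref{t:no_markov} and~\ref{t:def9}, without writing out any further argument.
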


We hope that Theorem~\ref{t:no_markov} may be used for further
improvements getting close to unrecognizability of $S^4$. Minimizing the number
of relations in an Adian-Rabin set, which is a core of the previous approaches,
has been intensively studied in the literature. On the other hand, minimizing
 the deficiency has not been studied. It seems much more realistic to get
 Adian-Rabin sets with very low deficiency, possibly even\footnote{Even
 deficiency 0 could be realistic; however presentations with deficiency 0 are
 intensively studied in the literature under the name balanced presentations.}
1, than Adian-Rabin sets with a very low number of relations.

We also point out that $\#_9(S^2 \times S^2)$ is not the only manifold, for
which we get unrecognizability. Following~\cite{chernavsky-leksine06} we
get: 

\begin{corollary}
\label{c:stabilize}
  Let $M$ be a connected triangulable closed $4$-manifold. Then the connected
  sum $M \# (\#_9(S^2 \times S^2))$ is unrecognizable.
\end{corollary}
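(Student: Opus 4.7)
The plan is to follow the strategy of Chernavsky--Leksine~\cite{chernavsky-leksine06} and reduce recognition of $\#_9(S^2\times S^2)$ to recognition of $M\#\#_9(S^2\times S^2)$; the result then follows from Corollary~\ref{c:9}.

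Enumerate an Adian--Rabin set $\{\PP_i\}$ of deficiency $9$ as provided by Theorem~\ref{t:def9}, and let $N_i$ denote the closed triangulated $4$-manifold associated to $\PP_i$ by the construction underlying Theorem~\ref{t:no_markov}. That construction should produce, in a uniformly computable way, manifolds $N_i$ with $\pi_1(N_i)\cong G_{\PP_i}$ and with $N_i\cong\#_9(S^2\times S^2)$ whenever $G_{\PP_i}$ is trivial. After fixing once and for all a triangulation of $M$, one can algorithmically compute a triangulation of $M\# N_i$ from $\PP_i$. The key claim is that $M\# N_i\cong M\#\#_9(S^2\times S^2)$ if and only if $G_{\PP_i}$ is trivial; given that the latter is undecidable over our Adian--Rabin set, any recognition algorithm for $M\#\#_9(S^2\times S^2)$ would solve it.

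The forward direction of the claim is immediate. For the converse, suppose $G_{\PP_i}\ne 1$. Van Kampen's theorem gives $\pi_1(M\# N_i)\cong \pi_1(M)*G_{\PP_i}$, while $\pi_1(M\#\#_9(S^2\times S^2))\cong\pi_1(M)$. Since $M$ is compact and triangulated, $\pi_1(M)$ is finitely generated, so Grushko's theorem applies and yields
\[
\mathrm{rank}\bigl(\pi_1(M)*G_{\PP_i}\bigr)=\mathrm{rank}(\pi_1(M))+\mathrm{rank}(G_{\PP_i})>\mathrm{rank}(\pi_1(M)).
\]
Hence the two fundamental groups, and thus the two manifolds, are not isomorphic.

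The only nontrivial ingredient is Grushko's theorem; everything else is a routine reassembly of the proof of Corollary~\ref{c:9} in the presence of an additional connected summand $M$. The mildest subtlety is merely checking that the construction behind Theorem~\ref{t:no_markov} really does yield $\pi_1(N_i)\cong G_{\PP_i}$, which should be read off directly from that construction (indeed, Markov's trick in the classical approach only serves to absorb $\pi_1$ into handle additions, so dropping it preserves this fundamental group).
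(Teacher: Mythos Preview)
Your argument is correct and follows the same route the paper indicates (it simply cites Chernavsky--Leksine for precisely this step, noting that the only property needed of $N_i=\partial M_{\PP_i}$ is the dichotomy ``either $N_i\cong\#_9(S^2\times S^2)$ or $\pi_1(N_i)\neq 1$''); your explicit use of Grushko's theorem to conclude $\pi_1(M)\ast G_{\PP_i}\not\cong\pi_1(M)$ is exactly the argument hidden in that reference. A cosmetic remark: you have swapped the labels ``forward'' and ``converse'' relative to how you stated the biconditional, but the logic itself is fine.
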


For the proof of Corollary~\ref{c:stabilize}, we refer to the proof of
Theorem in Section~5 of~\cite{chernavsky-leksine06}. (Theorem
in~\cite{chernavsky-leksine06} claims an analogous statement with $M \#
(\#_{14}(S^2 \times S^2))$.) The proof partially uses the construction of
$M_{\PP}$ but the only important property is that either $\partial M_{\PP}$ is
homeomorphic to $\#_9(S^2 \times S^2)$ or it has a nontrivial fundamental group.

An example of another simply connected unrecognizable $4$-manifold is the
connected sum $\#_{10}(\CP^2) \# (\#_9(-\CP^2))$ where $\CP^2$ is the complex
projective space and $-\CP^2$ stands for $\CP^2$ with the opposite orientation.
This example is obtained from Corollary~\ref{c:stabilize} by setting $M =
\CP^2$. Note that $(S^2 \times S^2) \# \CP^2 \cong \CP^2 \# \CP^2 \# (-\CP^2)$;
see~\cite[I.~Corollary~4.3]{kirby89}. 

\paragraph{Proof method.}
For the proof of Theorem~\ref{t:no_markov} our main tool is to employ
intersection forms and use theorems of Rohlin and Freedman. The first one
allows to determine the signature of the form; the second one allows to
determine the manifold from the form, if the fundamental group is trivial. At
some point we also need to rule out the connected sum of several copies  $\CP^2
\# (-\CP^2)$. For this we follow ideas of Speyer and Malin. These steps are
significantly different from earlier proofs of Markov's theorem.

The proof of Theorem~\ref{t:def9} is based on modifications of a construction of
Borisov~\cite{borisov69} of a group presentation with 5 letters and 12
relations for which the word problem is undecidable. Such modifications allow
the desired improvement on Adian-Rabin sets but the cost is that we have
to either modify, or completely rework technical proofs of Borisov. In more
detail, we perform two modifications. The first one requires to check that we
can use original Borisov's proof (Theorem~1 in~\cite{borisov69}) essentially in
verbatim (but it is technical to check this). The second modification, however,
requires to rework completely Borisov's embedding theorem (Theorem~2
in~\cite{borisov69}). Here we use so called small cancellation theory which
is essentially based on properties of plane graphs. This part of the
argument is very combinatorial, at some point it even uses properties of
contact representations of disks in the plane, and might be of independent
interest.

\paragraph{Organization of the paper.} Theorem~\ref{t:no_markov} is proved in
Section~\ref{s:no_markov}. For Theorem~\ref{t:def9} we aim to present the main ideas first. From this
reason, we first provide a proof of Theorem~\ref{t:def9} in
Section~\ref{s:modify_borisov} modulo some
auxiliary results. 
In order to
remove doubts of logical dependencies of the results proved, below we present
a diagram of dependencies of the main auxiliary results yielding the proof of
Theorem~\ref{t:def9} (and the sections where these auxiliary results are proved):


\begin{tikzcd}
 \hbox{Thm.~\ref{t:def9} (Sec.~\ref{s:modify_borisov})} &  \arrow[l,Rightarrow] \hbox{Thm.~\ref{t:R_word_unsolvable}  (Sec.~\ref{s:new_encoding})}
  &\arrow[l,Rightarrow] \hbox{Thm.~\ref{t:B'}
  (Appendix~\ref{a:borisov_modified})} \\
   \arrow[u,Rightarrow]
  \hbox{Thm.~\ref{t:ar_sequence} (Sec.~\ref{s:ar_sequence})}    
   & \arrow[u,Rightarrow]\hbox{Thm.~\ref{t:encoding} (Sec.~\ref{s:new_encoding})}
\end{tikzcd}
\paragraph{Terminology used.} We will need a few notions from algebraic
topology throughout the paper which we do not define here, including the
notions of manifold, smooth manifold, simplicial complex, CW-complex or Euler characteristic. In
general, we refer for these notions to textbooks such as~\cite{hatcher02,
tomdieck08}. In Section~\ref{s:new_encoding} we also use some basic notions from
graph theory (graphs, paths, walks, cycles); here we refer to~\cite{diestel18}.

\section{Unrecognizable 4-manifolds without Markov's trick}
\label{s:no_markov}

The aim of this section is to prove Theorem~\ref{t:no_markov}.
For the proof,
we will need several preliminaries. First, we recall some properties of a well
known construction of a manifold $M_{\PP}$ mentioned in the introduction; see,
e.g.,~\cite{boone-haken-poenaru66}.

\begin{proposition}
\label{p:M}
Let $\PP = \langle x_1, \dots, x_n| r_1, \dots, r_m \rangle$ be a 
  presentation of a group $G_{\PP}$. Then there is an algorithmically
  constructible compact $5$-manifold $M_{\PP}$ with
  boundary, possessing the following properties:
  \begin{enumerate}[(i)]
    \item $M_{\PP}$ is smoothly embeddable in $\R^5$;
    \item $\pi_1(M_{\PP}) \cong  \pi_1(\partial  M_{\PP}) \cong G_{\PP}$; and
    \item if $G_{\PP}$ is trivial, then $H_2(\partial M)$ is a free product of $2k$
      copies of $\Z$ where $k$ is the deficiency of $\PP$.
\end{enumerate}
\end{proposition}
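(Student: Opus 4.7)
The plan is to realize $M_{\PP}$ as a smooth regular neighborhood, in $\R^5$, of the presentation $2$-complex $K_{\PP}$ of $\PP$. Recall that $K_{\PP}$ has one $0$-cell, one $1$-cell per generator $x_i$, and one $2$-cell attached along each relation $r_j$. By general position, any finite $2$-complex embeds piecewise-linearly into $\R^5$, and (in this low dimension) a smooth regular neighborhood of such an embedding yields a compact smooth $5$-submanifold of $\R^5$, which we take to be $M_{\PP}$. Property~(i) is immediate from this construction. The construction is algorithmic because the combinatorial data of $K_{\PP}$, of the embedding, and of the neighborhood can all be produced from $\PP$ by explicit manipulations. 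Dually, $M_{\PP}$ acquires a handle decomposition with one $0$-handle, $n$ $1$-handles (one per generator), and $m$ $2$-handles (one per relation, with attaching circle dictated by $r_j$).

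For property~(ii), the isomorphism $\pi_1(M_{\PP}) \cong G_{\PP}$ is clear because $M_{\PP}$ deformation retracts onto $K_{\PP}$, and $\pi_1(K_{\PP}) = G_{\PP}$ by the standard Van Kampen computation for presentation complexes. For $\pi_1(\partial M_{\PP}) \cong \pi_1(M_{\PP})$, I would invoke the dual handle decomposition: starting from a collar of $\partial M_{\PP}$, the manifold $M_{\PP}$ is reconstructed by attaching handles of indices $5-2, 5-1, 5-0$, i.e., only handles of index at least $3$. Up to homotopy, attaching such a handle is the same as attaching a cell of dimension at least $3$, which does not alter $\pi_1$. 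Iterating through all dual handles gives the desired isomorphism.

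For property~(iii), I combine a handle count with Poincar\'e duality. The handle decomposition gives $\chi(M_{\PP}) = 1 - n + m = 1 + k$. Since $M_{\PP}$ is a compact $5$-manifold with boundary, the standard identity $\chi(\partial M_{\PP}) = 2\chi(M_{\PP})$ (obtained by doubling $M_{\PP}$ and using that closed odd-dimensional manifolds have vanishing Euler characteristic) yields $\chi(\partial M_{\PP}) = 2 + 2k$. When $G_{\PP}$ is trivial, part~(ii) shows that $\partial M_{\PP}$ is a closed simply connected orientable $4$-manifold (orientability is inherited from $M_{\PP} \subset \R^5$). Hence $\beta_0 = \beta_4 = 1$ and $\beta_1 = \beta_3 = 0$, forcing $\beta_2(\partial M_{\PP}) = 2k$. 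Torsion-freeness of $H_2(\partial M_{\PP})$ then follows from Poincar\'e duality combined with the universal coefficient theorem: one has $H_2 \cong H^2 \cong \Hom(H_2, \Z) \oplus \mathrm{Ext}(H_1, \Z)$, and the Ext summand vanishes because $H_1(\partial M_{\PP}) = 0$. Thus $H_2(\partial M_{\PP}) \cong \Z^{2k}$.

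The main subtlety I expect is the verification of $\pi_1(\partial M_{\PP}) \cong \pi_1(M_{\PP})$ via the dual decomposition; one needs simple connectivity of the attaching spheres of the dual $3$-, $4$-, and $5$-handles and a transversality argument, both of which are standard but should be spelled out. Everything else is bookkeeping with handles, Euler characteristics, and elementary homological algebra.
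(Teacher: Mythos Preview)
Your proposal is correct and follows the same construction as the paper (regular neighborhood of the presentation $2$-complex in $\R^5$), but the verification of~(iii) takes a genuinely different route. The paper first computes $\beta_2(M_{\PP})=k$ from $\chi(K_{\PP})=1+k$ and simple connectivity, and then applies the ``half lives, half dies'' lemma (e.g.\ \cite[Lemma~3.5]{hatcher07}) to the pair $(M_{\PP},\partial M_{\PP})$ to obtain $\beta_2(\partial M_{\PP})=2\beta_2(M_{\PP})=2k$. You instead bypass $\beta_2(M_{\PP})$ entirely: from the doubling identity $\chi(\partial M_{\PP})=2\chi(M_{\PP})$ you read off $\chi(\partial M_{\PP})=2+2k$ and then solve for $\beta_2$ using $\beta_0=\beta_4=1$, $\beta_1=\beta_3=0$. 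Your argument is more elementary and self-contained; the paper's has the advantage of also recording $\beta_2(M_{\PP})$, though that information is not used elsewhere. For~(ii) the paper simply cites \cite{kirby20} for $\pi_1(\partial M_{\PP})\cong\pi_1(M_{\PP})$, whereas your dual-handle argument makes this step explicit; that argument is standard and correct (the inclusion $\partial M_{\PP}\hookrightarrow M_{\PP}$ is, up to homotopy, the inclusion of a subcomplex into one obtained by attaching cells of dimension $\ge 3$).
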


\begin{proof}
  The first step is to build a simplicial $2$-complex $K_{\PP}$ such that
  $\pi_1(M_{\PP}) \cong G_{\PP}$. This simplicial complex can be built by
  subdividing
  a CW-complex obtained from a wedge of $n$ 1-spheres (each 1-sphere is
  oriented and corresponds to one of the generators) and attaching $2$-cells to
  the 1-spheres according to the relations in $\PP$. 

  The complex $K_{\PP}$ linearly embeds in $\R^5$ by a general position argument.
  Now we pick three sufficiently small numbers $\rho_0 > \rho_1 > \rho_2 > 0$
  and we consider a manifold $M'_{\PP} \subseteq \R^5$ consisting of all points in
  $\R^5$ in a distance at most $\rho_i$ from some $i$-face of $K_{\PP}$. If the
  radii $\rho_i$ are suitably chosen, then $M'_{\PP}$ deformation retracts to
  $K_{\PP}$. Thus $\pi_1(M'_{\PP}) \cong \pi_1(K_{\PP}) \cong G_{\PP}$. By `smoothening the
  corners' we obtain the desired smooth manifold $M_{\PP}$ smoothly embedded in
  $\R^5$ with $\pi_1(M_{\PP}) \cong \pi_1(M'_{\PP}) \cong G_{\PP}$ ($M_{\PP}$ is homotopy
  equivalent with $M'_{\PP}$). But it also follows
  that $\pi_1(M_{\PP}) \cong \pi_1(\partial M_{\PP})$; see the proof of Lemma~3
  in~\cite{kirby20}. This establishes (i) and (ii).

  For (iii), if $G_\PP$ is trivial, we first note that $H_2(\partial M)$ (and
  $H^2(\partial M)$) are free $\Z$-modules (there is no torsion). Indeed, this
  follows from the Poincaré duality and the universal coefficient theorem
  (using that $H_1(\partial M) = 0$). Thus it is sufficient to perform
  computations over $\Q$.
  Next we 
  note that the Euler characteristic of $K_{\PP}$ equals $1 - n + m =
  1 + k$. (Computing the Euler characteristic is easy. This formula is, for
  example, mentioned below proof of Theorem~2.7 in Chapter I.
  of~\cite{hog-angeloni-metzler-sieradski93}.)  The Euler characteristic can be
  equivalently expressed as $\beta_0(K_{\PP}) - \beta_1(K_{\PP}) +
  \beta_2(K_{\PP})$. If
  $G_{\PP}$ is trivial, then $K_{\PP}$ is simply connected and thus
  $\beta_1(K_{\PP}) = 0$.
  Because $K_{\PP}$ is connected, we also get $\beta_0(K_{\PP}) = 1$. Thus we deduce
  that $\beta_2(K_{\PP}) = k$. Consequently, $\beta_2(M_{\PP}) = k$ as
  $M_{\PP}$ and $K_{\PP}$
  are homotopy equivalent. Finally, by `half lives, half dies';
  see~\cite{putman22} or \cite[Lemma 3.5]{hatcher07} (in the latter case, the
  proof is given only in dimension 3 but it extends to odd dimensions), we
  get $\beta_2(\partial M_{\PP}) = 2k$. 
\end{proof}

\paragraph{Intersection form.} Given a closed smooth simply
connected\footnote{The form can be also defined
without the assumptions smooth, closed or simply connected. If we do not assume
simply connected, then it is necessary to factor out the torsion from $H_2(M,
\Z)$. If we do not assume closed, then the form can be defined in (at least
two) different ways yielding different results. In any case, in this paper we
use the form only for closed smooth simply 
connected 4-manifolds.} 4-manifold $M$,
there is a symmetric bilinear form $\Omega \colon H_2(M, \Z) \times H_2(M, \Z) \to \Z$
called the \emph{intersection form}; $H_2(M, \Z)$ is in this case a free
$\Z$-module. For a proper definition and more details
we refer to~\cite{kirby89}. We will only use this form implicitly, using known properties
of the form and some results employing the form. One property we need is
that the form is \emph{unimodular}; that is, the matrix corresponding to the
form has determinant $\pm 1$. 

We will also need a partial characterization of unimodular symmetric bilinear
forms on a free (finitely generated) $\Z$-module $X$; that is, $X$ is
isomorphic to a direct sum of $\ell$ copies of $\Z$. We reproduce the
characterization from~\cite{kirby89}.
From now on, by a form, we
mean an unimodular symmetric bilinear form on $X$. (Unimodularity implies that a
from has a full rank $\ell$.) For a form $\Gamma \colon X \times X \to \Z$, we
say that $\Gamma$ is \emph{positive} (or \emph{negative}) \emph{definite} if $\Gamma(x,x) > 0$ (or
$\Gamma(x,x) < 0$) for every $x \in X, x \neq 0$. Otherwise, $\Gamma$ is
\emph{indefinite}.
The symmetric bilinear forms on $X$ cannot be perhaps fully characterized in
general but they can be fully characterized in the indefinite case. (In this
case, the matrix corresponding to the form has both positive and negative
eigenvalues.) 

A form $\Gamma$ is \emph{even} if $\Gamma(x,x)$ is even for every $x \in X$;
otherwise, it is \emph{odd}. The \emph{signature} of a form is the number of
positive eigenvalues minus the number of negative eigenvalues of a matrix
corresponding to the form. The indefinite forms are fully characterized by
rank, signature and the property whether they are odd or even:

All odd indefinite forms are forms with matrix (in a suitable basis):
\[
  \begin{pmatrix}
    1 & & & & & \\
    & \ddots & & & & \\
    & & 1 & & & \\
    & & & - 1 & & \\
    & & & & \ddots & \\
    & & & & & -1\\
\end{pmatrix}
\]
with $p > 1$ ones and $q > 1$ minus ones. The rank of such form is $\ell = p+q$
and the signature is equal to $p-q$.

All even indefinite forms are those that can be written as (up to change of basis)
$\pm \oplus_r E_8 \oplus_s \begin{pmatrix} 0 & 1 \\ 1 & 0 \\ \end{pmatrix}$
  where
\begin{itemize}
  \item $E_8$ is a specific (positively definite) form of rank $8$.
   \item $\oplus$ stands for a direct sum of forms. If $\Gamma$ is a form with
     matrix $A$ and $\Lambda$ is a form with matrix $B$, then $\Gamma \oplus
     \Lambda$ is the form with block-diagonal matrix $\begin{pmatrix} A & 0 \\
     0 & B \\ \end{pmatrix}$.
   \item With slight abuse of notation, we identify $\begin{pmatrix} 0 & 1 \\ 1
   & 0 \\ \end{pmatrix}$ with the form with this matrix.
 \item The number $s$ of summands $\begin{pmatrix} 0 & 1 \\ 1
   & 0 \\ \end{pmatrix}$ is positive.
\end{itemize}
The rank of such a form is equal to $8r + 2s$, the signature is equal to $\pm
8r$. 


\paragraph{Theorems of Rohlin and Freedman.}

We will also need the following theorems of Rohlin and Freedman.

\begin{theorem}[Rohlin; see, e.g.~{\cite[Chapter~3]{scorpan05}}]
\label{t:rohlin}
  An oriented closed smooth $4$-manifold bounds an oriented $5$-manifold if and only
  if the signature (of the intersection form) is zero. 
\end{theorem}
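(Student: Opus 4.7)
The plan is to prove the two implications separately. For the ``only if'' direction, suppose $M^4 = \partial W^5$ with $W$ a compact oriented $5$-manifold. I would apply the half-lives-half-dies principle (the same tool already used in the proof of Proposition~\ref{p:M}(iii)) to $W$: this produces a half-dimensional subspace $L \subseteq H_2(M;\Q)$, namely the kernel of the inclusion-induced map $H_2(M;\Q) \to H_2(W;\Q)$. The key observation is that $L$ is isotropic for the intersection form of $M$, since two $2$-cycles in $M$ that each bound a $3$-chain in $W$ can be arranged so that their intersection number in $M$ vanishes: one of the bounding $3$-chains meets the other cycle in a compact $1$-chain inside $W$ whose signed boundary sits in $M$ and computes the pairing as zero. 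A unimodular symmetric bilinear form on a space of rank $2k$ admitting an isotropic subspace of dimension $k$ must have $p = q = k$, hence signature zero.

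For the ``if'' direction I would appeal to Thom's computation of the low-dimensional oriented cobordism ring $\Omega^{SO}_\ast$. Concretely, $\Omega^{SO}_k = 0$ for $k = 1, 2, 3$ (closed oriented manifolds of these dimensions all bound, by elementary arguments for $k = 1, 2$ and by a stable homotopy calculation for $k = 3$), while $\Omega^{SO}_4 \cong \Z$ is generated by $\CP^2$ with the isomorphism realized by the signature. Granted this, $\sigma(M) = 0$ implies $[M] = 0$ in $\Omega^{SO}_4$, which is precisely the statement that $M$ bounds an oriented $5$-manifold. The proof that $\Omega^{SO}_4 \cong \Z$ proceeds via Thom's transversality theorem, which identifies $\Omega^{SO}_4$ with $\pi_{4+N}(MSO(N))$ for large $N$ and reduces the question to a stable homotopy calculation; that the signature realizes the isomorphism then follows from the Hirzebruch signature theorem together with $\sigma(\CP^2) = 1$.

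The main obstacle is the ``if'' direction, which genuinely requires the content of Thom's bordism theorem and is not formal. The ``only if'' direction, by contrast, is an elementary Poincar\'e--Lefschetz duality argument that closely parallels what the paper has already used in Proposition~\ref{p:M}. If one wanted an alternative route to the ``if'' direction avoiding the full spectrum computation, one could instead try to exhibit an explicit nullcobordism by combining the classification of indefinite unimodular forms recalled above with a handle-theoretic construction: a signature-zero form is, after stabilization by hyperbolic summands, realized on a connected sum of copies of $S^2 \times S^2$ and of $\CP^2 \# (-\CP^2)$, each of which visibly bounds; but making this cobordism-theoretic rather than merely form-theoretic still needs a substantial input such as Wall's theorem on simply connected $5$-dimensional cobordisms.
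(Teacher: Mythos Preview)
The paper does not prove this theorem: it is stated as a known result with a reference to Scorpan's book, and immediately afterwards the paper explicitly remarks that only the easier ``only if'' implication is needed. So there is no proof in the paper to compare your proposal against.

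That said, your outline is essentially the standard argument and is correct. The ``only if'' direction is fine: the kernel of $i_*\colon H_2(M;\Q)\to H_2(W;\Q)$ is half-dimensional by half-lives-half-dies, and your transversality argument (a bounding $3$-chain for one class meets the other $2$-cycle in a compact $1$-chain whose signed boundary computes the intersection number) shows it is isotropic; the linear-algebra conclusion that a half-rank isotropic subspace forces signature zero is correct. The ``if'' direction genuinely requires Thom's identification $\Omega^{SO}_4\cong\Z$ via the signature, as you say, and there is no elementary shortcut.

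Your closing paragraph's alternative route is not really viable, though: the classification of indefinite forms only tells you about the form, not the manifold, and since Rohlin's theorem is about \emph{arbitrary} oriented closed $4$-manifolds (not simply connected ones), neither form classification nor Wall's simply connected $h$-cobordism results will substitute for the bordism-group computation.
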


 We remark that we only need the easier implication: if the $4$-manifold bounds
 a $5$-manifold, then the signature is zero.

The following theorem is an immediate corollary of Theorem~1.5 (uniqueness)
in~\cite{freedman82}. (If the form is odd, only one case in the statement of Theorem~1.5
in~\cite{freedman82} can be smooth.)

\begin{theorem}[Freedman]
\label{t:freedman}
 Given an (integral) unimodular symmetric bilinear form $\Gamma$, then there is at most
  one smooth simply connected 4-manifold (up to a homeomorphism) which realizes $\Gamma$ as its
  intersection form.
\end{theorem}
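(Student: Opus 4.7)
The plan is simply to quote Freedman's classification of simply connected closed topological $4$-manifolds and combine it with the fact that a smooth structure forces the Kirby--Siebenmann invariant to vanish. Concretely, I would first recall Freedman's theorem (Theorem~1.5 of~\cite{freedman82}) in the following form: a closed simply connected topological $4$-manifold $M$ is determined up to homeomorphism by the pair $(\Gamma_M, ks(M))$, where $\Gamma_M$ is the intersection form and $ks(M) \in \Z/2$ is the Kirby--Siebenmann invariant. Moreover, the data are constrained: when $\Gamma_M$ is even, $ks(M)$ is forced to be $\sigma(\Gamma_M)/8 \pmod 2$, whereas when $\Gamma_M$ is odd both values of $ks$ occur and yield two distinct homeomorphism types with that intersection form.

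Next I would invoke the standard fact that for a smooth (equivalently PL) $4$-manifold the Kirby--Siebenmann obstruction vanishes, since $ks$ is precisely the obstruction to existence of a PL structure. Hence any smooth closed simply connected $4$-manifold realizing the form $\Gamma$ must have $ks = 0$.

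Putting the two ingredients together gives the theorem. If $\Gamma$ is even, Freedman's theorem already produces at most one topological type with that form, so certainly at most one smooth type up to homeomorphism. If $\Gamma$ is odd, Freedman produces exactly two topological types realizing $\Gamma$, but only the one with $ks = 0$ can carry a smooth structure; so again at most one smoothable homeomorphism type occurs. This is precisely the content of the parenthetical remark made by the author after the statement.

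The only ``obstacle'' is citing Freedman's theorem accurately and being careful to separate the odd-form case from the even-form case; there is no new mathematics to carry out here, as the hard analytic input (the disc-embedding theorem in dimension four) is wholly black-boxed inside the citation. The subtlety worth flagging in the written proof is that uniqueness is only up to \emph{homeomorphism}, not diffeomorphism, which is automatic from the topological nature of Freedman's classification.
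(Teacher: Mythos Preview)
Your proposal is correct and matches the paper's own treatment: the paper does not give an independent proof but simply cites Theorem~1.5 of~\cite{freedman82} and remarks parenthetically that in the odd case only one of the two topological types can be smooth. Your elaboration via the Kirby--Siebenmann invariant is exactly the standard way to unpack that remark, so there is nothing to add.
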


\paragraph{Non-embeddability of $\#_k (\CP^2 \# - \CP^2)$.}

Last ingredient we need for a proof of Theorem~\ref{t:no_markov} is the
following nonembeddability result. 

\begin{theorem}
\label{t:non_embed}
  Let $N$ be a smooth manifold homeomorphic to $\#_k (\CP^2 \# - \CP^2)$ (but
  not necessarily diffeomorphic). Then $N$ does not admit a smooth embedding
  into $\R^5$.
\end{theorem}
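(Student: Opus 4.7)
My plan is to argue that a smooth embedding of $N$ into $\R^5$ would force the intersection form of $N$ to be even, contradicting the fact that any manifold homeomorphic to $\#_k(\CP^2 \# -\CP^2)$ has an odd intersection form.

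First I would compactify to $S^5 = \R^5 \cup \{\infty\}$, so that $N$ becomes an oriented closed codimension-one smooth submanifold of $S^5$. Since $H^1(N;\Z/2) = 0$, the normal bundle is trivial, so $N$ separates $S^5$ into two compact smooth $5$-manifolds $W_1$ and $W_2$ sharing the boundary $N$. Applying Mayer-Vietoris to $S^5 = W_1 \cup W_2$ and using $H_2(S^5) = H_3(S^5) = 0$, the inclusions induce an isomorphism $H_2(N;\Z) \cong H_2(W_1;\Z) \oplus H_2(W_2;\Z)$. Letting $L_j := \ker\bigl((i_j)_*\colon H_2(N;\Z) \to H_2(W_j;\Z)\bigr)$, this yields an internal $\Z$-module splitting $H_2(N;\Z) = L_1 \oplus L_2$.

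The key step is to verify that each $L_j$ is isotropic with respect to the intersection form $\Omega$ on $H_2(N;\Z)$. This is the integral incarnation of the ``half lives, half dies'' phenomenon: classes in $H_2(N;\Z)$ that bound chains in the compact $5$-manifold $W_j$ have vanishing pairwise intersection in $N$, proved by a direct transverse-intersection argument inside $W_j$. With this in hand, writing $x = x_1 + x_2$ with $x_i \in L_i$ gives $\Omega(x,x) = 2\,\Omega(x_1,x_2)$, so $\Omega$ is an even form.

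Finally, the intersection form is a topological invariant (it is Poincar\'{e}-dual to the cup product on $H^2(N;\Z)$), so $\Omega$ must agree with the intersection form of $\#_k(\CP^2 \# -\CP^2)$, namely $k\langle 1\rangle \oplus k\langle -1\rangle$. This form is odd---for instance, a generator coming from $\CP^1 \subset \CP^2$ has self-intersection $+1$---which contradicts the conclusion of the previous paragraph. The main technical point I expect to be slightly delicate is establishing the isotropy of $L_j$ at the integral level; the rest of the argument is formal once the Mayer-Vietoris splitting is in place.
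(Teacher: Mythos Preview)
Your proof is correct and takes a genuinely different route from the paper's. The paper argues via characteristic classes: since the intersection form of $\#_k(\CP^2 \# -\CP^2)$ is odd, Wu's formula gives $w_2(T_N)\neq 0$, so $T_N$ is stably nontrivial; but a smooth embedding $N\hookrightarrow\R^5$ would make $T_N\oplus\nu$ trivial, hence $T_N$ stably trivial, a contradiction. You instead show directly that an embedding into $S^5$ forces the intersection form to be even, using the Mayer--Vietoris splitting $H_2(N;\Z)=L_1\oplus L_2$ with each $L_j$ isotropic. Both arguments collide with the same obstruction (the form is odd), but yours is more self-contained---no Stiefel--Whitney classes, no Wu formula---at the price of the Mayer--Vietoris and isotropy verifications, while the paper's is a two-line appeal to cited machinery. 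The step you flag as ``slightly delicate'' is standard: for $\alpha,\beta\in L_j$ choose $\tilde\alpha\in H_3(W_j,N)$ with $\partial\tilde\alpha=\alpha$; then $\alpha\cdot_N\beta = \tilde\alpha\cdot_{W_j}(i_j)_*\beta = 0$, and this holds integrally. The internal direct-sum decomposition $H_2(N)=L_1\oplus L_2$ over $\Z$ also follows cleanly from the Mayer--Vietoris isomorphism, since $L_j$ is exactly the preimage of the $j$th coordinate axis.
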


We point out that the special case of nonembeddability of $N$ with the standard
smooth structure has been proved by Speyer in a MathOverflow 
answer~\cite{speyer23}. The proof we present here is strongly inspired by
Speyer's proof as well as by a remark of Malin in the same post.

 In the proof of Theorem~\ref{t:non_embed} we need real vector bundles. We will
 use terminology from Chapter~1 of~\cite{hatcher17}. Intuitively an
 $n$-dimensional vector bundle is a map $p\colon E \to B$ where for every $b \in
 B$ the preimage $p^{-1}(b)$ (called a \emph{fiber}) has a structure of $\R^n$; it depends continuously
 on $b$; and two fibers over distinct points of $B$ are disjoint.
 For a precise definition we refer to~\cite{hatcher17}. The space $E$ is called
 the \emph{total space} and $B$ is called the \emph{base space}. Two real vector
 bundles with the same base space admit a direct sum denoted $\oplus$;
 intuitively, this makes the direct sum in every fiber but for a precise
 definition, we again refer to~\cite{hatcher17}. A real vector bundle $E \to B$ is \emph{stably trivial}
 if there is a positive integer $n$ such that $E \oplus \underline{\R^n}$ is
 trivial where $\underline{\R^n}$ is the trivial bundle over ${\R^n}$.
 Otherwise, $E \to B$ is \emph{stably nontrivial}.

 Every smooth manifold $M$ admits an important bundle $T_M$ called the
 \emph{tangent bundle} of $M$. If $M$ is embedded in $\R^m$ for some $m$, 
 then the fiber at a point $x \in M$ can be identified with a collection of tangent
 vectors at $x$ in the given embedding. We also need Stiefel-Whitney classes
 in the proof but we use them only implicitly in references to the literature.
 
\begin{proof}
The intersection form of $\#_k (\CP^2 \# - \CP^2)$ is well known---up to a choice
of basis it is a form with diagonal matrix with $k$-times $1$ and $k$-times
  $-1$; see~\cite{kirby89,scorpan05}. In particular, this form is odd. As a corollary of Wu's formula,
  explicitly stated in~\cite[Lemma~4.1, Chap. II]{kirby89} or~\cite[Sec.~4.3]{scorpan05}, we get that the second
  Stiefel-Whitney class of the tangent bundle of $N$, that is $w_2(T_N)$,
  is nontrivial. It follows that $T_N$ is
  stably nontrivial; see Chapter~3, including Example~3.5, in~\cite{hatcher17}. 
  
  On the other hand, let us assume that $N$ admits a smooth embedding into
  $\R^5$. Then the tangent bundle of $T_N$ is stably trivial as taking the
  direct sum with the normal bundle (from the embedding) yields a trivial
  bundle (coming from the trivial bundle on $\R^5$ restricted to the embedding
  of $N$; see Section~1.1, `Direct Sums' in~\cite{hatcher17} for an analogous
  example with spheres). A contradiction.
\end{proof}

%

\begin{proof}[Proof of Theorem~\ref{t:no_markov}]
  Let $\PP$ a presentation of deficiency $k$ coming from the Adian-Rabin set in
  the statement. Let $M_{\PP}$ be the manifold from Proposition~\ref{p:M}. We aim to
  show that $\partial M_{\PP} \cong \#_k(S^2 \times S^2)$ if and only if
  $G_{\PP}$
  (presented by $\PP$) is trivial. This will prove the result.

  If $G_{\PP}$ is not trivial, then $\pi_1(\partial M_{\PP})$ is not trivial as well, by
  Proposition~\ref{p:M}(ii), which means that $M \not\cong \#_k(S^2 \times S^2)$.

  On the other hand, if $G_{\PP}$ is trivial, then we also 
  know that $H_2(\partial M)$ is a free product of $2k$ copies of
  $\Z$ by Proposition~\ref{p:M}(iii). 
  As $\partial M$ bounds $M$, we deduce from Theorem~\ref{t:rohlin} that the
  signature of the intersection form $\Omega$ of $\partial M$ is zero. A
  fortiori, $\Omega$ must be indefinite. By characterization of indefinite
  forms, this leaves two options. The first one is that $\Omega$ is odd and
  $p=q = k$ (using $p$ and $q$ as in characterization earlier). The second
  option is that $\Omega$ is even and $r=0$, $s = k$ (again using $r$ and $s$
  as in characterization earlier). The odd case can be realized by $\#_k(\CP^2
  \# -\CP^2)$ (see~\cite{kirby89}) while the even case can be realized by $\#_k(S^2
  \times S^2)$ (see again~\cite{kirby89}). By Theorem~\ref{t:freedman} we deduce
  that $\partial M$ is homeomorphic either to $\#_k(\CP^2
  \# -\CP^2)$ or $\#_k(S^2 \times S^2)$. However, the case $\#_k(\CP^2
  \# -\CP^2)$ can be excluded due to Proposition~\ref{p:M}(i) and
  Theorem~\ref{t:non_embed}.
\end{proof}

\section{Construction of an Adian-Rabin set}
\label{s:ar_sequence}

The aim of this section and next one is to provide a proof of
Theorem~\ref{t:def9} (modulo some postponed technical results). We will
simultaneously explain an earlier approach how to get an Adian-Rabin set with
bounded number of relations as well as we will introduce new tricks for getting
deficiency $9$. 

\paragraph{Word problem for semigroups.}
The starting point is a result of Matiyasevich~\cite{matiyasevich67} who
constructed a semigroup with $2$ letters and $3$ relations for which the word
problem is undecidable. Equivalently, Matiyasevich constructed so called Thue
system on two letters and with three rewriting rules for which there is a word
$\PP$ and it is undecidable whether a given word $Q$ is equivalent to $\PP$ in this
Thue system. We will briefly describe this Thue system, which will be important
for our next considerations. (Unfortunately, we do not have an access to the
original Matiyasevich~\cite{matiyasevich67} paper (in Russian) but this Thue
system is briefly described in~\cite[Eq. (6)]{matiyasevich95}.)
On alphabet $\{s_1, s_2\}$ the three rewriting rules are the following.
\begin{align*}
  s_1s_1s_2s_1s_2 &\longleftrightarrow s_2s_1s_1 \\
  s_1s_1s_2s_2 &\longleftrightarrow s_2s_1s_1 \\
   L &\longleftrightarrow M\\
\end{align*}
Both $L$ and $M$ above are specific words on alphabet $\{s_1, s_2\}$.
We do not need to know them explicitly, we only point out that $L$ has 304
letters while $M$ has 608 letters. (Two words in a Thue system are equivalent
if one can be obtained from another by a finite number of applications of
rewriting rules applied on subwords formed by consecutive letters.)

\paragraph{Word problem for groups.}
Next step is a construction by Borisov~\cite{borisov69} who provided a group presentation with 5
generators and 12 relations for which the word problem is unsolvable. That is,
there is a word $W$ and it is undecidable whether a given word $U$ is equal to
$W$ in this presentation. However, for groups, this is already equivalent to
asking whether $WU^{-1}$ is trivial. Therefore, it is undecidable whether a
given word is trivial in this presentation. Borisov's presentation is based on
the Thue system above. We will need to describe Borisov's
presentation quite in detail. However, it is more convenient to postpone the
details while explaining the next step first. Then it will be more transparent
what are the desired modifications of Borisov's presentation in order to
achieve an improvement.

\paragraph{From undecidable word problem towards an Adian-Rabin set.} Now let us assume that
$\PP = \langle x_1, \dots, x_n|r_1, \dots, r_m\rangle$ is a group presentation
for which the word problem is undecidable. Gordon~\cite{gordon95} provided a
method how to build an Adian-Rabin set with $m+2$ relators and two
generators. This approach has been also slightly modified by
Miller~\cite{miller92}. We will be using Miller's approach because we find it
easier to access; however, Gordon's earlier\footnote{Gordon's approach is
earlier because it is based on an unpublished manuscript from 1980.} approach
could be used as well. In a recent work~\cite{gordon21}, for a specific case of
Borisov's (set of) presentations, after a slight modification, Gordon shows how the number of
relations can be dropped from 14 to 13. We will formulate this more generally
using one more parameter of a presentation.

\begin{theorem}
\label{t:ar_sequence}
Assume that $\PP = \langle x_1, \dots, x_n|r_1, \dots, r_m\rangle$ is a group
  presentation for which the word problem is undecidable. Assume that we can
  find a set of other relations $\{r'_1, \dots, r'_{m'}\}$ such that
  $\langle x_1, \dots, x_n|r_1, \dots, r_m, r'_1, \dots, r'_{m'}\rangle$ is
  trivial. Then there is an Adian-Rabin set where each presentation has
  deficiency $m + m' - n$.
\end{theorem}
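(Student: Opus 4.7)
The plan is to build, for each word $w$ in the generators of $\PP$, a presentation $\PP_w$ of deficiency $m+m'-n$ whose group is trivial if and only if $w$ represents the identity in $G_\PP$. Because the word problem in $\PP$ is undecidable by hypothesis and the family $\{\PP_w\}_w$ is obviously recursively enumerable (indexed by words), this will yield an Adian--Rabin set of the required deficiency.

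I would adapt the Gordon--Miller Rabin-style scheme for constructing $\PP_w$: introduce a constant number $k$ of new generators $g_1,\dots,g_k$ together with $k$ trigger relations $\tau_1,\dots,\tau_k$ depending on $w$, and include the relations $r_1,\dots,r_m$ together with twisted versions $\tilde r'_1,\dots,\tilde r'_{m'}$ of the trivializing relations. The envisaged form is
\[
  \PP_w = \langle x_1,\dots,x_n,g_1,\dots,g_k \mid r_1,\dots,r_m,\; \tilde r'_1,\dots,\tilde r'_{m'},\; \tau_1(w,g_\bullet),\dots,\tau_k(w,g_\bullet)\rangle,
\]
which has $n+k$ generators and $m+m'+k$ relations, hence deficiency $m+m'-n$ as required. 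Each $\tilde r'_j$ is to be obtained from $r'_j$ by premultiplying or conjugating by a word in the $g_i$ chosen so that the trigger forces it to collapse to $r'_j$ exactly when $w$ trivializes in $G_\PP$.

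The verification splits into two implications. The easy one is the ``only if'' direction: when $w=1$ in $G_\PP$, the trigger relations collapse the $g_i$ to the identity (or to a canonical form), the twisted $\tilde r'_j$ reduce back to $r'_j$, and $\PP_w$ becomes the trivializing presentation $\langle X\mid R,R'\rangle$, which is trivial by hypothesis. The harder ``if'' direction requires producing a non-trivial quotient of $\PP_w$ whenever $w\ne 1$ in $G_\PP$; the standard route is to map $\PP_w$ into a suitable HNN extension or amalgamated free product built over $G_\PP$ and to apply Britton's lemma to check that every relation of $\PP_w$, the twisted $\tilde r'_j$ as well as the triggers $\tau_i$, can be simultaneously satisfied in that target group without forcing $w=1$.

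The main technical obstacle is coordinating the twisted trivializing relations $\tilde r'_j$ with the Rabin trigger: the twist must be strong enough so that, when $w=1$, the $\tilde r'_j$ collapse to $r'_j$ and shut the group down, yet weak enough so that, when $w\ne 1$, the $\tilde r'_j$ remain inert enough to survive in an HNN extension of $G_\PP$. Achieving both simultaneously is precisely what makes the Miller construction in \cite{miller92} subtle, and in the plan above I would rely on the HNN/Britton-style machinery from that paper, merely re-routing the role played there by a generic Rabin trivialization gadget through the given relations $R'$ instead.
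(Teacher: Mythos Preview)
Your plan is on the right track and matches the paper's approach: adapt the Gordon--Miller scheme so that the Rabin gadget routes through the given trivializing relations $r'_1,\dots,r'_{m'}$ rather than through one relation per generator $x_i$. What you leave as a black box the paper makes explicit with $k=3$ new generators $\alpha,\beta,\gamma$. Two of the three trigger relations are Miller's, the third is $\alpha^{-3}w\alpha^{3}=\gamma^{-3}\beta\gamma^{3}$ (with $w$ replacing Miller's $[w,\beta]$), and the twist of each trivializing relation is $\tilde r'_i\colon\ \alpha^{-(3+i)}r'_i\beta\,\alpha^{3+i}=\gamma^{-(3+i)}\beta\gamma^{3+i}$. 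The collapse when $w=1$ is then the chain $\beta\mapsto 1$, $\gamma\mapsto 1$, $\alpha\mapsto 1$, hence $r'_i\mapsto 1$, exactly as you anticipate.

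For the harder direction the paper's route is shorter than the HNN/Britton machinery you sketch: one does not map $\PP_w$ into an auxiliary target but recognizes that $\PP_w$ \emph{is} an amalgamated free product $G^{\mathrm{LHS}}*_F G^{\mathrm{RHS}}$, with $G^{\mathrm{RHS}}=\langle\beta,\gamma\rangle$ free, $G^{\mathrm{LHS}}=G_\PP*\langle\alpha,\beta\rangle$, and $F$ free on $\beta$ together with the $3+m'$ right- (resp.\ left-) hand sides of the displayed relations. The only nontrivial check is that the left-hand sides freely generate a subgroup of $G^{\mathrm{LHS}}$; this is where $w\ne 1$ in $G_\PP$ enters, and also where one sees why the twist must be $r'_i\beta$ rather than bare $r'_i$ (since some $r'_i$ may already vanish in $G_\PP$, the extra $\beta$ is what guarantees freeness). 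Then $G_\PP$ embeds in $\PP_w$, giving nontriviality. So your outline is sound; what was missing was the concrete choice of relations, and the verification turns out to be a one-paragraph amalgamation argument rather than a Britton computation.
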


\begin{remark}
  In the statement of Theorem~\ref{t:ar_sequence} we care only about the
  deficiency. From the proof we will easily see that the number of
  generators in each presentation is $n+3$ and the number of relations in each
  presentation is $m + m' + 3$. However, this can be even dropped to $2$ generators and
  $m + m' + 2 - n$ relations; see~\cite{miller92,chernavsky-leksine06}.
\end{remark}

Before we start a proof, we just briefly describe Miller's
approach~\cite{miller92} for comparison: For each word $w$ over $x_1, \dots, x_n$ and their
inverses, consider the presentation where the generators are $x_1,
\dots, x_n$ and three additional letters $\alpha, \beta, \gamma$. The relations
are the original relations in $\PP$ and in addition the following relations
(the last one appears for every generator $x_i$):

\begin{align*}
  \alpha^{-1} \beta \alpha &= \gamma^{-1}\beta^{-1}\gamma\beta\gamma \\
  \alpha^{-2}\beta^{-1}\alpha\beta\alpha^{2} &= 
  \gamma^{-2}\beta^{-1}\gamma\beta\gamma^{2} \\
  \alpha^{-3}[w,\beta]\alpha^{3} &= \gamma^{-3}\beta\gamma^3 \\
  \alpha^{-3+i}x_i\beta\alpha^{3+i} &= \gamma^{-(3+i)}\beta\gamma^{3+i} \\
\end{align*}
where $[w,\beta]$ stands for the commutator $w^{-1}\beta^{-1}w\beta$.

It is easy to check that if $w$ is trivial in $\PP$, then the aforementioned
presentation is trivial. It is not much harder to check that if $w$ is
nontrivial in $\PP$, then actually the aforementioned presentation is
nontrivial. (We will essentially see both steps in the proof of
Theorem~\ref{t:ar_sequence}.) Then the proof contains one more trick how to
express everything in two generators but we do not need this trick (though it
would also apply in our setting).

Now we provide a full proof of Theorem~\ref{t:ar_sequence}. The proof very
closely follows~\cite{miller92} or~\cite{gordon95}.

\begin{proof}[Proof of Theorem~\ref{t:ar_sequence}]
We order the words with the letters $x_1, \dots, x_n$ and their inverses into a
sequence. For each such word we create a presentation $\PP_w$ with the required
deficiency such that $\PP_w$ is trivial if and only if $w$ is trivial. This way
we get the required Adian-Rabin set. 

The generators of $\PP_w$ are $x_1, \dots, x_n, \alpha, \beta, \gamma$. The
  relations are
\begin{align}
  \alpha^{-1} \beta \alpha &= \gamma^{-1}\beta^{-1}\gamma\beta\gamma
  \label{eq:miller_1}\\
  \alpha^{-2}\beta^{-1}\alpha\beta\alpha^{2} &= 
  \gamma^{-2}\beta^{-1}\gamma\beta\gamma^{2} \\
  \alpha^{-3}w\alpha^{3} &= \gamma^{-3}\beta\gamma^3 \\
  \alpha^{-3+i}r'_i\beta\alpha^{3+i} &= \gamma^{-(3+i)}\beta\gamma^{3+i}
  \label{eq:miller_4}
\end{align}
  and the relations $r_1, \dots, r_m$ of $\PP$. (Recall that $r'_i$ are
  the additional relations from the statement of the theorem.)
  
  The change (when compared with Miller) in the third relation is only cosmetic. The changes in the last
  relations are important (and we have one relation for every $r'_i$). Each
  $\PP_w$ has $n+3$ generators and $m + m' + 3$ relations, thereby it has the
  required deficiency.  

First, we show that if $w$ is trivial, then $\PP_w$ presents the trivial group.
Indeed, if $w$ is trivial, then we deduce that $\beta$ is trivial from the
  third relation. Consequently, we deduce that $\gamma$ is trivial from the
  first relation and that $\alpha$ is trivial from the second. Then we deduce
  that each $r'_i$ is trivial from the last set of relations. This means that
  $\PP_w$ is trivial due to our assumption. 

Now we assume that $w$ is nontrivial and our aim is to show that $\PP_w$
  presents a nontrivial group. First we consider a free group $G^{\RHS}$
  presented as $\langle \beta,
  \gamma| \rangle$ with generators $\beta$ and $\gamma$. Then
  $\beta$ together with the right hand sides
  of~\eqref{eq:miller_1}--\eqref{eq:miller_4} form free generators of a
  subgroup of $G^{\RHS}$. Indeed, an arbitrary product of powers of these elements or
  inverses is nontrivial in $G^{\RHS}$ as the middle portions of the word on
  the right hand side of~\eqref{eq:miller_1}--\eqref{eq:miller_4} will
  persist.\footnote{Of course, we consider only nontrivial products in reduced
  forms. That is, no $g^{i}$ and $g^{-j}$ appear next to each other if $i, j
  \in \{1, 2, \dots \}$ and $g$ is either $\beta$ or a word on the right hand
  side of~\eqref{eq:miller_1}--\eqref{eq:miller_4}.} Similarly, we consider a
  group $G^{\LHS}$ which is the free product $\PP * \langle \alpha, \beta
  \rangle$ (identifying a group presentation with the group it presents). Then
  $\beta$ together with the left hand sides
  of~\eqref{eq:miller_1}--\eqref{eq:miller_4} form free generators of a
  subgroup of $G^{\LHS}$ by the same argument as above, assuming that $w$ is
  nontrivial. This means that $\PP_w$ is a presentation of the free product
  $G^{\LHS} * G^{\RHS}$ with amalgamation where the amalgamation is given by
  identifying the symbols $\beta$ and using the
  relations~\eqref{eq:miller_1}--\eqref{eq:miller_4}. This means that
  $G^{\LHS}$ and consequently $\PP$ is a subgroup of $\PP_w$ (again identifying
  groups and presentations). Therefore $\PP_w$ is nontrivial.
\end{proof}

\section{Modifications in Borisov's approach}
\label{s:modify_borisov}

Now we need to describe Borisov's presentation quite in
detail. 
In order to get a
presentation with 5 generators and 12 relations with undecidable word problem,
Borisov proceeds in two steps. First, he provides a presentation with 7
generators and 14 relations. Then by a trick, he shows how to reduce this to
the desired numbers. Our target is to provide such modifications to Borisov's
approach so that we do not affect the number of relations but we make the value
$m' - n$ in Theorem~\ref{t:ar_sequence} as small as possible which yields an
Adian-Rabin set with small deficiency.

\paragraph{The first step in Borisov's approach:} 
The first step of Borisov's approach starts with a Thue system for which the word problem is undecidable. (Of
course, we intend to use the Thue system of Matiyasevich described above. Let
us, however, proceed more generally for the moment.) Because we will at some
point closely follow Borisov's proof pointing out necessary changes, we try to
keep the notation as close as Borisov's original
notation.\footnote{Unfortunately, the are certain inconsistencies
regarding using capital letters, Greek letters, etc. in this notation.}

Let $\{s_1, \dots, s_N\}$
be the alphabet of our Thue system and let $F_i \longleftrightarrow E_i$ be the
rewriting rules for $i \in [M]$. (Here $[M]$ denotes the integer interval $\{1,
\dots, M\}$.) Let $P$ be the word of the Thue system, for which it is
undecidable whether a given word $Q$ is equivalent with $P$. 

Borisov's presentation will be on alphabet $c, d, e, k, s_1, \dots, s_N, t$. 
We will also use auxiliary integers $\alpha$ larger than $M$. 

Now the relations in Borisov's presentation, which we denote $\BB$, are the following. 
\begin{align}
  d^{\alpha} s_\beta &= s_\beta d & &\beta \in [N] \tag{B1} \label{e:b1} \\
  es_{\beta} &= s_{\beta}e^{\alpha} & &\beta \in [N] \tag{B2} \label{e:b2} \\
  s_{\beta}c &= cs_{\beta} & &\beta \in [N] \tag{B3} \label{e:b3} \\
  d^iF_ie^ic &= cd^iE_ie^i & &i \in [M] \tag{B4} \label{e:b4} \\
  ct &= tc & \tag{B5} \label{e:b5} \\
  dt &= td & \tag{B6} \label{e:b6} \\
  ck &= kc & \tag{B7} \label{e:b7} \\
  ek &= ke & \tag{B8} \label{e:b8} \\
  P^{-1}tPk &= kP^{-1}tP &
  \label{e:b9}
  \tag{B9}
\end{align}

Borisov~\cite[Thm.~1]{borisov69} proved the following:

\begin{theorem}[Borisov]
\label{t:borisov}
  Let $Q$ be a word on the alphabet $\{s_1, \dots, s_N\}$ (inverses are not
  allowed). Then $Q$ is equivalent to $P$ in the Thue system underlying this
  presentation if and only if $Q^{-1}tQk =
  kQ^{-1}tQ$ in $\BB$.
\end{theorem}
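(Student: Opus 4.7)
\medskip
\noindent\textbf{Plan.} I would prove the two directions of the biconditional separately, expecting the ``only if'' direction to be fairly mechanical and the ``if'' direction to be the real work.

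For the forward direction (Thue equivalence implies the group equation), the natural strategy is induction on the length of a chain of rewritings from $Q$ to $P$. The base case $Q = P$ is precisely relation~\eqref{e:b9}. For the inductive step, assume $Q' = A E_i B$ is obtained from $Q = A F_i B$ by a single application of the rewriting rule $F_i \leftrightarrow E_i$ (with $A, B$ words in the $s_\beta$), and that $(Q')^{-1} t Q' k = k (Q')^{-1} t Q'$. Rearranging~\eqref{e:b4} gives $F_i = d^{-i} c d^i E_i e^i c^{-1} e^{-i}$ in $\BB$. Substituting into $Q^{-1} t Q$ and using~\eqref{e:b1},~\eqref{e:b2},~\eqref{e:b3} to shuffle the powers of $d$, $e$, and $c$ through the subwords $A$ and $B$ (at the cost of renormalising exponents of $d$ and $e$, which is why $\alpha$ is taken strictly larger than $M$), I would try to exhibit $Q^{-1} t Q = g (Q')^{-1} t Q' g^{-1}$ for some $g$ lying in the subgroup generated by $c$ and~$e$. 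Since $k$ commutes with both $c$ and $e$ by~\eqref{e:b7} and~\eqref{e:b8}, conjugation by such a $g$ preserves commutation with $k$, completing the step.

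For the backward direction (the group equation implies $Q \sim P$), which I expect to be the main obstacle, the approach is to view $\BB$ as an iterated extension and apply normal-form techniques. Concretely, I would build the filtration $H_0 \le H_1 \le H_2 \le H_3 = \BB$ where $H_0$ is presented by generators $d, e, s_1, \dots, s_N$ and relations~\eqref{e:b1},~\eqref{e:b2}; $H_1$ adjoins $c$ with relations~\eqref{e:b3},~\eqref{e:b4}; $H_2$ adjoins $t$ with~\eqref{e:b5},~\eqref{e:b6}; and $H_3$ adjoins $k$ with~\eqref{e:b7},~\eqref{e:b8},~\eqref{e:b9}. Each extension should be realisable as an HNN extension (or an amalgamation), so that Britton's lemma (respectively the normal-form theorem for amalgamated products) applies. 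The equation $Q^{-1} t Q k = k Q^{-1} t Q$, read in $H_3$, says that the stable letter $k$ conjugates an element of $H_2$ into itself, which via Britton's lemma forces $Q^{-1} t Q$ to lie in the associated subgroup determined by~\eqref{e:b9}, namely a subgroup containing $P^{-1} t P$. Descending one level to $H_2$ and applying the analogous analysis for the stable letter $t$, the condition propagates to an equation in $H_1$ that forces $Q$ and $P$ to be related by a product of elements governed by~\eqref{e:b4}. Finally, descending to $H_0$ and reading off the resulting identity on the $s_\beta$ letters yields precisely the chain of rewritings witnessing $Q \sim P$.

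The hard part will be verifying, at each level of the filtration, that the candidate extension really is an HNN extension---that is, that the two subgroups being identified by the stable letter embed into the previous level and are isomorphic via the prescribed map. This is where Borisov's argument becomes technical: one must show that the natural homomorphism from the intended base into $H_i$ is injective, typically by constructing a retraction or by exhibiting an explicit normal form. I would hope to import these verifications essentially verbatim from Borisov's original proof of his Theorem~1, which is what the paper itself asserts is possible (see the discussion preceding Theorem~\ref{t:B'}); the role of~\eqref{e:b9} in pinning down exactly the Thue-equivalence class of $P$ (rather than some larger class) is what makes the equivalence tight in both directions.
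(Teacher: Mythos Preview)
The paper does not itself prove Theorem~\ref{t:borisov}; it is quoted from Borisov~\cite{borisov69}. What the paper proves is the ``only if'' direction of the generalisation Theorem~\ref{t:B'}, and in Appendix~\ref{a:borisov_modified} it records the minor edits to Borisov's original argument needed for the ``if'' direction of~\ref{t:B'}. Setting $g=g'=h=h'=\mathrm{id}$ in that material recovers the present statement, so that is the relevant comparison.

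Your forward direction matches the paper's proof of the ``only if'' part of Theorem~\ref{t:B'} essentially line for line: induction on a single rewriting step, the base case being~\eqref{e:b9}, and the inductive step carried out by pushing powers of $d$, $e$, $c$ through $A$ and $B$ via~\eqref{e:b1}--\eqref{e:b3} and then invoking~\eqref{e:b4}, with the conjugator lying in $\langle c,e\rangle$ so that~\eqref{e:b7},~\eqref{e:b8} finish the job.

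For the backward direction your HNN/Britton filtration is a correct and standard modern repackaging, but it is not how Borisov argues. From the appendix one sees that Borisov works directly: he first proves (assertions~IV, V) that the families $\{s_\beta\}\cup\{d^iF_ie^i\}$ and $\{s_\beta\}\cup\{d^iE_ie^i\}$ each freely generate subgroups of the group on $d,e,s_\beta$ with relations~\eqref{e:b1},~\eqref{e:b2}---this is precisely the verification you flag as ``the hard part'' of checking the HNN structure at your level $H_1$---and then proves a key Lemma~4 by induction on the number of positive occurrences of $c$ in a word $P^{-1}LQR$, peeling off one application of~\eqref{e:b4} at a time. Your Britton-lemma descent through $H_3\supset H_2\supset H_1$ would reproduce the same information (the free-generation statements are exactly the associated-subgroup isomorphisms you need), so the two approaches are equivalent in content; Borisov's is more hands-on combinatorial, yours more structural. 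Either way, you correctly identify that the substance lives in those subgroup verifications, which neither you nor the paper reproduces in full.
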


Using Theorem~\ref{t:borisov}, we see that it is undecidable whether
$Q^{-1}tQk(kQ^{-1}tQ)^{-1}$ is a
trivial word in $\BB$.

\paragraph{Our modification:} We will modify the presentation $\BB$ to a
presentation $\BB'$ in the following way. We consider four injective 
functions $g,h,g',h' \colon [M] \to \{1, 2, \dots \}$. We will assume that
$\alpha$ is larger than any value attained by any of these functions. 
We
replace the relation~\eqref{e:b4} with the following relations.

\begin{align}
  d^{g(i)}F_ie^{h(i)}c &= cd^{g'(i)}E_ie^{h'(i)} & &i \in [M] \tag{B4'} \label{e:b4'}
\end{align}

Other relations are unchanged. The underlying motivation for this modification
is that it will be able to get smaller value of $m'$ in
Theorem~\ref{t:ar_sequence} for this presentation. This improves the
deficiency. We claim that Theorem~\ref{t:borisov} persists with the modification we
made.

\begin{theorem}
\label{t:B'}
  Let $Q$ be a word on the alphabet $\{s_1, \dots, s_N\}$ (inverses are not
  allowed). Then $Q$ is equivalent to $P$ in the Thue system underlying this
  presentation if and only if $Q^{-1}tQk =
  kQ^{-1}tQ$ in $\BB'$.
\end{theorem}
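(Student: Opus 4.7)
The plan is to follow Borisov's original proof of Theorem~\ref{t:borisov} from~\cite{borisov69} as closely as possible, checking step by step that the replacement of~\eqref{e:b4} by~\eqref{e:b4'} does not break any of the arguments. The only substantive change is that each occurrence of the exponent $i$ attached to $d$ or $e$ in relation~\eqref{e:b4} is now replaced by one of the four values $g(i), g'(i), h(i), h'(i)$. Since these values are all strictly less than $\alpha$ and the functions $g, g', h, h'$ are injective, the crucial bookkeeping properties used in Borisov's argument can still be maintained.

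For the ``if'' direction (equivalence of $Q$ and $P$ in the Thue system implies $Q^{-1}tQk = kQ^{-1}tQ$ in $\BB'$), I would first produce, for each Thue rewriting step $F_i \leftrightarrow E_i$, the group-theoretic identity that says a suitable conjugate of $c$ by a word in $d, e$ commutes past the $F_i$ or $E_i$ factor. Using~\eqref{e:b1}--\eqref{e:b3}, one freely moves the powers of $d$ and $e$ past letters $s_\beta$ (at the cost of multiplying the exponent by a power of $\alpha$), and then invokes~\eqref{e:b4'} in place of~\eqref{e:b4}. The resulting identities are formally different from Borisov's but have the same structural role, so composing them along a Thue derivation from $Q$ to $P$ and using~\eqref{e:b5}--\eqref{e:b9} exactly as in Borisov gives the desired commutation with $k$.

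For the ``only if'' direction, which is the main obstacle, I would reread Borisov's proof of Theorem~1 in~\cite{borisov69} line by line and verify that every place where he exploits the exponent $i$ to identify which rule~\eqref{e:b4} is being applied continues to work after the substitution $i \mapsto (g(i), g'(i), h(i), h'(i))$. The essential point is that the quadruple of exponents occurring in~\eqref{e:b4'} still determines $i$ uniquely (by injectivity of the four functions), so the normal-form / HNN-style cancellation analysis that Borisov performs in his proof of Theorem~\ref{t:borisov}---in particular the classification of how $c$ can be pushed through a word in the $d, e, s_\beta$ alphabet---remains valid with only cosmetic changes in the bookkeeping of the exponents. The bound $\alpha > \max(g, g', h, h')$ plays the same role as $\alpha > M$ in Borisov's argument, ensuring that conjugation by $d^\alpha$ or $e^\alpha$ using~\eqref{e:b1}--\eqref{e:b2} cannot interact destructively with the powers appearing in~\eqref{e:b4'}.

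The hard part will be checking that no step of Borisov's argument secretly relies on the specific equality of all four exponents being $i$ (as opposed to merely being injective functions of $i$ with a common strict upper bound $\alpha$). Because this is purely a verification of an existing (highly technical) proof with a systematic substitution, I expect it to be long but conceptually routine; accordingly I would defer the detailed verification to an appendix (as indicated by the dependency diagram pointing to Appendix~\ref{a:borisov_modified}) and in the main text merely record the statement and the modifications above.
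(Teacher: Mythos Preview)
Your proposal is correct and follows essentially the same approach as the paper: the easier direction is proved directly from the relations~\eqref{e:b1}--\eqref{e:b9} (with~\eqref{e:b4'} in place of~\eqref{e:b4}), while the harder direction is obtained by re-reading Borisov's proof of Theorem~1 in~\cite{borisov69} and verifying that each step survives the substitution $i \mapsto (g(i),g'(i),h(i),h'(i))$, using injectivity of $g,g',h,h'$ and the bound $\alpha > \max(g,g',h,h')$ exactly as you describe.

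One terminological slip to fix: you have the labels ``if'' and ``only if'' reversed. In the biconditional ``$Q$ is Thue-equivalent to $P$ if and only if $Q^{-1}tQk = kQ^{-1}tQ$ in $\BB'$'', the \emph{only if} part is ``Thue-equivalence $\Rightarrow$ group identity'' (the easy direction, done in the main text of the paper), and the \emph{if} part is ``group identity $\Rightarrow$ Thue-equivalence'' (the hard direction, deferred to Appendix~\ref{a:borisov_modified}). Your identification of which direction is easy and which is hard is correct; only the names are swapped.
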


Here we prove the `only if' part of the statement which is the easier part. The
`if' part can be proved in exactly same way as in Borisov's
paper~\cite{borisov69} replacing each occurrence of $d^i F_i e^i$ with
$d^{g(i)} F_i e^{h(i)}$ and of $d^i E_i e^i$ with $d^{g'(i)} E_i e^{h'(i)}$.
In appendix~\ref{a:borisov_modified} we explain in detail the changes in
Borisov's proof so that it works in our setting as well.

\begin{proof}[Proof of `only if' part]
%
%
%
  
  Let us assume that $P$ and $Q$ are equivalent in the Thue system. We want to
  show that $Q^{-1}tQk = kQ^{-1}tQ$ in $\BB'$.
  It is sufficient to prove it in
  the case when $Q$ is obtained from $P$ by a single application of a rewriting
  rule in the Thue system and then to proceed by induction. Let us assume that
  $P = RF_iS$ and $Q = RE_iS$ for some words $R, S$ on the alphabet $\{s_1,
  \dots, s_N\}$ (without inverses) and $i \in [M]$. (The other case $P = RE_iS$
  and $Q = RF_iS$ is analogous.)

  Let $\dd := d^{g(i)\alpha^{|R|}}$, $\dd' := d^{g'(i)\alpha^{|R|}}$, $\ee :=
  e^{h(i)\alpha^{|S|}}$ and $\ee' := e^{h'(i)\alpha^{|S|}}$ where $|W|$ stands for
  the length of a word $W$. First we observe that the following equality holds
  in $\BB'$:

  \begin{equation}
    \label{e:dPec}
    c\dd' Q \ee' = \dd P \ee c
  \end{equation}
Indeed
 \begin{align*}
   c\dd' Q \ee' &= c\dd' RE_iS \ee' \stackrel{(\ref{e:b1},\ref{e:b2})}{=}
   cRd^{g'(i)} E_i  e^{h'(i)} S  \stackrel{\eqref{e:b3}}{=} Rcd^{g'(i)} E_i
   e^{h'(i)} S
   \stackrel{\eqref{e:b4'}}{=} Rd^{g(i)} F_i e^{h(i)} cS \\
   &\stackrel{\eqref{e:b3}}{=} Rd^{g(i)} F_i e^{h(i)} Sc 
   \stackrel{(\ref{e:b1},\ref{e:b2})}{=} \dd RF_iS \ee c = \dd P \ee c. 
 \end{align*}
Now,

  \begin{tabular}{rcl}
    $Q^{-1}tQkQ^{-1}t^{-1}Qk^{-1}$
    &
    $\stackrel{(\ref{e:b5},\ref{e:b6},\ref{e:b8})}{=}$ &
    $\ee' \ee'^{-1} Q^{-1}   \dd'^{-1} c^{-1} t c
    \dd' Q\ee' k \ee'^{-1} Q^{-1} \dd'^{-1} c^{-1} t^{-1} c \dd' Q \ee' k^{-1}
    \ee'^{-1}$ \\
    &
    $\stackrel{\eqref{e:dPec}}{=}$ & $\ee' c^{-1} \ee^{-1} P^{-1} \dd^{-1} t \dd P
    \ee c k c^{-1} \ee^{-1} P^{-1} \dd^{-1} t^{-1} \dd P \ee c k^{-1}
    \ee'^{-1}$
    \\
    & $\stackrel{(\ref{e:b6},\ref{e:b7},\ref{e:b8})}{=}$ & $\ee' c^{-1} \ee^{-1}
    P^{-1} t P k P^{-1} t^{-1} P k^{-1} \ee c \ee'^{-1}$ \\
    & $\stackrel{\eqref{e:b9}}{=}$ & 1.
  \end{tabular}
\end{proof}

\paragraph{The second step in Borisov's approach:} In order to reduce the
number of relations in $\BB$, Borisov adds two new letters $a$ and $b$ while he
encodes some other letters, namely $s_1, \dots, s_N$, $k$ and $t$ as (suitably
complicated) words in $a$ and $b$. In more detail, if $r$ is the $i$th letter
among $s_1, \dots, s_N$, $k$ and $t$, then this letter is encoded as 
$\mu(r) = a^{-1}b^{-1}ab^{-i}ab^{-1}a^{-1}b^ia^{-1}bab^{-i}aba^{-1}b^i$. Then
the presentation is modified so that each relation is replaced with an encoded
version, except the commutators containing $c$ and one of the encoded letters.
They are replaced with $ac = ca$ and $bc = cb$; see the list of relations
below.

\begin{align*}
  d^{\alpha} \mu(s_\beta) &= \mu(s_\beta) d & &\beta \in [N] \tag{BE1}
  \label{e:be1} \\
  e\mu(s_{\beta}) &= \mu(s_{\beta})e^{\alpha} & &\beta \in [N] \tag{BE2}
  \label{e:be2} \\
  d^i\mu(F_i)e^ic &= cd^i\mu(E_i)e^i & &i \in [M] \tag{BE4} \label{e:be4} \\
  d\mu(t) &= \mu(t)d & \tag{BE6} \label{e:be6} \\
  e\mu(k) &= \mu(k)e & \tag{BE8} \label{e:be8} \\
  \mu(P^{-1})\mu(t)\mu(P)\mu(k) &= \mu(k)\mu(P^{-1})\mu(t)\mu(P) &
  \label{e:be9}  \tag{BE9} \\
  ac &= ca & \tag{BE10} \label{e:be10} \\
  bc &= cb & \tag{BE11} \label{e:be11}
\end{align*}

It follows from~\cite[Theorem~2]{borisov69} that a word
$\mu(Q^{-1})\mu(t)\mu(Q)\mu(k)(\mu(k)\mu(Q^{-1})\mu(t)\mu(Q))^{-1}$ is trivial
if and only if $Q$ can be obtained from $P$ in the underlying Thue system.
(Intuitively, $\mu(s_\beta)$, $\mu(k)$ and $\mu(t)$ behave as $s_\beta$, $k$
and $t$ in $\BB$ except that~\eqref{e:be10} and~\eqref{e:be11} together ensure
that these encoded letters commute with $c$.)

We intend to use Theorem~\ref{t:ar_sequence}. For this reason we would like to
get a presentation which becomes trivial after adding a small number of
relations. Unfortunately, Borisov's encoding is not good for this purpose as
the sum of the exponents of $a$ and $b$ in encoded letters equals $0$. On the
other hand, Borisov's proof heavily depends on this particular encoding.

\paragraph{Our modification:} We will modify the encoding so that we make the
occurrences of $a$ and $b$ in the encoded letters slightly imbalanced. The
cost is that we have to provide a new proof that the encoding has desired
properties. Here we use ideas from small cancellation theory. 

Now we describe our final presentation. This time, we will already use the Thue
system of Matiyasevich~\cite{matiyasevich67}, described at the beginning of
Section~\ref{s:ar_sequence}. It has two letters $s_1, s_2$ and three rewriting
rules $F_1 \leftrightarrow
E_1$, $F_2 \leftrightarrow E_2$ and $F_3 \leftrightarrow E_3$. We know that
$F_1 = s_1s_1s_2s_1s_2$, $F_2 = s_1s_1s_2s_2$ and $E_1 = E_2 = s_2s_1s_1$. (We
do not need to specify $F_3$ and $E_3$.)

For encoding, we need to encode letters $s_1, s_2, k$ and $t$. We will again
use $\mu$ for this encoding because we will not need Borisov's original
encoding anymore. We set
\begin{align}
  \mu(s_1) &=
  a^{10}b^{10}a^{-11}b^{-11}a^{-12}b^{-12}a^{13}b^{13}a^{14}b^{14}a^{-15}b^{-15}a^{-16}b^{-16}a^{18}b^{18}
  \label{e:enc_s1} \\
  \mu(s_2) &=  
  a^{20}b^{20}a^{-21}b^{-21}a^{-22}b^{-22}a^{23}b^{23}a^{24}b^{24}a^{-25}b^{-25}
  a^{-26}b^{-26}a^{28}b^{28}\\
  \mu(k) &=  
  a^{30}b^{30}a^{-31}b^{-31}a^{-32}b^{-32}a^{33}b^{33}a^{34}b^{34}a^{-35}b^{-35}
  a^{-36}b^{-36}a^{38}b^{38}\\
  \mu(t) &=  
  a^{40}b^{40}a^{-41}b^{-41}a^{-42}b^{-42}a^{43}b^{43}a^{44}b^{44}a^{-45}b^{-45}a^{-46}b^{-46}a^{48}b^{48}.
  \label{e:enc_t}
\end{align}
This choice is made so that it will satisfy so called metric small cancellation
property $C'(1/6)$ which we will explain in Section~\ref{s:new_encoding}. 
For the moment, we do not need this property. We have also set up the exponents in
such a way that the sum of the exponents of each of $a$ and $b$ is $1$.

Our desired presentation is a presentation $\RR$ with letters $a, b, c, d, e$
and the following 12 relations. (Here $\alpha > 5$.)

\begin{align*}
  d^{\alpha} \mu(s_\beta) &= \mu(s_\beta) d & &\beta \in \{1,2\} \tag{R1}
  \label{e:r1} \\
  e\mu(s_{\beta}) &= \mu(s_{\beta})e^{\alpha} & &\beta \in \{1,2\} \tag{R2}
  \label{e:r2} \\
  d\mu(F_1)ec &= cd^2\mu(E_1)e & \tag{R3.1} \label{e:r31} \\
  d^3\mu(F_2)e^3c &= cd^3\mu(E_2)e^3 & \tag{R3.2} \label{e:r32} \\
  d^4\mu(F_3)e^4c &= cd^4\mu(E_3)e^5& \tag{R3.3} \label{e:r33} \\
  d\mu(t) &= \mu(t)d & \tag{R4} \label{e:r4} \\
  e\mu(k) &= \mu(k)e & \tag{R5} \label{e:r5} \\
  \mu(P^{-1})\mu(t)\mu(P)\mu(k) &= \mu(k)\mu(P^{-1})\mu(t)\mu(P) &
  \label{e:r6}  \tag{R6} \\
  ac &= ca & \tag{R7} \label{e:r7} \\
  bc &= cb & \tag{R8} \label{e:r8}
\end{align*}

We need to know that it is easy to trivialize this presentation.

\begin{lemma}
\label{l:trivialize_R}
  Let $\RR'$ be a presentation obtained from $\RR$ by adding relations $a = 1$
  and $c = 1$. Then $\RR'$ presents a trivial group.
\end{lemma}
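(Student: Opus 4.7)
The plan is to pass to the quotient imposing $a = 1$ and $c = 1$, and then kill the remaining generators $b, d, e$ one at a time by exploiting the asymmetries built into \eqref{e:r31}--\eqref{e:r33}. The key preliminary observation is already noted in the paper: the encodings \eqref{e:enc_s1}--\eqref{e:enc_t} are chosen so that for each $r \in \{s_1, s_2, k, t\}$ the exponent sum of $a$ (and of $b$) in $\mu(r)$ equals $1$. Consequently, modulo $a = 1$, each of the four encoded letters reduces to the single letter $b$, and hence for any word $W$ over $\{s_1, s_2\}$ we obtain $\mu(W) = b^{|W|}$ in $\RR'$.

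With this reduction in hand, I would apply the three relations \eqref{e:r31}--\eqref{e:r33} in sequence. First, since $|F_2| = 4$ and $|E_2| = 3$, relation \eqref{e:r32} becomes $d^3 b^4 e^3 = d^3 b^3 e^3$ after imposing $a = c = 1$; cancelling yields $b^4 = b^3$, and hence $b = 1$. Substituting $b = 1$ (together with $a = c = 1$) into \eqref{e:r31} collapses it to $d e = d^2 e$, which gives $d = 1$. Finally, substituting $a = b = c = d = 1$ into \eqref{e:r33} collapses it to $e^4 = e^5$, which gives $e = 1$.

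At this point all five generators are trivial, so $\RR'$ presents the trivial group; the remaining relations \eqref{e:r1}, \eqref{e:r2}, and \eqref{e:r4}--\eqref{e:r8} are then automatically satisfied and need not be invoked. I do not expect any real obstacle here: the proof is a direct verification that the three asymmetries deliberately built into \eqref{e:r31}--\eqref{e:r33}---the distinct powers of $d$ on the two sides of \eqref{e:r31}, the distinct lengths $|F_2| \neq |E_2|$ which propagate to distinct powers of $b$ in \eqref{e:r32}, and the distinct powers of $e$ in \eqref{e:r33}---are exactly what is needed to kill $b$, $d$, and $e$ separately. This is precisely the design goal of the modification \eqref{e:b4'}, combined with the encoding chosen just before the statement (whose nonzero exponent sums prevent $a = 1$ from trivialising each $\mu$ on its own).
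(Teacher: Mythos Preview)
Your proof is correct and follows essentially the same route as the paper: both use \eqref{e:r32} together with $a=c=1$ and the exponent-sum-$1$ property of the encoding to obtain $b^4=b^3$, hence $b=1$, and then read off $d=1$ from \eqref{e:r31} and $e=1$ from \eqref{e:r33}. The only cosmetic difference is that the paper first sets $c=1$ in \eqref{e:r32} to get $\mu(F_2)=\mu(E_2)$ and then sets $a=1$, whereas you impose both at once; the substance is identical.
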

\begin{proof}
  Using $c = 1$, we deduce $\mu(F_2) = \mu(E_2)$ from~\eqref{e:r32}. This means
  \[
	  \mu(s_1) \mu(s_1) \mu(s_2) \mu(s_2) = \mu(s_2)\mu(s_1) \mu(s_1).
  \]
  Using $a = 1$, $\mu(s_1)$ as well as $\mu(s_2)$ simplify to $b$. Thus we get
  $b^4 = b^3$ in $\RR'$; that is, $b = 1$.

  Using $a = b= c = 1$, we get $d = 1$ from~\eqref{e:r31} and $e=1$
  from~\eqref{e:r33}.
\end{proof}

We also need to know an analogy of Borisov's result for this presentation

\begin{theorem}
\label{t:R_word_unsolvable}
  Let $Q$ be an admissible word in alphabet $\{s_1, s_2\}$ (no inverses
  allowed). Then 
  \[\mu(Q^{-1})\mu(t)\mu(Q)\mu(k)(\mu(k)\mu(Q^{-1})\mu(t)\mu(Q))^{-1}\]
  is trivial in $\RR$
  if and only if $Q$ can be obtained from $P$ in the underlying
  (Matiyasevich's) Thue system.
\end{theorem}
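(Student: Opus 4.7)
The idea is to reduce Theorem~\ref{t:R_word_unsolvable} to Theorem~\ref{t:B'}, specialized to Matiyasevich's Thue system with the parameters $g(1)=1$, $g'(1)=2$, $h(1)=h'(1)=1$, $g(2)=g'(2)=h(2)=h'(2)=3$, $g(3)=g'(3)=h(3)=4$, $h'(3)=5$, so that the relations (B4') match (\ref{e:r31})--(\ref{e:r33}) once the encoding $\mu$ is applied to the alphabet $\{s_1,s_2,k,t\}$. Using Theorem~\ref{t:B'}, it then suffices to establish the biconditional
\[
Q^{-1}tQk = kQ^{-1}tQ \text{ in } \BB' \iff \mu(Q^{-1})\mu(t)\mu(Q)\mu(k) = \mu(k)\mu(Q^{-1})\mu(t)\mu(Q) \text{ in } \RR.
\]

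\textbf{The easy direction ($\BB' \Rightarrow \RR$).} I would apply $\mu$ letter-by-letter to a $\BB'$-derivation of the commutator equality. Most $\BB'$-relations, after encoding, become exactly (\ref{e:r1})--(\ref{e:r6}). The only subtlety is that the relations $cs_\beta = s_\beta c$ (\ref{e:b3}) and $ck=kc$ (\ref{e:b7}) in $\BB'$ have no direct analogue in $\RR$; however, since $\mu(s_\beta)$, $\mu(k)$, $\mu(t)$ are words over $\{a,b\}$, each occurrence of such a commutation can be expanded into a sequence of applications of (\ref{e:r7}) and (\ref{e:r8}). Thus every $\BB'$-step lifts to a (possibly longer) sequence of $\RR$-steps.

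\textbf{The hard direction ($\RR \Rightarrow \BB'$).} Assume the equality holds in $\RR$. I would argue through van Kampen diagrams: there is a reduced van Kampen diagram $D$ over $\RR$ whose boundary label is the commutator of $\mu(Q^{-1})\mu(t)\mu(Q)$ with $\mu(k)$. The goal is to ``decode'' $D$ into a van Kampen diagram over $\BB'$ with boundary $[Q^{-1}tQ,k]$, which by Theorem~\ref{t:B'} forces $Q\sim P$. The key observation is that the exponents $10,11,\ldots,48$ in (\ref{e:enc_s1})--(\ref{e:enc_t}) are chosen so that $\mu$ satisfies the metric small cancellation condition $C'(1/6)$; this is presumably the content of Theorem~\ref{t:encoding} in the dependency diagram. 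Under $C'(1/6)$, a Greendlinger-type argument implies that the $\{a,b\}$-portions of $D$ cannot interact in arbitrary ways: any $2$-cell labelled by one of (\ref{e:r1})--(\ref{e:r6}) can be glued to its neighbours only along subwords which are entire occurrences of some $\mu(r)$, $r\in\{s_1,s_2,k,t\}$. Collapsing each such maximal matched $\{a,b\}$-region to a single edge labelled by the underlying letter of $\BB'$ then produces the desired diagram over $\BB'$.

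\textbf{Main obstacle.} The subtle step is justifying the collapsing procedure: one must prove that \emph{every} edge of $D$ labelled by $a$ or $b$ lies inside a unique occurrence of some $\mu(r)$, and that these occurrences are consistently matched across the diagram. This is purely combinatorial but delicate, because the encoded letters appear both in the boundary and inside $2$-cells, and the small cancellation bound must be leveraged to forbid partial overlaps. The author hints that the analysis uses contact representations of disks in the plane---a natural language for tracking how encoded subwords touch each other along the $1$-skeleton of $D$---and I would expect the technical execution of this step to occupy most of Section~\ref{s:new_encoding}.
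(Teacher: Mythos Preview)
Your proposal is correct and follows essentially the same route as the paper: specialize $\BB'$ with exactly those parameter choices, verify that the encoding $\mu$ satisfies $C'(1/6)$, and then invoke the general encoding theorem (Theorem~\ref{t:encoding}) to transfer triviality back and forth between $\BB'$ and $\RR$, after which Theorem~\ref{t:B'} finishes the job. The only hypothesis of Theorem~\ref{t:encoding} you did not mention is the ``$c$-exponent zero'' condition (that killing $s_1,s_2,d,e,k,t$ in $\BB'$ leaves the free group on $c$), but this is a one-line check.
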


The proof of this theorem is postponed to Section~\ref{s:new_encoding}.
However, we are ready to deduce Theorem~\ref{t:def9} from Theorem~\ref{t:R_word_unsolvable}.

\begin{proof}[Proof of~Theorem~\ref{t:def9} modulo Theorem~\ref{t:R_word_unsolvable}]
Because the word problem for Matiyasevich's Thue system is undecidable,
  Theorem~\ref{t:R_word_unsolvable} implies that the word problem is
  undecidable for $\RR$. Then Theorem~\ref{t:def9} is an immediate corollary of
  Theorem~\ref{t:ar_sequence} and Lemma~\ref{l:trivialize_R}.
\end{proof}

\section{Modified Borisov's encoding}
\label{s:new_encoding}
The purpose of this section is to prove
Theorem~\ref{t:R_word_unsolvable}. (In this section, in contrast to our
earlier notations, we will denote words with lower case letters while we keep
the capital letters for other objects.) In the following definitions (small
cancellation condition, van Kampen diagrams, etc.), we in general
follow~\cite{lyndon-schupp77}. However, we adjust the terminology a little bit
so that it follows the current standards in graph theory. (For example, a
\emph{path} in~\cite{lyndon-schupp77} allows repetitions of vertices or edges.
But the standard notion in graph theory is a \emph{walk} for this object.)

\paragraph{Small cancellation condition.}
Let $\PP = \langle X | R\rangle$ be a group presentation. A word is
\emph{reduced} if it does not contain a pair of symbols $x$ and $x^{-1}$ next
to each other and it is \emph{cyclically reduced} if each cyclic permutation of
the word is reduced. We will assume that each word in $R$ is cyclically
reduced. (Cyclic permutations of words as well as reducing pairs $x$ and
$x^{-1}$ yields a presentation of the same group.)
The \emph{symmetric closure} of the set $R$ is the set of relations $R^*$ obtained by taking all
cyclic permutations of elements of $R$ and their inverses. We remark that
$\langle X | R\rangle$ and $\langle X | R^*\rangle$ present the same group.
A word $u$ is called a \emph{piece} (with respect to $R$) if there
are distinct $r_1, r_2 \in R^*$ such that $u$ is a maximal common initial segment of
these two words. In addition, by a \emph{subpiece} we mean a subword of a
piece.

For a positive integer $p$ the set of relations $R$ as above satisfies the \emph{small cancellation
condition} $C(p)$ if for every $r \in R^*$ whenever $r$ can be written as a
concatenation $r = u_1\cdots u_s$
where $u_1, \dots, u_s$ are pieces, then $s \geq p$. We also introduce a
slightly modified condition $C_{\sub}(p)$ if the same condition is true also
for subpieces; that is, whenever $r$ can be written as a
concatenation $r = u_1\cdots u_s$
where $u_1, \dots, u_s$ are subpieces, then $s \geq p$.
For $\lambda \in (0,1)$ the set of relations $R$ satisfies the 
\emph{metric small cancellation condition} $C'(\lambda)$ if for every $r \in
R^*$ whenever $r$ can be written as $ux$ where $u$ is a piece, then $|u| <
\lambda|r|$ where $|\cdot|$ stands for the length of a word. We will work only
with conditions $C_{\sub}(6)$ and $C'(1/6)$ and it is easy to check that $C'(1/6)$
implies $C_{\sub}(6)$ (which implies $C(6)$). (Of course, the definitions above make sense for an
arbitrary set of words instead of $R$ as $R$ is just a set of words.)

Given a cyclically reduced word $w$, we say that $w$ is \emph{aperiodic} if eachnontrivial cyclic permutation of $w$ is different from $w$. In other words, $w$ cannot be written as $u^k$ where $u$ is another reduced word and $k \geq 2$.

The following theorem is analogous to Theorem~2 in~\cite{borisov69} and is the
main technical result for the proof of Theorem~\ref{t:R_word_unsolvable}. We
however point out that our proof is very different from the proof
in~\cite{borisov69}. 

\begin{theorem}
  \label{t:encoding}
  Let 
  \[\PP = \langle x_1, \dots, x_p, c, y_1, \dots, y_q| R, y_1 c = c y_1, \dots,
  y_q c = c y_q \rangle\] 
  be a presentation where $p, q$ are non-negative integers and $R$ is an
  unspecified set of relations. 
 
  Consider an encoding of letters of an alphabet $\Sigma = \{x_1, \dots, x_p, c,
  y_1, \dots, y_q\}$ using words in an alphabet $\Sigma' = \{x_1, \dots, x_p, c,
	a, b\}$ given by $\mu(x_i) := x_i$, $\mu(c) := c$, $\mu(y_j) := w_j$ where each $w_j$  is some aperiodic cyclically reduced word using only $a$ and $b$ (and their inverses). We also extend this
  encoding to the inverses (e.g. $\mu(y_j^{-1}) = w_j^{-1}$) and to the words on
  letters of $\Sigma$ and their inverses (e.g. $\mu(x_iy_j^{-1}) =
  \mu(x_i)\mu(y_j^{-1}) = x_iw_j^{-1}$). Assume that the set of (cyclically
  reduced) words $W := \{w_1, \dots,
  w_q\}$ satisfies the metric small cancellation condition $C'(1/6)$; in
  particular, it also satisfies $C_{\sub}(6)$. Assume also
  that if we add
  the relations $\{x_1, \dots, x_p, y_1, \dots, y_q\}$ to $\PP$, then we obtain a
  free group generated by $c$.
  (In other
  words, in every relator of $R$, the sum of the exponents of $c$ is equal to
  $0$.)

  Consider the presentation
  \[\PP' = \langle x_1, \dots, x_p, c, a, b| \mu(R), ac = ca, bc = cb\rangle,\] 
  where $\mu(R)$ stands for $\{\mu(r) \colon r \in R\}$. Then a word $w$ is
  trivial in $\PP$ if and only if $\mu(w)$ is trivial in $\PP'$. 
\end{theorem}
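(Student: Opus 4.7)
The (only if) direction is the easy one. If $w = 1$ in $\PP$, then $w$ is a product of conjugates in the free group $F(x_1, \dots, x_p, c, y_1, \dots, y_q)$ of elements of $R \cup \{[y_j, c] : 1 \le j \le q\}$. Applying the substitution $\mu$ yields $\mu(w)$ as a product of conjugates of $\mu(R) \cup \{[w_j, c] : 1 \le j \le q\}$ in the free group $F(x_1, \dots, x_p, c, a, b)$. Since each $w_j$ is a word in $a, b$ and $c$ commutes with both $a$ and $b$ in $\PP'$, every commutator $[w_j, c]$ is trivial in $\PP'$, so $\mu(w) = 1$ in $\PP'$.

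The (if) direction is the main content, and I would attack it via van Kampen diagrams. Given $\mu(w) = 1$ in $\PP'$, fix a reduced van Kampen diagram $D$ on a disk with boundary word $\mu(w)$. Its $2$-cells come in two flavours: \emph{big cells}, whose boundary words are $\mu(r)^{\pm 1}$ with $r \in R$, and \emph{commutator cells}, with boundaries $[a, c]^{\pm 1}$ or $[b, c]^{\pm 1}$. On the boundary of each big cell, the $a, b$-letters appear organised into \emph{blocks}, each a full copy of $w_j^{\pm 1}$ (the encoding of some $y_j^{\pm 1}$), separated by single $x_i$- or $c$-letters. The goal is to transform $D$ into a van Kampen diagram over $\PP$ with boundary $w$ by contracting every $w_j$-block to a single $y_j$-edge, and then recognising commutator cells as instances of the $[y_j, c]$-relations of $\PP$.

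For this collapse to be sound, every $w_j$-block on every big-cell boundary must remain \emph{intact}: no block may be severed in its interior by a shared arc with a neighbouring big cell, nor rerouted through a local detour of commutator cells. This is exactly the role of the small cancellation condition $C'(1/6)$ on $W$: the $a, b$-portion of any arc shared between two big cells is a common subword of two distinct cyclic permutations of $W \cup W^{-1}$, hence is shorter than $|w_j|/6$ and cannot bridge two different $w_j$-blocks. The interaction with commutator cells is more delicate, but a chain of commutator cells only records $c$-letters sliding past $a, b$-letters, and can be rearranged so that it leaves each $w_j$-block whole, pushing the $c$-movement out to the block boundaries.

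The main obstacle is performing this normalisation \emph{globally}, so that all blocks in $D$ are simultaneously intact. The $C'(1/6)$ condition controls the pairwise local interaction between two big cells, but organising the commutator regions coherently across the whole diagram is a planar-combinatorial problem. As the introduction indicates, this is where properties of plane graphs and contact representations of disks in the plane enter the argument. Once $D$ has been brought into this block-preserving normal form, contracting each $w_j$-block to a $y_j$-edge and absorbing each commutator cell into a $[y_j, c]$-cell produces a van Kampen diagram over $\PP$ with boundary $w$, which proves $w = 1$ in $\PP$.
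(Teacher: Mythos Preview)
Your strategy is the paper's own, and the overall shape---van Kampen diagram over $\PP'$, distinguishing $R$-cells from commutator cells, tracking how the $w_j$-blocks interact across cells, then collapsing each block to a single $y_j$-edge---is correct. But what you have written is an outline rather than a proof, and two of the sketched steps are not quite right as stated.

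First, your sentence ``the $a,b$-portion of any arc shared between two big cells is a common subword of two distinct cyclic permutations of $W\cup W^{-1}$'' is false without an extra hypothesis: the two cells may read the \emph{same} cyclic conjugate of the same $w_j$ at the same position along that arc, and then the overlap is not a piece at all. This is in fact the desired outcome---the paper calls such incidence pairs \emph{matched}---and the real content of the argument is to show that \emph{every} such correspondence is matched. The $C(6)$ / contact-representation step is applied only to the hypothetically \emph{unmatched} pairs, to derive a contradiction. Moreover, two big cells need not share an arc directly: the relevant correspondence between their $a,b$-edges passes through chains of commutator cells (the paper encodes this with an auxiliary plane graph $G$ and the notion of \emph{twins}), so ``arc shared between two big cells'' is already too restrictive a picture.

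Second, you never invoke the hypothesis that the exponent-sum of $c$ in every relator of $R$ vanishes. In the paper this is essential: it ensures $c^k\neq 1$ in $\PP'$ for $k\neq 0$ (and, via Greendlinger's lemma for $C'(1/6)$, that $ac^k$ and $bc^k$ are nontrivial), which rules out closed $c$-cycles in the diagram and forces the commutator regions to have the rigid grid structure needed for your ``pushing the $c$-movement out to the block boundaries'' step. Without this, the normalisation of commutator cells cannot be carried out.
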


Before we prove Theorem~\ref{t:encoding}, we show how it implies
Theorem~\ref{t:R_word_unsolvable}.

\begin{proof}[Proof of Theorem~\ref{t:R_word_unsolvable} modulo
  Theorem~\ref{t:encoding} and Theorem~\ref{t:B'}]

Let us set $\PP = \BB'$ where $\BB'$ is the presentation from
  Theorem~\ref{t:B'}. We consider $\BB'$ specifically for Matiyasevich's
  Thue system and with $g(1) = h(1) = h'(1) = 1$, $g'(1) = 2$, $g(2) = h(2) =
  g'(2) = h'(2) = 3$, $g(3) = h(3) = g'(3) = 4$ and $h'(3) = 5$. We also
  consider an encoding $\mu$ given by
	formulas~\eqref{e:enc_s1}--\eqref{e:enc_t}. It is trivial to check that these $\mu(s_1)$, $\mu(s_2)$, $\mu(k)$ and $\mu(t)$ are cyclically reduced and aperiodic.
	It is easy to check that the
  set of words $\{\mu(s_1), \mu(s_2), \mu(k), \mu(t)\} =\{w_1, w_2, w_3, w_4\}$ satisfies the metric small
  cancellation condition $C'(6)$. Indeed, let $r \in \{w_1, w_2, w_3, w_4\}^*$
  where $r$ is a cyclic shift of $w_i$. If $r = ux$ where $u$ is a piece, then $u$ is
  of form $a^{j_1}b^{j_2}$ or $b^{j_1}a^{j_2}$ for integers $j_1$, $j_2$
  with $|j_1|, |j_2| \leq 8 + 10i$. Thus $|u|\leq 16 + 20i$. 
  On the other hand, $|r| = |w_i| = 58 + 160i$. Thus $|u| < |r|/6$ as required.
  We also easily verify the condition
  that if we add relations $\{s_1, s_2, d, e, k, t\}$ to $\BB'$, then we get a
  free group generated by $c$. (If we rewrite the relations $u_1 = u_2$ 
  containing $c$ as $u_1(u_2)^{-1} = 1$, then the sum of the exponents of $c$
  equals $0$.)

  This means that we have verified the conditions of Theorem~\ref{t:encoding}.
  The resulting presentation is exactly $\PP' = \RR$. Therefore the fact that 
  \[\mu(Q^{-1})\mu(t)\mu(Q)\mu(k)(\mu(k)\mu(Q^{-1})\mu(t)\mu(Q))^{-1}\]
  is trivial in $\RR$
  if and only if $Q$ can be obtained from $P$ in the underlying Thue system
  follows from Theorem~\ref{t:encoding} and Theorem~\ref{t:B'}.
\end{proof}

\paragraph{Van Kampen diagrams.}

Given a group presentation $\PP = \langle X | R\rangle$, assume that each word
in $R$ is cyclically reduced. A van Kampen diagram $\DD$ over $\PP$ is a finite
$2$-dimensional CW-complex embedded in $\R^2$ with the following properties:
\begin{enumerate}[(i)]
  \item $\DD$ is connected and simply connected.
  \item Each \emph{edge} of $\DD$ is oriented and it is labeled with a letter
    $x \in X$ or its inverse. We consider the labelling of the edge equivalent,
    if we swap the direction of the edge while replacing $x$ for $x^{-1}$ or
    vice versa.
  \item For each $2$-cell, there is a \emph{boundary walk} of the cell. This is
    a closed walk in the $1$-skeleton of $\DD$ along the boundary of the
    cell. (In~\cite{lyndon-schupp77} this is called boundary cycle. But we
    prefer the less specific term walk as repetitions of vertices or even edges are
    possible.)
    If we choose
    a starting vertex and pass this boundary walk in arbitrary direction, we
    read on the edges a word in $R^*$ assuming that we replace a symbol 
    with its inverse if the edge is oriented in opposite direction. The word we
    read is called the \emph{boundary label} of this $2$-cell. It is unique up to
    an inverse and cyclic permutation.
\end{enumerate}

A \emph{boundary label} of a van Kampen diagram $\DD$ is a word read
on the boundary walk of $\DD$ starting at some vertex, choosing some
orientation (and again taking the inverse if the edge is oriented oppositely).
The boundary label is unique up to cyclic permutations and taking the inverse.


\emph{An important convention:} The orientation of edges in (ii) should be
understood as fully flexible. We allow temporarily swapping the orientation of
any edge if we swap its label from $x$ to $x^{-1}$. This will typically occur
when reading the boundary label according to (iii), we will orient the edges in
the same direction around the boundary walk. (Even if the boundary walk
contains a same edge twice passing through it in opposite directions, we can
think of the orientation of the edge according to the walk when passing through
the edge for the first time and then swap the orientation of the edge in the
second pass.)

The following result is known as van Kampen's lemma; see the discussion
in~\cite[Ch. V.1]{lyndon-schupp77}:
\begin{lemma}
\label{l:van_Kampen}
  \begin{enumerate}[(i)]
    \item
      Let $\DD$ be a van Kampen diagram over a presentation $\PP$ and $w$ a
      boundary label of $\DD$. Then $w$ is a trivial word in $\PP$.
    \item 
      Let $w$ be a trivial word in a presentation $\PP = \langle X| R\rangle$
      and assume that $w$ is cyclically reduced. Then there is a van
      Kampen diagram over $\PP$ with boundary label $w$.  
  \end{enumerate}
\end{lemma}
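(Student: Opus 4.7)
The plan is to prove the two parts separately, using the topological interpretation of van Kampen diagrams as a bridge between combinatorial group theory and planar topology. Part~(i) follows by unwinding the definition once we exploit that the boundary cycle of $\DD$ is nullhomotopic, whereas part~(ii) requires an explicit construction of a diagram from an algebraic certificate of triviality.

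For~(i), I would fix a spanning tree $T$ of the $1$-skeleton $\DD^{(1)}$ and a basepoint on the outer boundary. Since $\DD$ is simply connected, the boundary loop of $\DD$ is nullhomotopic in $\DD$; hence, working in the free group $\pi_1(\DD^{(1)})$ freely generated by the edges outside $T$, it can be written as a product of conjugates of the boundary loops of the $2$-cells of $\DD$ (these normally generate the kernel of $\pi_1(\DD^{(1)}) \twoheadrightarrow \pi_1(\DD) = 1$). Translating each edge to its label in $X \cup X^{-1}$ turns this identity into an equality in the free group $F(X)$ between the boundary label $w$ of $\DD$ and a product of conjugates of elements of $R^*$. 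Since every element of $R^*$ is trivial in $\PP$, so is $w$.

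For~(ii), using that $w$ is trivial in $\PP$, write $w = \prod_{i=1}^N u_i r_i^{\epsilon_i} u_i^{-1}$ in $F(X)$ with $r_i \in R$ and $\epsilon_i \in \{\pm 1\}$. Build a planar diagram $\DD_0$ as a ``bouquet of lollipops'' at a basepoint $v_0$: for each $i$, attach a simple path labelled by $u_i$ ending at a vertex $v_i$, and glue a $2$-cell at $v_i$ whose attaching loop reads $r_i^{\epsilon_i}$. Arranging the lollipops in cyclic order around $v_0$ embeds $\DD_0$ in $\R^2$, and its boundary walk reads the product $u_1 r_1^{\epsilon_1} u_1^{-1} \cdots u_N r_N^{\epsilon_N} u_N^{-1}$, which freely reduces to $w$. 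Then iteratively ``fold'' adjacent pairs of edges on the outer boundary walk that are labelled by an element and its inverse (identifying the two edges); each such fold strictly reduces the length of the boundary word while preserving planarity, simple-connectedness, and the free-equivalence class of the boundary label, and the process terminates in a diagram whose boundary label is literally $w$ because $w$ is cyclically reduced.

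The hard part will be verifying that the folding process in~(ii) actually yields a valid van Kampen diagram embedded in $\R^2$: the bouquet itself embeds naturally by sectoring the plane around $v_0$, but successive foldings could in principle create non-planar identifications, and one must also check that no folding destroys a $2$-cell attaching map. The standard workaround is to first allow $\DD_0$ to be a \emph{singular} (not necessarily embedded) disk diagram and then invoke a separate lemma showing that every such diagram can be converted into an embedded planar one with the same boundary label by repeatedly removing ``spurs'' and cancelling $2$-cells that share an edge and bear mutually inverse boundary labels; this is the approach worked out in detail in~\cite[Ch.~V.1]{lyndon-schupp77}, which is what I would follow verbatim.
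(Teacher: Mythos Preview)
The paper does not actually prove this lemma: it merely states it as ``van Kampen's lemma'' and refers the reader to the discussion in~\cite[Ch.~V.1]{lyndon-schupp77}. Your sketch is essentially the standard argument given there (lollipop bouquet for the existence direction, folding/reduction to reach the cyclically reduced boundary word), so there is nothing to compare; you have correctly identified the source the paper defers to and reproduced its outline.
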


We will be using Lemma~\ref{l:van_Kampen} repeatedly in the proof of
Theorem~\ref{t:encoding} without explicitly
mentioning it. A diagram obtained from (ii) will be called a \emph{diagram
witnessing that $w$ is trivial in $\PP$}.

In the proof of Theorem~\ref{t:encoding}, we will also need the following
variant of Greendlinger's lemma:

\begin{lemma}[Special case of Thm~4.4 in~\cite{lyndon-schupp77}]
\label{l:greendlinger}
Let $\PP = \langle X|R\rangle$ be a group presentation where each word in $R$
  is cyclically reduced. Assume that $R$ satisfies the metric small
  cancellation property $C'(1/6)$. Let $x$ be a reduced word nontrivial in the
    free group on $X$ but trivial in $\PP$. Then $x$ contains a subword $s$ of
    some $r \in R^*$ with $|s| > |r|/2$. 
\end{lemma}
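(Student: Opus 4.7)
The plan is to argue by induction on the number of 2-cells in a minimal van Kampen diagram. By Lemma~\ref{l:van_Kampen}(ii), since $x$ is trivial in $\PP$ (and cyclically reduced after possibly cyclically reducing — this is harmless because the conclusion is about subwords of some cyclic conjugate), there exists a van Kampen diagram $\DD$ over $\PP$ with boundary label $x$. I would pick such a $\DD$ with the minimum number of $2$-cells. Minimality forces $\DD$ to be \emph{reduced}: no two distinct $2$-cells share an edge on which their boundary labels are mutually inverse, for otherwise one could excise that pair and obtain a smaller diagram (after a free reduction in the resulting boundary label, which does not affect triviality). Since $x$ is nontrivial in the free group on $X$, $\DD$ has at least one $2$-cell.

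Next I would set up the standard structural vocabulary. Call a maximal arc in the $1$-skeleton of $\DD$ along which the degree-$2$ interior vertices are unchanged an \emph{edge} of the dual picture. Such an arc is either \emph{interior} (it lies between two distinct $2$-cells) or \emph{exterior} (it lies on $\partial \DD$). The crucial observation, using reducedness, is that the label of each interior arc is a piece with respect to $R$: it is a common initial segment of two distinct elements of $R^*$, namely the boundary labels of the two incident $2$-cells read starting at a common endpoint. Hence, by the $C'(1/6)$ hypothesis, any interior arc on the boundary of a $2$-cell with boundary label $r \in R^*$ has length strictly less than $|r|/6$.

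The main step is a combinatorial Gauss--Bonnet / Euler-characteristic argument applied to the planar CW-structure $\DD$. After collapsing degree-$2$ interior vertices, let $D_1,\dots,D_F$ be the $2$-cells and let $n_i$ denote the total number of arcs on $\partial D_i$, decomposed into $n_i^{\mathrm{int}}$ interior and $n_i^{\mathrm{ext}}$ exterior arcs. The $C(6)$ condition forces $n_i \geq 6$ for every \emph{interior} $2$-cell (one with $n_i^{\mathrm{ext}} = 0$), which is non-positively curved in the usual angle-assignment. Euler's formula $V - E + F = 1$ for the simply connected diagram then implies that the curvature concentrated on boundary cells must total $+2\pi$; consequently some boundary $2$-cell $D_i$ has few enough interior arcs that its exterior contribution dominates. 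Quantitatively, if all $n_i^{\mathrm{int}}$ interior arcs on $\partial D_i$ have length $<|r_i|/6$ and $n_i^{\mathrm{int}} \leq 2$, then the remaining portion — a single exterior arc of length $>|r_i|/2$ — labels a subword $s$ of $r_i \in R^*$ appearing as a subword of the boundary label $x$.

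The main obstacle is making the curvature count precise: one needs to rule out that a boundary $2$-cell is glued to $\partial \DD$ along several disconnected arcs separated by long interior arcs, and to handle diagrams whose boundary is not a simple cycle (a $2$-cell may touch the boundary along vertices only, not edges). These are the classical technicalities handled in Chapter~V of Lyndon--Schupp; the cleanest route, which I would follow, is to first prove the existence of a \emph{simply connected sub-diagram} to which a clean planar Euler formula applies, and then perform the piece-length accounting on a boundary cell minimizing $n_i^{\mathrm{int}}$. Under $C'(1/6)$ these steps combine to give the stated bound $|s|>|r|/2$.
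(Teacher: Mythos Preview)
The paper does not prove this lemma at all: it is stated with an explicit citation to Theorem~V.4.4 of Lyndon--Schupp and used as a black box. Your sketch is precisely the standard van Kampen diagram / combinatorial curvature argument that constitutes the proof in that reference, so you are doing strictly more than the paper and along the expected lines.

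Two minor corrections to your outline. First, the Euler-characteristic count for $(3,6)$-maps yields a boundary $2$-cell with $n_i^{\mathrm{int}}\le 3$, not $\le 2$; but three interior arcs of length $<|r_i|/6$ still leave an exterior arc of length $>|r_i|/2$, so the conclusion is unaffected. Second, your justification for cyclically reducing $x$ is slightly off: the conclusion concerns subwords of $x$ itself, not of a cyclic conjugate. The correct remark is that a reduced word $x$ factors as $uvu^{-1}$ with $v$ cyclically reduced, and any subword of $v$ is already a subword of $x$, so one may indeed pass to $v$.
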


\begin{proof}[Proof of the `only if' part of Theorem~\ref{t:encoding}]
  The `only if' part of the statement is easy: We first observe that the
  relations $ac=ca$ and $bc = cb$ in $\PP'$ imply $\mu(y_j)c = c\mu(y_j)$ for
  every $j \in [q]$ as $\mu(y_j) = w_j$ is a word using only $a$ and $b$ (and
  their inverses). Thus we can add the relations $\mu(y_j)c = c\mu(y_j)$ to
  $\PP'$ without changing the group it presents. Now we assume that $w$ is
  trivial in $\PP$. Thus there is a van Kampen diagram witnessing that $w$ is
  trivial in $\PP$. We subdivide each symbol $y_j$ (or the inverse) in this
  diagram and replace it with $\mu(y_j)$ (or the inverse) while keeping the
	orientation. We obtain a van Kampen diagram witnessing that $\mu(w)$ is trivial in
  $\PP'$ (some cells may use the relations $\mu(y_j)c = c\mu(y_j)$ on their
  boundaries).
\end{proof}

\paragraph{Number of edges in plane graphs.}
In the proof of Theorem~\ref{t:encoding}, we also need the following
bound for the number of edges in a plane graph, possibly with loops and
multiple edges.

\begin{lemma}[{\cite[Lemma~10.2]{matousek-sedgwick-tancer-wagner18}}]
\label{l:mstw}
  Let $G$ be a plane graph (i.e. a graph embedded in the plane) with $n > 2$
  vertices, possibly with loops and multiple edges. Let us also assume that no
  two parallel edges (connecting the same two vertices) and no two parallel
  loops (attached to the same vertex) are isotopic by an isotopy fixing the
  endpoints and avoiding the other vertices. Let us also assume that the
  interior and the exterior of every loop contains a vertex. Then the number of
  edges of $G$ is at most $3n -6$.
\end{lemma}

We remark that Lemma~10.2 in~\cite{matousek-sedgwick-tancer-wagner18} is stated
for embeddings into the sphere $S^2$ but this is not really a difference as any
plane embedding can be easily considered as an embedding in $S^2$ (and vice
versa).

\paragraph{High-level overview of the proof of the `if' part of
Theorem~\ref{t:encoding}.} Because the proof of the `if' part' is technical and
relatively long, we first aim to sketch some of the ideas used in the proof.
This overview may be useful for understanding the full proof but it is not
strictly necessary. We also point out that not all important ideas can be
explained here because some of them require preliminaries done in the full
proof.

The aim of the `if' part of Theorem~\ref{t:encoding} is the following: we
assume that $\mu(w)$ is trivial in $\PP'$ and we want to deduce that $w$ is
trivial in $\PP$. We consider the van Kampen diagram $\DD'$ 
witnessing that $\mu(w)$ is a trivial word in $\PP'$. We would like to build a
van Kampen diagram $\DD$ witnessing that $w$ is trivial in $\PP$.

The boundary labels of $2$-cells of $\DD'$ are cyclic shifts and/or inverses of
$\mu(r)$ where $r$ is a relation in $\PP$ or of $c^{-1}a^{-1}ca$ or
$c^{-1}b^{-1}cb$. In the latter two cases, the corresponding cells are called
the \emph{commutator cells}. If we are lucky, the commutator cells can be
grouped together in the diagram to form larger cells labeled with
$c^{-1}w_j^{-1}cw_j$ where $j \in [q]$ up to an inverse or a cyclic shift. Note
that $w_j = \mu(y_j)$. If in addition, the larger cells and the non-commutator
cells meet nicely, we can replace each path $\mu(y_j)$ with $y_j$ and each
$\mu(r)$ with $r$ thereby obtaining the required diagram $\DD$. (See also
Figure~\ref{f:replace_commutators} used later in the full proof.) 

The difficulty in the proof is that we are not guaranteed to be lucky enough
that all the aforementioned steps work well. In general, the main idea is to
perform some modifications of $\DD'$ so that we are getting closer and closer
to the lucky case. In the full proof, we rule out some specific unwanted
subdiagrams/subconfigurations directly and then we assume that $\DD'$ is
minimal in a certain sense.\footnote{This minimality criterion is however somehow
subtle. For example, we may need to increase the number of cells in order to
get a diagram closer to minimal.} This further allows us to rule out some
unwanted subconfigurations either directly or via some surgery on the diagram.
An example of such surgery is given in Figure~\ref{f:matching_fully} (modifying
the left diagram to the right one). 

We use the (metric) small cancellation property in the proof twice. First we
use it in the form of Lemma~\ref{l:greendlinger} early in the proof to rule out a
few unwanted configurations. A more substantial use of the small cancellation
property appears later in the minimization procedure. If the cells do not fit
well together (as sketched in the `lucky case') then we are able to build a
certain contact representation of disks where the degrees (numbers of touching
points) correspond to the number of (sub)pieces in the small cancellation property.
On the other hand, such a contact representation cannot exist due to the
properties of planar graphs. 

This finishes the overview and now we provide the
full proof.

\begin{proof}[Proof of the `if' part of Theorem~\ref{t:encoding}]
  For the `if' part of the statement, we assume that $\mu(w)$ is trivial in
  $\PP'$ and we want to deduce that $w$ is trivial in $\PP$. First we need to
  check that certain words are nontrivial in $\PP'$. 
  
\begin{claim}
  \label{c:nontrivial}
  Let $k$ be an integer. Then the words
  $ac^{k}$, $bc^{k}$ are nontrivial in $\PP'$. If, in addition, $k \neq 0$, then $c^k$ is
  nontrivial 
  in $\PP'$.
\end{claim}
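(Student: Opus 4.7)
The plan is to exhibit a single homomorphism $\sigma \colon \PP' \to H \times \Z$ that detects all the words in question simultaneously, where $H := \langle a, b \mid W\rangle$ is the small cancellation quotient of $F(a,b)$ by the normal closure of $W$. Concretely I would set $\sigma(a) = (a, 0)$, $\sigma(b) = (b, 0)$, $\sigma(c) = (1_H, 1)$, and $\sigma(x_i) = (1_H, 0)$. Once this map is shown to be well-defined, we get $\sigma(c^k) = (1_H, k)$, $\sigma(ac^k) = (a, k)$, and $\sigma(bc^k) = (b, k)$, so nontriviality in the target will immediately yield all three statements of the claim.

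The well-definedness check has two parts. The commutator relations $ac=ca$ and $bc=cb$ are clearly respected (each side has image $(a,1)$ or $(b,1)$ in $H \times \Z$). For a relator $\mu(r)$ with $r \in R$, the second coordinate of $\sigma(\mu(r))$ equals the signed sum of $c$-exponents appearing in $\mu(r)$, which coincides with that of $r$ because the encoding $\mu$ leaves $c$ untouched; the hypothesis that $\PP/\langle x_1,\ldots,x_p,y_1,\ldots,y_q\rangle$ is the free group on $c$ forces each $r|_{x_i = y_j = 1}$ to be trivial in $F(c)$, so the $c$-exponent sum vanishes. The first coordinate of $\sigma(\mu(r))$ is represented in $F(a,b)$ by the word obtained from $\mu(r)$ by deleting every $x_i$ and every $c$; what remains is exactly the product of $w_j^{\pm 1}$'s inherited from the $y_j^{\pm 1}$'s of $r$, hence an element of the subgroup $\langle W\rangle \leq F(a,b)$, which lies in the normal closure of $W$ and therefore dies in $H$.

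Once $\sigma$ is in hand, the cases $k \neq 0$ are immediate because the second coordinate is nonzero in $\Z$; in particular this already handles the entire statement about $c^k$. What remains, and is the main (though still modest) obstacle, is to show that $a$ and $b$ are nontrivial in $H$. I would invoke the variant of Greendlinger's lemma stated as Lemma~\ref{l:greendlinger}, applied to the presentation $H = \langle a, b \mid W\rangle$, whose defining set $W$ satisfies $C'(1/6)$ by hypothesis. If we had $a = 1$ in $H$, the length-one reduced word $a$ would have to contain a subword of some cyclic shift of some $w_j^{\pm 1}$ of length strictly greater than $|w_j|/2$; this is impossible since each $w_j$ has length well in excess of $2$ in the intended encoding (in the Matiyasevich-based choice used in Section~\ref{s:modify_borisov}, each $w_j$ even has length greater than $200$). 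The same argument rules out $b = 1$ in $H$, and the claim follows.
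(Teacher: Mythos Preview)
Your proof is correct and follows essentially the same route as the paper: you package into a single homomorphism $\sigma\colon \PP' \to H \times \Z$ what the paper does via two separate quotients (one killing $a,b,x_i$ to detect the $c$-exponent, one killing $c,x_i$ to land in $H=\langle a,b\mid W\rangle$), and both arguments finish by applying Greendlinger's lemma to $H$ to see that $a,b$ survive. One small point: since the claim is stated inside the proof of the general Theorem~\ref{t:encoding}, the bound $|w_j|\geq 2$ should be drawn from the hypothesis $C(6)$ (as the paper does) rather than from the specific encoding of Section~\ref{s:modify_borisov}.
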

  
  \begin{proof}
    First we show that $c^k$ is nontrivial in $\PP'$: For each letter $s \in
    \Sigma$ different from $c$ we add a relation $\mu(s)$ to the relations of
    $\PP'$ and we also add relation $a$, $b$, obtaining a presentation $\PP''$.
    By comparing the relations of $\PP$ and $\PP'$, we obtain that $\PP''$
    presents the same group as adding relations $x_1, \dots, x_p, y_1, \dots,
    y_q$ to $\PP$. Therefore $\PP''$ is a free group generated by $c$. It means
    that $c^k$ is nontrivial in $\PP''$ which implies that it is nontrivial in
    $\PP'$ as well.

    Now, for contradiction, assume that $ac^k$ is trivial in $\PP'$. (The proof is analogous
    for $bc^k$.) For each letter $s \in \Sigma$ (including $c$) we add a
    relation $\mu(s)$ to the relations of $\PP'$ obtaining a presentation
    $\PP'''$. As $ac^k$ is trivial in $\PP'$, we get that $a$ is trivial in $\PP'''$.
    By checking the relations of $\PP'''$, we see that we can remove the
    (trivial) letters $x_1, \dots, x_p$ and $c$ and that every other relation
    is implied by relations $\mu(y_1), \dots, \mu(y_q)$, that is, the relations
    $w_1, \dots, w_q$. This means that
    after removing the trivial letters $\PP'''$ is
    equivalently presented as $\langle a, b| W\rangle$. (Recall that $W
    = \{w_1, \dots, w_q\}$.) By
    Lemma~\ref{l:greendlinger}, using $C'(1/6)$, if $a$ is trivial in $\langle a, b| W\rangle$, then a
    subword of $a$ is a subword of some $u \in W^*$ of length more than the
    half of the length of $u$. However, each word $u \in W^*$ has length at
    least two (actually more) which easily follows from the metric small
    cancellation condition $C'(1/6)$ of $W$. (Actually $C(6)$ is
    sufficient.) This contradicts that $a$ is trivial in
    $\PP'''$.
  \end{proof}

  Let $\DD'$ be a
  van Kampen diagram witnessing that $\mu(w)$ is a trivial word in $\PP'$.

  Given a $2$-cell $\sigma$ in $\DD'$ we say that is an \emph{$R$-cell}, if its
  boundary label is of a form $\mu(r)$ for $r \in R^*$ (up to a cyclic
  permutation or taking an inverse). The only remaining cases are that the
  boundary label of $\sigma$ is either $aca^{-1}c^{-1}$ or $bcb^{-1}c^{-1}$ (up
  to a cyclic permutation or taking an inverse). In these cases, we say that
  $\sigma$ is a \emph{commutator cell}.

 Given a $2$-cell $\sigma$ in $\DD'$, we say that $\sigma$ is \emph{regular} if
  the closure of $\sigma$ is homeomorphic to a disk.

\begin{claim}
\label{c:regular}
  Every commutator cell in $\DD'$ is regular.
\end{claim}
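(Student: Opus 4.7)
The plan is to rule out any non-regular commutator cell by extracting a strictly shorter closed sub-walk of its boundary whose label would be forced to be trivial in $\PP'$, contradicting Claim~\ref{c:nontrivial}. Fix a commutator cell $\sigma$ in $\DD'$ and, without loss of generality (the $b$-case is handled identically using the $bc^k$ half of Claim~\ref{c:nontrivial}, and a cyclic shift or inversion of the label only re-indexes the walk), assume the boundary walk of $\sigma$ reads $a, c, a^{-1}, c^{-1}$ on its four consecutive oriented edges; write the vertices of this walk as $v_0, v_1, v_2, v_3, v_4 = v_0$. Each of the four arcs of the attaching map $S^1 \to \DD'$ is a homeomorphism onto a single labeled edge, so $\overline{\sigma}$ is a topological disk precisely when $v_0, v_1, v_2, v_3$ are four distinct vertices. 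If $\sigma$ is \emph{not} regular, either the attaching map pinches two distinct boundary points to the same vertex, or two of the four arcs are folded onto the same edge; the latter forces the corresponding endpoint vertices to coincide, so in every case we obtain a coincidence $v_i = v_j$ with $0 \le i < j \le 3$.

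The next step is to exploit simple-connectedness of $\DD'$: whenever $v_i = v_j$ with $i < j$, the portion of the boundary walk going from $v_i$ to $v_j$ is a closed walk in the $1$-skeleton of $\DD'$, hence null-homotopic in $\DD'$, hence its label is trivial in $\PP'$. The six possible coincidences yield respectively the labels
\[
 a,\quad ac,\quad aca^{-1},\quad c,\quad ca^{-1},\quad a^{-1}.
\]
Each of these must, however, be nontrivial in $\PP'$: the words $a = ac^{0}$, $ac = ac^{1}$, $c = c^{1}$, $(ca^{-1})^{-1} = ac^{-1}$, and $a^{-1}$ are all covered directly by Claim~\ref{c:nontrivial}, while $aca^{-1} = 1$ would force $c = 1$, again contradicting the claim. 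Each identification is thus impossible, so $\sigma$ must be regular.

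I expect essentially no serious obstacle here: once one observes that a $4$-gon van Kampen cell fails to be regular exactly when its four corner vertices are not all distinct, the proof collapses to a finite case check against Claim~\ref{c:nontrivial}. The only small subtlety worth spelling out carefully is the reduction in the previous paragraph that ``closure not a disk'' can a priori be witnessed by either a vertex pinch or a fold of two boundary arcs onto the same edge, but the fold case automatically identifies the endpoint vertices and hence reduces to the pinch case.
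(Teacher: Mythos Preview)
Your proof is correct and follows essentially the same approach as the paper: if a commutator cell is not regular, some proper sub-walk of its boundary is closed, its label is then trivial in $\PP'$ (you phrase this via null-homotopy in $\DD'$ and the induced map to the presentation complex, the paper phrases it as ``$\DD'$ restricted to the disk $D$ witnesses triviality''), and this contradicts Claim~\ref{c:nontrivial}. Your explicit enumeration of the six sub-walk labels matches the paper's list of possible subwords up to cyclic permutation and inverse; in particular your case $aca^{-1}$ is exactly what the paper calls a ``cyclically non-reduced word'' and is disposed of the same way.
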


\begin{proof}
Assume, for contradiction that there is a commutator cell $\sigma$ which is not
regular. This
    means that the image of the closure of $\sigma$ is not simply connected and
    there is a (closed) disk $D$ such that the boundary of $D$ is formed by
    some proper subword
    of some cyclic permutation and possibly inverse of one of $aca^{-1}c^{-1}$ or
    $bcb^{-1}c^{-1}$. Up to a cyclic permutation and/or inverse,
    $u$ is one of the words $a$, $b$, $c$, $ac$, $ac^{-1}$, $bc$, $bc^{-1}$ or some
    cyclically non-reduced word such as $caa^{-1}$. Each of these words is
    nontrivial due to Claim~\ref{c:nontrivial}. On the other hand, $\DD'$
    restricted to $D$ (as $\DD'$ is simply connected) 
    proves that the aforementioned subword is trivial, which is the required
    contradiction.
\end{proof}

By an \emph{incidence pair} we mean a pair $(\sigma, e)$ where 
\begin{itemize}
 \item  $e$ is an edge labeled $a$, $b$, $a^{-1}$ or $b^{-1}$; and
 \item $\sigma$ is an $R$-cell such that
  $e$ appears on the boundary walk of $\sigma$. 
\end{itemize}
		We also allow $\sigma$ to be
  the outer face of the diagram (by the outer face we mean the complement of
  the diagram in $\R^2$). If $e$ appears on the boundary walk of $\sigma$
  twice, then we have two distinct incidence pairs distinguished by whether
  $\sigma$ appears on the right-hand side or left-hand side. But we do not
  introduce an additional notation in this case.

  Now we define an auxiliary graph $G$ embedded in $\R^2$: The vertices of $G$
  are the midpoints of edges $e$ labelled $a, b, a^{-1}$ or
  $b^{-1}$. Each edge of
  $G$ is represented by a curve within some commutator cell $\sigma$
  connecting the two midpoints of edges with labels $a, b, a^{-1}$ or
  $b^{-1}$. Note that $G$ is embedded and its maximum degree is at most $2$. Note also that the vertices of degree at most $1$ correspond to (the edges of) incidence pairs whereas the vertices of degree 2 do not come from incidence pairs.

  Given two distinct incidence pairs $(\sigma, e)$ and $(\sigma', e')$, we say
  that they are \emph{twins} if the midpoints of $e$ and $e'$ are in the same
  component of $G$; see Figure~\ref{f:twins}. Note that it may either happen that $e = e'$ and the
  midpoint is an isolated vertex of $G$, or $e \neq e'$ and the midpoints of
  $e$ and $e'$ are of degree $1$ in $G$. Note also that for each incidence pair
  there is a uniquely defined twin. We also remark that the edges of twins are
  always labelled with a same letter (if the orientations are suitably chosen).
    
    \begin{figure}
      \begin{center}
	\includegraphics[page=3]{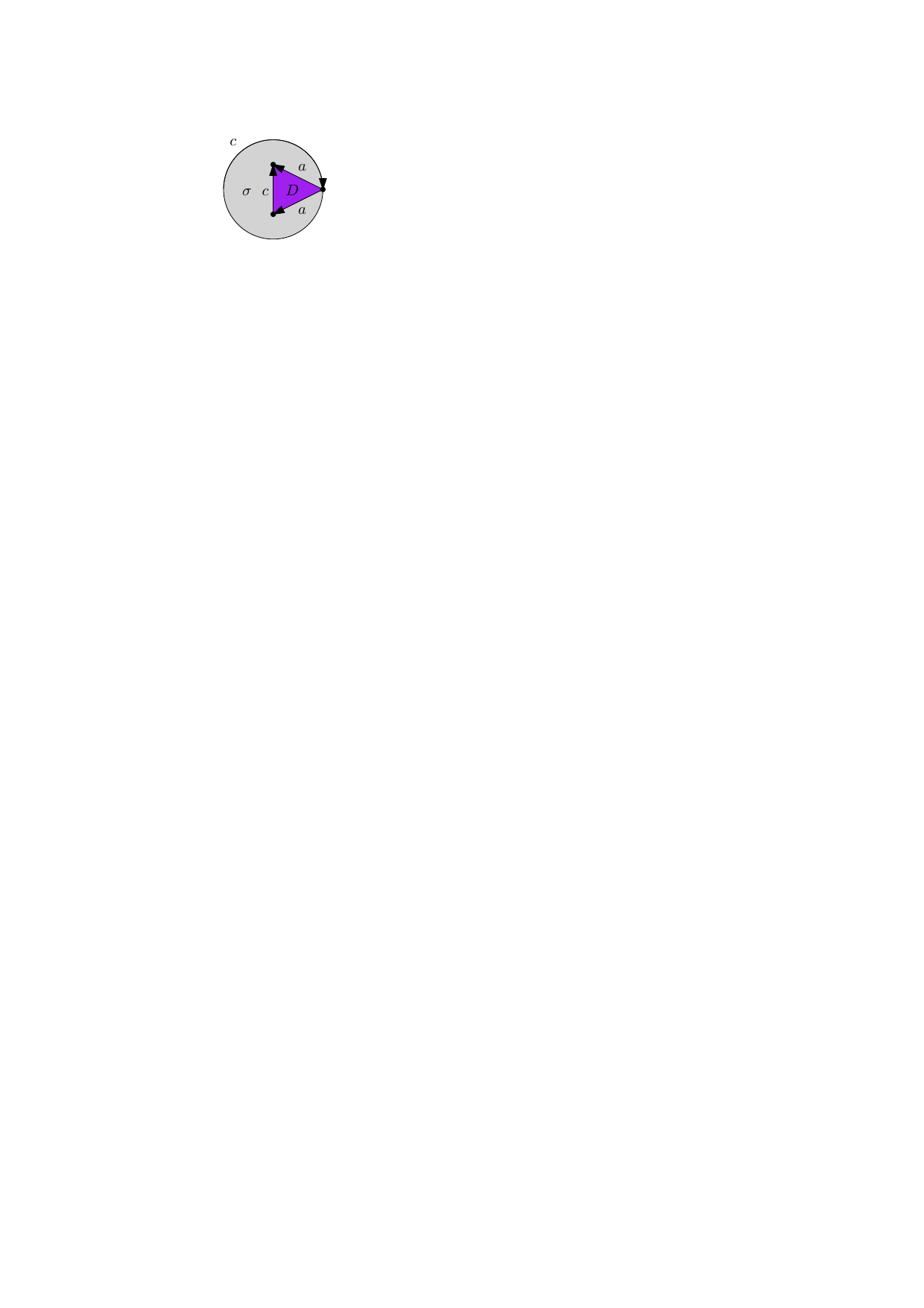}
      \end{center}
      \caption{An example of a part of a diagram showing the graph $G$ and twins.
      The vertices of $G$ are marked by small boxes, the edges of $G$ are
      dashed. For example, the pairs $(\sigma_1, e_1)$ and $(\sigma_2,e_2)$ are
      twins where $e_1$ is the edge labelled $a$ in $\sigma_1$ and $e_2$ is the
      edge labelled $a$ in $\sigma_2$. Also the pairs $(\sigma_2, e_3)$ and
      $(\sigma_3, e_3)$ are twins where $e_3$ is the joint edge of $\sigma_2$
      and $\sigma_3$.}
      \label{f:twins}
    \end{figure}

Now, given an incidence pair $\pp = (\sigma, e)$, recall that the boundary
label of
$\sigma$ is of form $\mu(u)$ where $u$ is a word over $\Sigma \cup \Sigma^{-1}$
where $\Sigma^{-1}$ stands for the set of inverses of letters in $\Sigma$. In more detail,
$u$ might be either $w$ (if $\sigma$ is the outer face), or some $r \in R$
otherwise. Given that $\mu(x_i) = x_i$, $\mu(c) = c$ and $\mu(y_j) =
w_j$, we get that $\mu(u)$ is a concatenation of letters $x_i, x^{-1}_i, c,
c^{-1}$ and words $w_j$ and $w_j^{-1}$ for $i \in [p]$, $j \in [q]$. We do not
even claim that there is a unique way how to write the boundary word of
$\sigma$ as such a concatenation (up to changing the orientation of
$\sigma$).\footnote{With some effort it actually follows from the assumptions
of the theorem that this is unique for a fixed orientation of~$\sigma$. But it
does not help much in the proof of the theorem, thus we rather prefer not
assume this.} We only fix one such way for each $\sigma$ and use it
consistently for all incidence pairs corresponding to $\sigma$. Recall that
$e$ is labelled $a, b, a^{-1}$ or $b^{-1}$. This means that $e$ is inside some
word $w_j$ or $w_j^{-1}$ in the concatenation above. (The decision between $w_j$ or $w_j^{-1}$ depends only on the orientation of $\sigma$.)
Let us denote this word $w(\pp)$
(assuming that we specify the orientation of $\sigma$). (Note that, in
principle, $w(\pp)$ may also be the whole boundary label of
$\sigma$.\footnote{However, this occurs only in a somewhat artificial case when
some $y_j$ (or its inverse) appears in $R$. In such case, $y_j$ can be removed
from the generators and relations without affecting the group presented by
$\PP$.})  

Also the position of $e$ within the word $w(\pp)$ is determined, let
us denote by $i(\pp)$ this position. That is, the label of $e$ is the
$i(\pp)$th letter of $w(\pp)$.

Given twins $\pp = (\sigma, e)$ and $\pp' = (\sigma', e')$, we say that $\pp$
and $\pp'$ are \emph{matched} if the following holds: Once we pick opposite
orientations of $\sigma$ and $\sigma'$, then $w(\pp) = w(\pp')$ and $i(\pp) =
i(\pp')$. In addition, we say that $\pp = (\sigma, e)$ and $\pp' = (\sigma',
e')$ are \emph{fully matched} if they are matched and if the following
condition holds: For every $j$ let $e_j$ or $e'_j$ be the $j$th edge of
$w(\pp) = w(\pp')$ when read along the boundary walk of $\sigma$ or
$\sigma'$ (so that $e = e_{i(\pp)}$ and $e' = e'_{i(\pp)}$). Then we require that $(\sigma, e_j)$ and $(\sigma', e'_j)$ are twins
for every $j$ up to the length of $w(\pp)$.

From now on we assume (without loss of generality) that $\DD'$ satisfies the
following two additional properties.
\begin{enumerate}[(P1)]
  \item Among the diagrams witnessing that $\mu(w)$ is a trivial word, $\DD'$
    has the minimum number of twins which are matched but not fully matched.
  \item Among the diagrams satisfying the previous property, $\DD'$ has the
    minimum number of cells.
\end{enumerate}

\begin{claim}
  \label{c:no_c-cycle}
  There is no closed simple curve in the 1-skeleton of $\DD'$ which would be
  labelled only with letters $c$.
\end{claim}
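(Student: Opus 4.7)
The plan is to argue by contradiction: assuming such a curve $\gamma$ exists, I will produce a planar van Kampen diagram $\DD''$ for $\mu(w)$ that is strictly smaller than $\DD'$ yet still satisfies (P1), violating (P2).

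I would begin by observing that since $\DD'$ is an embedded, simply connected planar CW complex, the Jordan--Sch\"onflies theorem tells us that $\gamma$ bounds a closed disk $D \subseteq \DD'$, which is itself a subdiagram. Reading along $\gamma$, the boundary label of $D$ is a word in $\{c, c^{-1}\}$, freely equal to $c^k$ for some $k \in \Z$. By Lemma~\ref{l:van_Kampen}(i) this word is trivial in $\PP'$, and Claim~\ref{c:nontrivial} then forces $k = 0$.

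Next I would construct $\DD''$ by discarding the interior of $D$ and trivializing what is left. Since $\DD'$ is simply connected, the interior of $D$ contains at least one $2$-cell, so removing all cells, edges, and vertices strictly inside $D$ strictly reduces the cell count. This leaves an annular region with outer boundary labelled $\mu(w)$ and inner boundary $\gamma$ labelled $c^k = c^0$. Because this inner label freely reduces to the empty word, I can iteratively fold pairs of adjacent cancelling $c \cdot c^{-1}$ edges on $\gamma$ (and collapse the resulting spurs) until the inner boundary contracts to a single point, producing a planar van Kampen diagram $\DD''$ for $\mu(w)$ with strictly fewer $2$-cells than $\DD'$.

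The last and most delicate step is to verify that $\DD''$ satisfies (P1). The key geometric observation is that $\gamma$ contains no edges labelled $a$ or $b$, so no such edge can cross $\gamma$; consequently every commutator cell of $\DD'$ lies entirely on one side of $\gamma$, and hence every edge and vertex of the auxiliary graph $G$ lies on one side of $\gamma$. In particular every connected component of $G$ is contained either strictly inside or strictly outside $D$. This forces every twin pair in $\DD'$ to involve incidence pairs lying on the same side of $\gamma$: pairs entirely inside $D$ simply disappear in passing to $\DD''$, while pairs outside $D$, together with their matched and fully matched status, are preserved unchanged. Thus the matched-but-not-fully-matched twin count does not increase, so $\DD''$ satisfies (P1), contradicting (P2). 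The main obstacle is precisely this last point: without the non-crossing property of $G$ with respect to $\gamma$, deleting commutator cells inside $D$ could break a connected component of $G$ that straddled $\gamma$, turning a fully matched pair outside $D$ into a merely matched one and increasing the bad-twin count; the restriction to $c$-only labels on $\gamma$ is exactly what rules this out.
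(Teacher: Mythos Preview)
Your proposal is correct and follows essentially the same approach as the paper's proof: both use Claim~\ref{c:nontrivial} to force the exponent sum along $\gamma$ to be zero, then collapse the disk $D$ by successively identifying adjacent cancelling $c$-edges, and finally invoke the fact that $G$ cannot meet $\gamma$ (because $\gamma$ carries only $c$-labels) to conclude that no matched-but-not-fully-matched twin pairs are created outside $D$, contradicting minimality under (P1)--(P2). The only cosmetic difference is that the paper phrases the collapse as a contraction of the disk in $\R^2$ (ending in a tree), whereas you first delete the interior and then fold the inner boundary to a point; these are two descriptions of the same operation on the resulting diagram.
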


\begin{proof}
  For contradiction such a curve $\gamma$ exists. Let us pick an orientation of
  $\gamma$ and let $u$ be the word that we read along $\gamma$ with respect to
  a given orientation. This word reduces to $c^k$ where $k$ is an integer. If
  $k \neq 0$, we get a contradiction with Claim~\ref{c:nontrivial} as $\gamma$
  bounds a diagram with a boundary label reducing to $c^k$. 

  It remains to consider the case $k=0$. In such case we adjust orientations of
  edges of $\gamma$ so that each label is $c$ (and not $c^{-1}$). We have a
  same number of edges directed to the left as the number of edges directed to
  the right. We will obtain a new diagram $\DD''$ in the following way.

  We pick a pair of neighboring edges pointing out in opposite directions and
  we identify them by a contraction of the disk bounded by $\gamma$ (understood
  as a contraction of $\R^2$; the result is again homeomorphic to $\R^2$); see
  Figure~\ref{f:contract_gamma}. We
  repeat this step until the disk bounded by $\gamma$ is contracted to a tree.

  \begin{figure}
    \begin{center}
      \includegraphics[page=9]{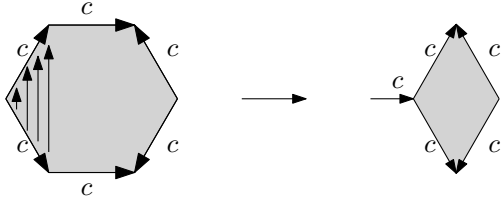}
      \caption{A contraction of the disk bounded by $\gamma$.}
      \label{f:contract_gamma}
    \end{center}
  \end{figure}

  We want to get a contradiction by showing that $\DD'$ was not minimized with
  respect to the properties (P1) and (P2).
  We know that $\DD''$ has the same boundary label as $\DD'$. We also know that
  $\DD''$ has strictly less cells than $\DD'$. 
  
  It remains to check that the
  number of twins in $\DD''$ which are matched but not fully matched is less
  than or equal to the number of those in $\DD'$. We observe that the graph $G$ does not pass through
  $\gamma$. Therefore the contractions we have performed did not affect matched and
  fully matched pairs outside $\gamma$. On the other hand, we might have
  removed some matched pairs but not fully matched pairs inside $\gamma$.
  Therefore $\DD''$ is better with respect to the properties (P1) and (P2)
  which is the required contradiction.
\end{proof}

Let $u$ be a word using only letters in $\{a,b,a^{-1}, b^{-1}\}$ and let $k >
0$ be an integer. We define a \emph{grid diagram} $\GG(u,k)$ (using only commutator
cells) in the following way:

Let $h$ be the length of $w$. We consider the integer $\Z^2$ grid. The set of
vertices of $\GG(u,k)$ is $\Z^2 \cap ([h+1]\times [k+1])$. For every $(i,j) \in
[h] \times [k+1]$ we have a `horizontal' edge directed from $(i,j)$ to $(i+1,j)$
labelled with the $i$th letter of $u$. For every $(i,j) \in [h+1] \times k$ we
have a vertical edge labelled with $c$. The $2$-cells of $\GG(u,k)$  are the
squares bounded by the edges; see Figure~\ref{f:grid_diagram}.

\begin{figure}
  \begin{center}
     \includegraphics[page=15]{diagrams}
     \caption{A grid diagram $\GG(aba,2)$.}
     \label{f:grid_diagram}
  \end{center}
\end{figure}

\begin{claim}
\label{c:1-grid}
Assume that $\sigma_1, \dots, \sigma_k$ are distinct commutator cells such that for
  every $i \in [k-1]$ the cells $\sigma_i$ and $\sigma_{i+1}$ share an edge
  labelled with a label in $\{a,b,a^{-1},b^{-1}\}$. Then the subdiagram formed
  by the closure of $\sigma_1 \cup \cdots \cup \sigma_k$ is isomorphic to
  $\GG(\ell,k)$ for some $\ell \in \{a,b,a^{-1},b^{-1}\}$. (In the isomorphism,
  we also allow a reflection of the diagram.)
  
\end{claim}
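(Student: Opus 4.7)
}
The plan is to proceed by induction on $k$. The base case $k=1$ is immediate: by Claim~\ref{c:regular}, the closure of a single commutator cell is a disk whose boundary is a simple $4$-cycle labelled $\ell c \ell^{-1} c^{-1}$ (up to rotation/inverse) for some $\ell \in \{a,b\}$, which is exactly $\GG(\ell, 1)$. For the inductive step, suppose the subdiagram $\DD_{k-1}$ on $\sigma_1,\dots,\sigma_{k-1}$ is isomorphic to $\GG(\ell, k-1)$; identify the two $\ell$-labelled edges of $\sigma_{k-1}$ with its ``top'' edge (on the boundary of $\DD_{k-1}$) and its ``bottom'' edge (shared with $\sigma_{k-2}$ when $k\geq 3$). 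Since in a planar diagram each edge has at most two incident $2$-cells, the edge shared between $\sigma_{k-1}$ and $\sigma_{k}$ cannot be the bottom one for $k \geq 3$; for $k=2$ we invoke the allowed reflection of the grid. Hence $\sigma_k$ is glued along the top $\ell$-edge and its boundary label is a commutator cycle on $\ell$ and $c$, matching the labels already appearing in $\GG(\ell,k-1)$.

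Next I would show that $\sigma_k$ is glued to $\DD_{k-1}$ only along this single top edge, so that $\DD_k$ is obtained from $\DD_{k-1}$ by stacking one further square on top, yielding $\GG(\ell, k)$. Since $\sigma_k$ is regular by Claim~\ref{c:regular}, its boundary walk consists of four distinct vertices and four distinct edges in the $1$-skeleton. Two of these vertices are the endpoints of the shared edge. The remaining two ``new'' corners of $\sigma_k$ must be distinct from all vertices of $\DD_{k-1}$: if a new corner coincided with one of the shared endpoints, the side $c$-edge incident to it would either be a loop or form a bigon with the shared $\ell$-edge, violating the simplicity of $\partial \sigma_k$. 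If a new corner were identified with another vertex of $\DD_{k-1}$ lying in the same $c$-column of the grid, then concatenating the side $c$-edge of $\sigma_k$ with the vertical $c$-path of the grid between those two vertices would yield a simple closed curve labelled only with $c$'s, contradicting Claim~\ref{c:no_c-cycle}.

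The main obstacle is the remaining case, in which a new corner of $\sigma_k$ is identified with a vertex of the \emph{opposite} $c$-column of $\DD_{k-1}$ (or more generally is a vertex through which no all-$c$ path returns to the shared edge). Here Claim~\ref{c:no_c-cycle} does not apply directly, because every cycle through such an identification must cross an $\ell$-edge of the grid. I plan to rule this out by exploiting planarity together with the regularity of every commutator cell: such an identification forces the side $c$-edge of $\sigma_k$ to traverse a region of $\R^2$ cut off by a simple closed curve in $\DD_{k-1}$ consisting of one $\ell$-edge, some $c$-edges, and (if the landing vertex is not at the top row) another $\ell$-edge. The complementary region must then be filled by $2$-cells of $\DD'$, and I will argue that their aggregate boundary label, read along this curve, reduces to a word of the form $c^m$ or $a c^m$ or $b c^m$ with $m \neq 0$, contradicting Claim~\ref{c:nontrivial}. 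Once all new vertices of $\sigma_k$ are genuinely new, no edge of $\sigma_k$ can coincide with an existing edge (edges are determined by their endpoints together with their label in the $1$-skeleton), and the subdiagram $\DD_k$ is literally the grid $\GG(\ell, k)$ with one extra row on top, completing the induction.
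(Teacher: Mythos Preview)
Your induction scheme misses a case that cannot be ruled out with the tools you invoke. When you attach $\sigma_k$ along the top $\ell$-edge of $\DD_{k-1}$, the boundary word of $\sigma_k$ (some cyclic shift or inverse of $\ell c\ell^{-1}c^{-1}$) can match that shared edge in two ways: either the $c$-edges of $\sigma_k$ are oriented the same way as those of $\sigma_{k-1}$, or the opposite way. Only the first yields $\GG(\ell,k)$. In the second case the two ``new'' corners of $\sigma_k$ may well be distinct from every vertex of $\DD_{k-1}$---so your vertex-distinctness checks all pass---yet the resulting labelled subdiagram (already for $k=2$) has outer boundary word $\ell\, c\, c^{-1}\ell^{-1} c\, c^{-1}$ and is \emph{not} isomorphic to $\GG(\ell,2)$, even allowing a reflection. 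None of Claims~\ref{c:regular}, \ref{c:nontrivial}, \ref{c:no_c-cycle} excludes this configuration, so your final sentence (``the subdiagram $\DD_k$ is literally the grid $\GG(\ell,k)$ with one extra row on top'') is unjustified.

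This opposite-orientation configuration is exactly what the paper calls a \emph{reducing pair}, and it is eliminated using the minimality hypothesis~(P2) on $\DD'$, which your argument never touches: a reducing pair can be contracted, strictly decreasing the number of cells while leaving the (P1)-count unchanged, contradicting the choice of $\DD'$. Accordingly the paper structures the proof differently. Almost all the work is spent on $k=2$: showing that the closure of $\sigma_1\cup\sigma_2$ is a disk, invoking (P2) to exclude reducing pairs, and deducing that the common path has length exactly~$1$. For $k\geq 3$ the paper applies the inductive hypothesis to \emph{both} chains $\sigma_1,\dots,\sigma_{k-1}$ and $\sigma_2,\dots,\sigma_k$; since these two copies of $\GG(\ell,k-1)$ share the cell $\sigma_{k-1}$, the $c$-orientations of $\sigma_k$ are automatically forced to agree with the rest, and it only remains to exclude identifications between the outer vertices of $\sigma_1$ and $\sigma_k$ via Claim~\ref{c:nontrivial}.
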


  
  \begin{figure}
    \begin{center}
      \includegraphics[page=11]{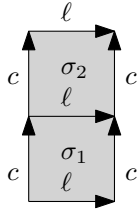}
      \caption{Labels of edges of $\sigma_1$ and $\sigma_2$.}
      \label{f:two_commutators}
    \end{center}
  \end{figure}

\begin{proof}
  The case $k=1$ is the contents of Claim~\ref{c:regular}. We will spend the
  most effort with case $k=2$ which is essentially a straightforward case analysis ruling out all other
  options, using Claims~\ref{c:nontrivial}, \ref{c:regular}
  and~\ref{c:no_c-cycle}. Then, for $k \geq 3$, the claim will easily follow by
  induction.

 First we, therefore, assume $k=2$. Let $P$ be a maximal path shared by boundaries of $\sigma_1$ and $\sigma_2$
  containing an edge $e$ labelled $\ell$. Because we know that each commutator cell
  is regular (Claim~\ref{c:regular}), we know that the edges preceding $P$ and
  the edges following $P$ along each of
  the boundaries are not shared. Let $\sigma$ be an open disk obtained by
  taking the union of $\sigma_1, \sigma_2$ and the interior of $P$. We can
  consider $\sigma$ as a cell in some other diagram obtained by removing the
  edges of $P$ from $\DD'$. The boundary label of $\sigma$ with respect to
  some orientation is then some word that contains at most two pairs of
  edges labelled $c$ and $c^{-1}$ and at most one pair of edges labelled $\ell$
  and $\ell^{-1}$.

  First we observe that the closure of $\sigma$ (which is the same as the
  closure of $\sigma_1 \cup \sigma_2$) is a disk. If this is not the case, then
  the boundary walk of $\sigma$ decomposes in at least two cycles such that the
  words read along these cycles are some subwords $u$ of the boundary label of
  $\sigma$. Due to Claim~\ref{c:no_c-cycle} we know that such a subword contains
  either $\ell$ or $\ell^{-1}$. However, as we have at most one pair of edges
  labelled $\ell$ and $\ell^{-1}$, and at least two cycles, 
  we have exactly one occurrence of $\ell$ and $\ell^{-1}$ together in each of
  these cycles. Due to simple connectedness $u$ bounds some diagram, thus $u$
  is trivial in $\DD'$. But this contradicts Claim~\ref{c:nontrivial}.

  Now we know that the closure of $\sigma$ is a disk. We say that $\sigma_1$
  and $\sigma_2$ is a reducing pair if we pick opposite orientations on
  $\sigma_1$ and $\sigma_2$ then we read the same word along the boundary of
  $\sigma_1$ and $\sigma_2$ when starting with $e$. We will show that
  $\sigma_1$ and $\sigma_2$ cannot be a reducing pair.

  Indeed, if $\sigma_1$ and $\sigma_2$ is a reducing pair, we orient the edges
  on the boundary of $\sigma$ according to the orientations of $\sigma_1$ and
  $\sigma_2$ and we perform a contraction of $\sigma$ identifying the labels
  which is possible since $\sigma_1$ and $\sigma_2$ is a reduction pair
  obtaining a diagram $\DD''$; see Figure~\ref{f:contract_reducing} for an
  example when $P$ has length 1.
  
  \begin{figure}
    \begin{center}
      \includegraphics[page=10]{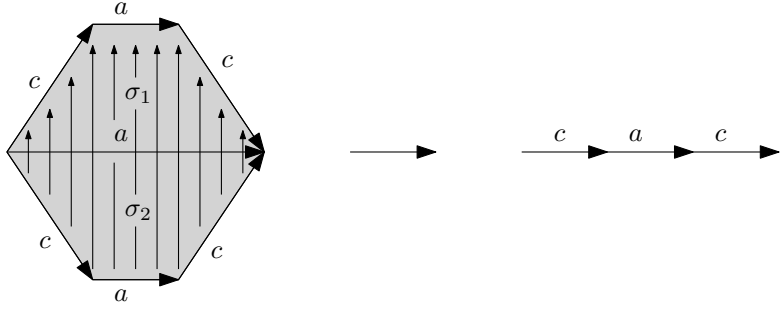}
      \caption{A contraction of a reducing pair.}
      \label{f:contract_reducing}
    \end{center}
  \end{figure}

  The diagram $\DD''$ has the same boundary word as $\DD'$ but it has less
  cells. The counts for property (P1) are the same in $\DD''$ and $\DD'$
  because by considering possible cases, the effect of contraction of a
  reducing pair on $G$ is either a contraction of two edges (if $P$ does not
  have two edges labelled $\ell$ or $\ell^{-1}$) or removing a cycle on two
  edges from $G$ (if $P$ has two edges edges labelled $\ell$ or $\ell^{-1}$).
  None of these changes affects matched or fully matched pairs of twins. Thus
  we get a contradiction that $\DD'$ is a minimal graph satisfying (P1) and
  (P2).  

  Thus, we get that $\sigma_1$ and $\sigma_2$ is not a reducing pair. By
  checking possible overlaps of the commutator cells, we get that the length of
  $P$ is 1. Let us orient the edges on the boundary $\sigma_1$ and $\sigma_2$
  so that we use only the labels $\ell$ and $c$ (but not $\ell^{-1}$ or
  $c^{-1}$). Then the fact that $\sigma_1$ and $\sigma_2$ is not a reducing
  pair translates so that the labelling of the edges on the boundary of
  $\sigma_1$ and $\sigma_2$ is as in Figure~\ref{f:two_commutators} up to
  reverting all orientations of edges labelled $\ell$ and/or reverting all
  orientations of edges labelled $c$. In other words, the subdiagram formed by
  the closure of $\sigma$ is isomorphic to $\GG(\ell, 2)$. (Reverting
  orientations of these collections of edges corresponds to reflections. They
  may possibly swap the position of $\sigma_1$ and $\sigma_2$.)

  To finish the proof, we now assume that $k \geq 3$ and that the claim is true
  for $k-1$. By induction, we know that the subdiagrams formed by closures of
  $\sigma_1 \cup \cdots \cup \sigma_{k-1}$ and $\sigma_2 \cup \cdots \cup
  \sigma_k$ are isomorphic to $\GG(\ell, k-1)$. Therefore, the only way how not
  to arrive at a diagram isomorphic with $\GG(\ell,k)$ is if some vertex of
  $\sigma_1$ not in $\sigma_2$ is identified with some vertex $\sigma_k$ not in
  $\sigma_{k-1}$. This however creates a cycle either with $k$ labels $c$ or
  $k$ labels $c$ and one label $\ell$ or $\ell^{-1}$. This is a contradiction
  with Claim~\ref{c:nontrivial} (taking the subdiagram bounded by the cycle).
\end{proof}

\begin{claim}
\label{c:fully_matched}
  Every matched pair of twins in $\DD'$ is fully matched.
\end{claim}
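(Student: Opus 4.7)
The plan is to argue by contradiction: assume $\pp = (\sigma, e)$ and $\pp' = (\sigma', e')$ is a matched pair of twins that is not fully matched, and construct a diagram $\DD''$ witnessing that $\mu(w)$ is trivial in $\PP'$ with strictly fewer matched-but-not-fully-matched pairs, thereby contradicting the minimality property~(P1).

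First I analyze the local structure of $\DD'$ near this pair. Since $\pp$ and $\pp'$ are twins, Claim~\ref{c:1-grid} supplies a grid $\GG(\ell, k)$ of $k$ commutator cells stretching between $\sigma$ and $\sigma'$ at the position $j := i(\pp) = i(\pp')$, where $\ell$ is the common label of $e$ and $e'$. At each other position $i$ at which $(\sigma, e_i)$ and $(\sigma', e'_i)$ is also a pair of twins, an analogous grid of height $k_i$ exists. If positions $i$ and $i+1$ are both twinned, the right column of $c$-edges of the grid at position $i$ and the left column of $c$-edges of the grid at position $i+1$ are both $c$-labeled paths between the same pair of vertices (namely, those at which consecutive edges of $\sigma$'s and of $\sigma'$'s boundary walks meet). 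If those paths were distinct, together they would contain a closed simple curve labeled only by $c$, contradicting Claim~\ref{c:no_c-cycle}. Hence they must coincide; in particular all heights $k_i$ agree; and the grids at consecutive twinned positions merge coherently into one larger grid of uniform height $k$.

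Next, let $i^*$ be a position adjacent to the combined grid at which $(\sigma, e_{i^*})$ and $(\sigma', e'_{i^*})$ fails to be a pair of twins (such $i^*$ exists by assumption). I perform surgery to extend the combined grid by one more column: I duplicate the current rightmost $c$-column into two parallel copies and glue in between them a new column of $k$ commutator cells, each with boundary label $\ell^* c (\ell^*)^{-1} c^{-1}$, where $\ell^*$ is the common label of $e_{i^*}$ and $e'_{i^*}$; the top of the new column is identified with $e_{i^*}$ (so the new top cell becomes adjacent to $\sigma$) and the bottom with $e'_{i^*}$ (so the new bottom cell becomes adjacent to $\sigma'$). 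Iterating at every position of $w(\pp) = w(\pp')$ where twinning fails, I obtain the desired diagram $\DD''$ in which $(\pp, \pp')$ becomes fully matched.

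Finally, $\DD''$ has the same boundary label as $\DD'$ since the surgery is purely internal, and the newly inserted commutator cells participate only in twin relations wholly within the extended grid, which are fully matched with each other by construction, so no new matched-but-not-fully-matched pairs are introduced. Hence $\DD''$ strictly decreases the (P1) count, a contradiction. The main technical obstacle I anticipate is the precise topological description of the cut-and-insert surgery: one must re-route the face previously adjacent to the duplicated $c$-column (which may be an $R$-cell, a commutator cell, or the outer face) without altering its boundary label, and verify that no hidden twin pair involving the newly created edges ends up matched but not fully matched. Working this out likely requires a short case analysis of what the diagram looks like on the outer side of the $c$-column, which is where I expect Figure~\ref{f:matching_fully} to come in.
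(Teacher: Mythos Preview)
Your approach is the same as the paper's: assume a matched-but-not-fully-matched pair exists, use Claims~\ref{c:1-grid} and~\ref{c:no_c-cycle} to assemble a coherent grid over the maximal twinned interval, then extend the interval one position at a time by a cut-and-paste surgery, contradicting~(P1).

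There are two places where your surgery is underspecified compared with what the paper actually does, and they are precisely the ``technical obstacle'' you anticipate.

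\emph{What to duplicate.} You propose to duplicate only the $c$-column and then ``identify'' the top and bottom of the inserted column with the existing edges $e_{i^*}$ and $e'_{i^*}$. But those edges already have a face on their non-$\sigma$ (resp.\ non-$\sigma'$) side, and your description does not say where that face goes. The paper instead takes the longer path $Q' := e_{i^*} \cup Q \cup e'_{i^*}$, passes to a regular neighbourhood of $Q'$, pushes $Q'$ to a parallel copy $\bar Q'$ that shares with $Q'$ only the two extreme vertices $v_{i^* \mp 1}$ and $v'_{i^* \mp 1}$, reattaches every edge that entered $Q'$ from the outer side to the corresponding vertex of $\bar Q'$, and fills the resulting bigon between $Q'$ and $\bar Q'$ with commutator cells. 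In particular the edges $e_{i^*}$ and $e'_{i^*}$ are themselves duplicated, and the old neighbouring faces of these edges now sit across the copies $\bar e_{i^*}$, $\bar e'_{i^*}$, not across $e_{i^*}$, $e'_{i^*}$. So the re-routing you flag concerns the faces across $e_{i^*}$ and $e'_{i^*}$, not merely those across the $c$-column.

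\emph{The non-path case.} Before doing the neighbourhood surgery the paper checks whether $Q'$ is a simple path. If it is not, then (using Claim~\ref{c:nontrivial}) one is forced into the situation $v_{i^* \mp 1}=v'_{i^* \mp 1}$ with $Q$ a single vertex, and the two edges $e_{i^* \mp 1}$, $e'_{i^* \mp 1}$ form a bigon that one simply collapses. You do not treat this degeneration.

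Once these two points are handled your argument and the paper's coincide.
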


\begin{proof}
  Assuming that $\DD'$ contains a pair of twins which is matched but not fully
  matched, we will modify the diagram so that we keep the boundary word and we
  decrease the number of matched pairs which are not fully matched. This will
  contradict property (P1).

  Let $\pp
  = (\sigma, e)$ and $\pp' = (\sigma', e')$ be a matched pair of twins in
  $\DD'$ which is
  not fully matched. Let us fix opposite orientations on $\sigma$ and
  $\sigma'$. Let us write $w(\pp) = w(\pp')$ as $\ell_1 \ell_2 \cdots \ell_h$
  where each $\ell_j$ is $a$, $b$, $a^{-1}$ or $b^{-1}$. Let $i := i(\pp) =
  i(\pp')$. Then we get that $e$ as well as $e'$ is labelled with $\ell_i$. 

   Let $e_j$ or $e'_j$ be the $j$th edge of $w(\pp) = w(\pp')$ 
   when read along the boundary walk of $\sigma$ or $\sigma'$. We get that
   $e_j$ as well as $e'_j$ is labelled with $\ell_j$. Let $[i_-, i_+]$ be a
   maximum interval containing $i$ such that for every $j \in [i_-, i_+]$ we
   get that $(\sigma, e_j)$ and $(\sigma', e'_j)$ are twins. This interval may
   contain only one element $i$ but we know that it is not $[h]$ because $\pp$
   and $\pp'$ are not fully matched.

   We will assume that $i_- > 2$ and we will modify the diagram so that we
   extend the maximum shared interval to $[i_- - 1, i_+]$ while keeping all
   other fully matched pairs and keeping the number of incidence pairs.
   Analogous procedure can be also used if $i_+ < h$. Therefore, after a finite
   number of steps, we extend the shared interval to $[h] = [1,h]$ thereby obtaining
   new fully matched pairs.

  Let us denote some of the vertices of our diagram $v_1, \dots, v_{h+1}$ so that
  the walk $v_1, e_1, v_2, e_2, \dots, e_h, v_{h+1}$ is the walk along which we
  read the word $w(\pp)$ in $\sigma$. (See Figure~\ref{f:matching_fully},
  left.) Analogously, we denote $v'_1, \dots
  v'_{h+1}$ along $\sigma'$. Let $P$ be the path in graph $G$ 
  connecting the midpoints of the edges $e_{i_-}$ and $e'_{i_-}$. This path may possibly consist of a single
  vertex. Then, using Claim~\ref{c:1-grid} on commutator cells along $P$, there
  is a unique path $Q$ (possibly trivial) from $v_{i_-}$ to $v'_{i_-}$
  in $\DD'$ such that each edge of $Q$ is parallel with some edge of $P$
  within some commutator cell. Following the direction form $v_{i_-}$ to $v'_{i_-}$, this path $Q$ uses only the edges of commutator cells
  all labelled $c$ or all labelled $c^{-1}$.
  By $Q'$ we denote the walk obtained from $Q$ by
  prepending the vertex $v_{i_- -1}$ and the edge $e_{i_- -1}$ and by appending
  the edge $e'_{i_- -1}$ and the vertex $v'_{i_- -1}$. The labels of edges of
  $Q'$ are $c$, $c^{-1}$, $\ell_{i_- -1}$ or $\ell_{i_- -1}^{-1}$.

    \begin{figure}
      \begin{center}
	\includegraphics[page=4]{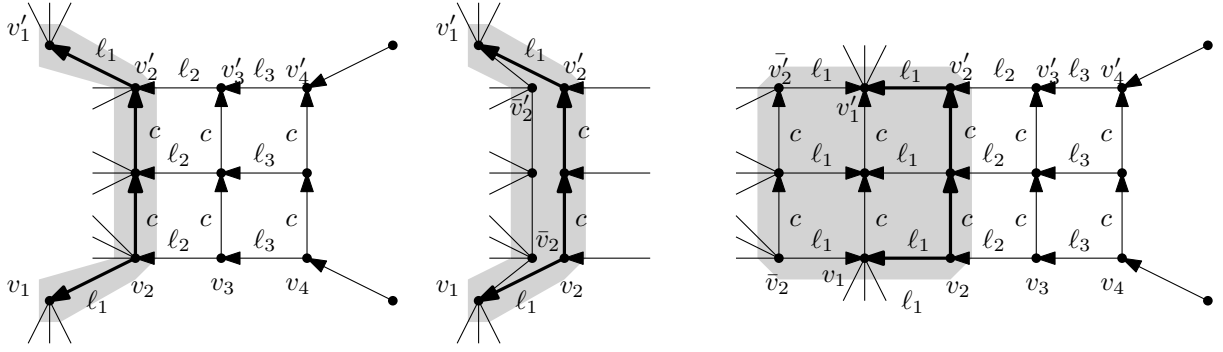}
	\caption{Left: An example of a diagram $\DD'$ with a pair of twins
	which is matched but not fully matched. Here $[i_-, i_+] = [2,3]$, $Q'$
	is denoted by thick edges and $N$ is in gray. Middle: First
	modifications of $\DD'$; $N$ is drawn slightly thicker for an easier
	visualisation. Right: The second modification, adding the commutator
	cells, up to a homeomorphism for easier visualisation.}
	\label{f:matching_fully}
      \end{center}
    \end{figure}

 We will distinguish whether $Q'$ is a path or not.

 We start with the case that $Q'$ is not a path. 
 Because $Q$ is a path while $Q'$ is not a path, this
 means that either one of the vertices $v_{i^- - 1}$ or $v'_{i^- - 1}$ is
 identified with a vertex of $Q$ or $v_{i^- - 1} = v'_{i^- - 1}$.

 The former case yields to a contradiction with Claim~\ref{c:nontrivial} (used on
 the diagram bounded by the cycle obtained from the identification of the vertices).
 Therefore $v_{i^- - 1} = v'_{i^- - 1}$ and they are distinct from any vertex
 of $Q$. In addition $Q$ has to consist of a single vertex, otherwise we get a
 contradiction with Claim~\ref{c:nontrivial} again (all edges along $Q$ have a
 same label $c$ or $c^{-1}$ when $Q$ is oriented from $v_{i^-}$ to $v'_{i^-}$). That is, $v_{i^-} =
 v'_{i^-}$. However, $e_{i^- -1} \neq e'_{i^- -1}$ because we assume that 
 $(\sigma, e_{i^- -1})$ and $(\sigma', e'_{i^- -1})$ are not twins; see
 Figure~\ref{f:bigon}, left. In this case, we remove the bigon between the
 edges $e_{i^- - 1}$ and $e'_{i^- - 1}$ and replace it with an edge $\bar
 e_{i^- - 1}$ (oriented according to the orientations of $\sigma$ and
 $\sigma'$ and keeping the label); see Figure~\ref{f:bigon}, right. This modification preserves the
 boundary labels of cells (those that haven't been removed). In addition,
 $(\sigma, \bar e_{i^- - 1})$ and $(\sigma', \bar e_{i^- - 1})$ is now pair of
 twins. This means that any pair of twins which was fully matched before the
 modification is still fully matched after the modification and we have
 extended the interval between $\sigma$ and $\sigma'$ of matched pairs. After
 repeating this procedure now with the shared interval $[i_- -1, i^+]$ and
 using analogous procedure for $i_+ < h$, we will get all twins along the
 shared interval fully matched which is the required contradiction. (Of course, sometimes, we may fall into the
 case that $Q'$ is a path, which remains to be explained.)

\begin{figure}
  \begin{center}
    \includegraphics[page=12]{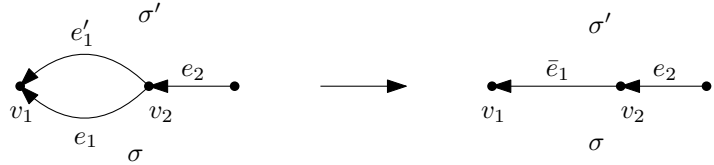}
    \caption{Resolving the case when $Q'$ is not a path. (In the drawing we
    assume that $i^- = 2$.)}
    \label{f:bigon}
  \end{center}
\end{figure}

 Now we assume that $Q'$ is a path. We take a regular neighborhood $N$ of $Q'$ and we modify $\DD'$ inside
  this regular neighborhood. Because $Q'$ is a path we know that $N$ is a disk.
Let us think of $Q'$ as oriented in the plane so that the edges $e_{i_-}$ and
$e'_{i_-}$ enter $Q'$ from the right. We will take a copy of $Q'$ called $\bar
Q'$ connecting $v_1$ and $v'_1$ but shifting all other vertices of $\bar Q'$ a
little bit to the left (that is the vertices which originally belonged to $Q$.
For a vertex $z$ or $Q$ let $\bar z$ be the corresponding vertex on $\bar Q$.
Whenever some edge enters $z$ in $\DD'$ from a left, we modify the diagram so that it now
enters $\bar z$ from the left (and we keep the edge outside $N$); see
Figure~\ref{f:matching_fully}, middle.

Now we orient and label $\bar Q'$ in the same way as $Q$. Then $Q'$ and $\bar
Q'$ bound a cell with a boundary label (word) of a form $\ell_{i_- -
1}\ell_{i_- - 1}^{-1}u\ell_{i_- -1}\ell_{i_- - 1}^{-1}u^{-1}$ where $u$ is a
word using only $c$ or $c^{-1}$. This cell can be easily filled with commutator
cells unless $u$ is trivial; see Figure~\ref{f:matching_fully}, right. If $u$
is trivial, we instead merge the vertices $v_1$ and $v'_1$ and the edges
connecting these vertices to $v_2 = v'_2$ as well as the edges
connecting them to $\bar v_2 = \bar v'_2$. This contracts the cell bounded by
$Q'$ and $\bar Q'$.  


It is easy to check that after these modifications we do not change boundary
labels of cells except possibly adding new commutator cells. In addition each
pair of fully matched twins remains fully matched (such twins were not touched
during the modification). After repeating this procedure now with the shared
interval $[i_- - 1, i_+]$ and using an analogous procedure if $i_+ < h$, we get
all twins along the shared interval are fully matched. We have increased
the number of fully matched pairs as required which proves the claim.
\end{proof}


\begin{claim}
\label{c:fully_matched}
  Every pair of twins in $\DD'$ is matched thereby fully matched.
\end{claim}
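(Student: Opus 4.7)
I would proceed by contradiction, assuming that $\DD'$ contains a pair of twins $\pp = (\sigma, e)$ and $\pp' = (\sigma', e')$ that is not matched. The strategy is to show that the existence of such a pair is incompatible either with the minimality properties (P1), (P2) or with the small cancellation condition $C'(1/6)$ on $W$. Once every pair of twins is shown to be matched, the previous claim immediately upgrades this to fully matched.

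First I would describe the local geometry of the hypothetical unmatched pair using Claim~\ref{c:1-grid}: the commutator cells through which the $G$-path from the midpoint of $e$ to the midpoint of $e'$ passes form a single-column grid diagram $\GG(\ell,k)$, so $e$ and $e'$ carry the same label $\ell \in \{a,b,a^{-1},b^{-1}\}$. Writing $w(\pp) = w_j^{\varepsilon}$ and $w(\pp') = w_{j'}^{\varepsilon'}$ for the encoded-letter subwords of the boundary walks of $\sigma$ and $\sigma'$ that host $e$ and $e'$, being unmatched means that, after fixing opposite orientations on $\sigma$ and $\sigma'$, either these two words disagree or the positions $i(\pp), i(\pp')$ do.

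The main step is to trace the boundary walks of $\sigma$ and $\sigma'$ outward from $e, e'$ step by step, checking at each step whether the next pair of neighbouring $a/b$-labelled edges on the two boundaries again forms a pair of twins (connected by a column of commutator cells as in Claim~\ref{c:1-grid}). When it does, the common subword of $w_j^{\varepsilon}$ and $w_{j'}^{\varepsilon'}$ grows by one letter; when it does not, I would argue that a local surgery analogous to the one depicted in Figure~\ref{f:matching_fully} either decreases the cell count (contradicting (P2)) or decreases the count of matched-but-not-fully-matched pairs (contradicting (P1)), using along the way the non-triviality facts collected in Claim~\ref{c:nontrivial} and the absence of $c$-only cycles guaranteed by Claim~\ref{c:no_c-cycle}. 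Iterating, either the tracing eventually forces $w_j^{\varepsilon} = w_{j'}^{\varepsilon'}$ with aligned positions (contradicting the assumption of unmatchedness) or we accumulate a shared subword longer than $|w_j|/6$, contradicting the $C'(1/6)$ property of $W$.

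I expect the main obstacle to be organising this extension globally rather than locally, since several unmatched pairs together with their surrounding columns of commutator cells can interact in the plane in complicated ways. This is where I would employ the contact representation of disks that is announced in the high-level overview: each column of commutator cells separating two $R$-cells is represented by a disk in $\R^2$ whose contacts with other such disks and with $R$-cells are dictated by the combinatorics of $\DD'$. The small cancellation property bounds the contact degrees (via piece lengths in $W^*$), while planarity of the embedding of $\DD'$ yields an Euler-type average-degree bound on the same quantities. The presence of even a single unmatched twin pair should make these two bounds incompatible, producing the required contradiction.
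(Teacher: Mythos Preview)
Your outline gestures at the right ingredients but contains two genuine gaps.

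First, the surgery you invoke (the manoeuvre of Figure~\ref{f:matching_fully}) is tailored to pairs that are \emph{matched} but not fully matched: it needs $w(\pp)=w(\pp')$ and $i(\pp)=i(\pp')$ in order to know that the next letters on the two sides coincide, so that the new column of commutator cells you insert has a legal boundary label. For an unmatched pair this is exactly what fails, so that surgery is not available here. In fact the paper's proof of the present claim does not use the minimality properties (P1), (P2) at all; (P1) and (P2) were spent entirely in the previous claim.

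Second, your contact representation is set up with the wrong objects. The disks are not ``columns of commutator cells separating two $R$-cells''. In the paper each disk $D_{\ww}$ corresponds to an \emph{unmatched occurrence} $\ww=(\sigma,W)$ of some $w_j$ on the boundary of an $R$-cell (or the outer face); one thickens a push-off of $W$ inside $\sigma$ and then pulls fingers along the $G$-paths so that $D_{\ww}$ touches $D_{\ww'}$ once for every unmatched twin pair linking $W$ and $W'$. Maximal runs of consecutive touching points along both boundaries correspond precisely to pieces shared by $w(\pp)$ and $w(\pp')$, so $C(6)$ gives a \emph{lower} bound of $6$ on the degree of every vertex in the reduced contact graph $H'$. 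The contradiction then comes from planarity: after replacing loops and residual multiple edges by simple edges without changing the edge count, one obtains a simple planar graph with $n$ vertices and at least $3n$ edges, violating $3n-6$. Note that the argument is global---it is not ``a single unmatched pair'' that clashes with the two bounds, but the entire nonempty graph $H'$ that any unmatched pair forces into existence (since, by the previous claim, one unmatched edge on $W$ makes every edge of $W$ unmatched).
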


\begin{proof}
  For purpose of this proof we consider pairs $(\sigma, W)$ where $\sigma$ is
  some cell (or outer face) and
  $W$ is a subwalk  of the boundary walk of $\sigma$ which corresponds to some
  word $w_j = \mu(y_j)$. We say that $(\sigma, W)$ unmatched if for some edge
  $e$ of $W$, the incidence pair $(\sigma, e)$ is not matched with its twin.
  Because we already know that matched and fully matched is equivalent, this
  condition is equivalent with asking that for every edge $e$ of $W$, the
  incidence pair $(\sigma, e)$ is not matched with its twin. For contradiction,
  we assume that there is at least one unmatched pair $(\sigma, W)$.

  For each unmatched pair $\ww = (\sigma, W)$ we create (an image of) a disk $D_{\ww}$ in
  $\R^2$ in the following way. We push the boundary walk $W$ a little bit inside
  $\sigma$ while removing duplicated vertices. This way we obtain a path $\bar
  W$. (In a special case when $W$ corresponds to $w_j$ which is the
  whole boundary label of $\sigma$, $W$ starts and ends in a same
  vertex. Then we disconnect $W$ at the start/end-vertex when pushing. Thus
  $\bar W$ is still a path.)
  We thicken this path to a disk $D'_{\ww}$ inside $\sigma$ which is not yet
  $D_{\ww}$. We also require that distinct $\ww$ yield disjoint disks which can be achieved if we also
  shorten $\bar W$ a little bit before thickening to $D'_{\ww}$. See
  Figure~\ref{f:pushing_disks} for an example.
    
    \begin{figure}
      \begin{center}
	\includegraphics[page=5]{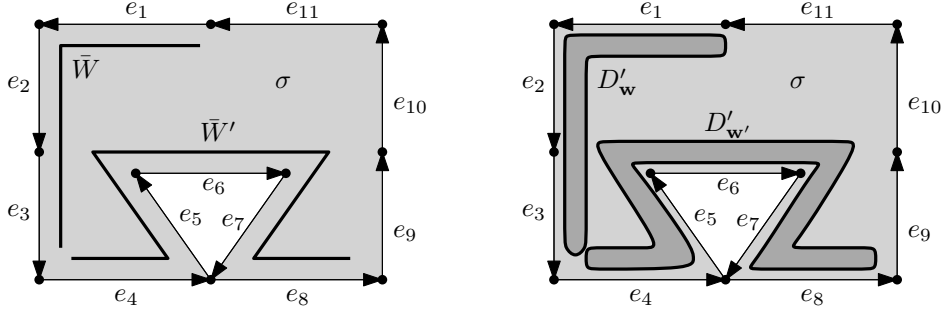}
	\caption{Paths $\bar W$ and $\bar W'$ and disks $D'_{\ww}$ and
	$D'_{\ww'}$ for unmatched pairs $\ww = (\sigma, W)$ and $\ww' =
	(\sigma, W')$ where $W$ is a walk given by edges $e_1$, $e_2$, $e_3$
	and $W'$ is given by edges $e_4, \dots, e_8$.}
	\label{f:pushing_disks}
      \end{center}
    \end{figure}

  Now let us consider some incidence pair $\pp = (\sigma, e)$ such that $e \in W$.
  Let $\pp' = (\sigma', e')$ be a twin of $\pp$. Note that $\pp'$ corresponds to some unmatched
  pair $\ww' = (\sigma', W')$. Now we pull finger moves from disks $D'_{\ww}$
  and $D'_{\ww'}$ along the graph $G$ (the path connecting the midpoints of $e$
  and $e'$) so that the disks $D'_{\ww}$ and $D'_{\ww'}$ touch after this
  modification; see Figure~\ref{f:pulling_fingers} for an example. This we do
  for every pair of twins. This way we obtain the $D_{\ww}$. (Note that
  if $\ww = \ww'$, then pulling fingers causes that $D_{\ww}$ is not a closed
  disk anymore; however, the interior of $D_{\ww}$ is still an open disk.)

    \begin{figure}
      \begin{center}
	\includegraphics[page=6]{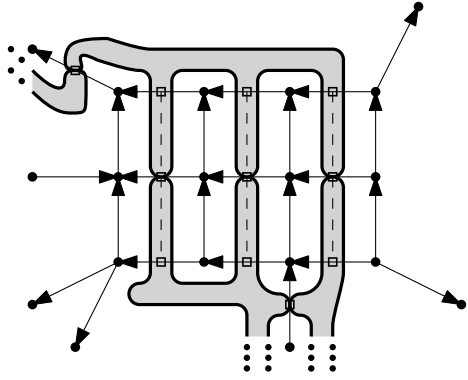}
	\caption{Pulling fingers from the disks $D'_{\ww}$ along paths in graph
	$G$. The diagram corresponds to the diagram from Figure~\ref{f:twins}.}
	\label{f:pulling_fingers}
      \end{center}
    \end{figure}

  At this moment, we have a \emph{contact representation} of disks $D_\ww$
  (with possible self-identifications of the boundary). We
  consider the plane dual graph $H$ of this contact representation. The
  vertices of $H$ are points $v_\ww$ in $D_\ww$ and edges are curves inside the
  disks such that for each touching point there is exactly one edge passing
  through the touching point and connecting the vertices of the disks touching
  at this touching point. The edges do not cross.

We note that $H$ may contain loops and multiple edges.
  Now we argue that each loop in $H$ contains a vertex of $H$ both in its
  interior and its exterior. (By the interior, we mean the interior of the disk
  bounded by this loop and by the exterior we mean the plane minus this disk.)
  Indeed, for contradiction, let $\lambda$ be a loop in $H$ which does not contain a
  vertex of $H$ in its interior. In addition, we can assume that $\lambda$ is an
  innermost such loop. Let $v_\ww$ be the endpoint of the loop (i. e., the loop
  appears inside $D_\ww$). Locally, close  to $v_\ww$, the two arcs of
  $\lambda$ have to be consecutive because any other edge entering the interior
  of $\lambda$ would be either a loop inside $\lambda$ or a non-loop edge with other
  endpoint inside $\lambda$. Then the fact that the arcs of $\lambda$ are consecutive
  corresponds to the fact that the word $w_j$ contains (possibly after a cyclic
  shift) two consecutive letters $\ell\ell^{-1}$ where $w_j$ is the word read
  along the walk $W$, if $\ww = (\sigma, W)$. See Figure~\ref{f:no_empty_loop}
  for a nontrivial example when a cyclic shift is required. This contradicts the assumption
  that the words $w_j$ are cyclically reduced. An analogous argument, using the
  outermost loop, reveals that each loop contains a vertex in its exterior.

    \begin{figure}
      \begin{center}
	\includegraphics[page=18]{diagrams}
	\caption{A nontrivial example with an empty loop. In this example,
	$w_j$ would be of a form $\ell u \ell^{-1}$ when read in a clockwise direction
	along $\sigma$ where $u$ is another word. (The edges of the walk $W$ are thick.)}
	\label{f:no_empty_loop}
      \end{center}
    \end{figure}

Given two edges (possibly loops) of $H$, we say that they are \emph{equivalent}, if
they are isotopic via an isotopy fixing the endpoints and avoiding other
vertices of $H$. We build a graph $H'$ from $H$ by keeping the vertices and 
picking exactly one representative from each maximal collection of of
equivalent edges and loops. Now $H'$ satisfies the assumptions of
Lemma~\ref{l:mstw}, thus we conclude that $H'$ has at most $3n - 6$ edges
where $n$ is the number of vertices of $H$. 

Finally we aim to show that the degree of each vertex in $H'$ is at least $6$
which will yield the desired contradiction.

Consider a maximal collection $\varepsilon_1, \dots, \varepsilon_k$ of equivalent
edges (possibly loops). 
Assume that the endpoints of each of these edges
are $v_{\ww}$ and $v_{\ww'}$ with $\ww = (\sigma, W)$ and $\ww' = (\sigma',
W')$. (We have $\ww = \ww'$ if they are loops.) Then
$\varepsilon_1, \dots, \varepsilon_k$ emanate from $v_\ww$ consecutively without loss of generality
in this order in the clockwise direction. (Here we already use that
each loop contains a vertex in the interior and in the exterior, otherwise an
empty loop could appear in between.) Subsequently, $\varepsilon_1, \dots,
\varepsilon_k$ emanate
from $v_{\ww'}$ consecutively in the counterclockwise direction. (We consider
the other ends of these edges, if they are loops.) According to the
construction of $H$, each $\varepsilon_i$ corresponds to a touching point of
$D_\ww$ and $D_{\ww'}$. The touching points appear consecutively exactly in the same order in
which $\varepsilon_i$ emanate from $v_\ww$ and $v_{\ww'}$ respectively.
The touching points of $D_{\ww}$ and $D_{\ww}$ are in a bijection with pairs of
  twins $\pp = (\sigma, e)$ and $\pp' = (\sigma',e')$ which are not matched.
  Furthermore $e$ belongs to some walk $W$ where $\ww = (\sigma, W)$ is an
  unmatched pair as well as $e'$ belongs to some walk $W'$ where $\ww' =
  (\sigma', W')$ is an unmatched pair. A collection of consecutive touching
  points along a boundary of $D_{\ww}$ as well as along the boundary $D_{\ww'}$
  corresponds to the fact that words $w(\pp)$ and $w(\pp')$ possibly
  after a cyclic shift\footnote{A cyclic shift is necessary if the consecutive
  touching points are the first and the last touching point of $D_\ww$ or
  $D_{\ww}$ when the first and the last is considered in the order induced by
  $W$ or $W'$ respectively.} share a subpiece
  (formed by the labels of letters of the edges in pairs of twins corresponding
  to the touching points). Note that this is indeed a proper subpiece as either
  $w(\pp) \neq w(\pp')$ or $i(\pp) \neq i(\pp')$ because $\pp$ and $\pp'$ are
  not matched. (Here we also use our assumption that the words $w_j$ are
  aperiodic. It is used in the case that $w(\pp) = w(\pp')$ and $i(\pp) \neq
  i(\pp')$.) Due to the small cancellation property for subpieces $C_{\sub}(6)$, we get that
  $w(\pp)$ meets this way at least six subpieces and thus
  there
  are at least six maximal collections of equivalent loops or multiple edges
  from each vertex of $H$. That is, the degree of each vertex in $H'$ is at
  least six.

Finally, given that the degree of each vertex in $H'$ is at least $6$, the
handshaking lemma (in graph theory) implies the number of edges of $H'$ is at least $3n$ which is a
contradiction with our earlier claim that the number of edges is at most
$3n-6$.
\end{proof}

Next we observe, by contradiction, that $G$ does not contain any cycle (that is $G$ is a disjoint
union of paths). Indeed, this is an immediate corollary of
Claim~\ref{c:1-grid} used on commutator cells $\sigma_1, \dots, \sigma_k$ where
$\sigma_i$ is the commutator cell corresponding to $i$th edge of the cycle
(once we fix the first edge and an orientation). Then $\sigma_1$ and $\sigma_k$
cannot close the cycle.

Now let $\kappa$ be a commutator cell. It corresponds to an edge in $G$ (passing
through $\kappa$) and consequently (because $G$ has no cycles) 
to a pair of twins $\pp = (\sigma, e)$ and $\pp' = (\sigma', e')$ such that the
path in $G$ connecting the midpoints of $e$ and $e'$ passes through $\kappa$. Let
$W(\pp)$ be the walk on boundary of $\sigma$ corresponding to the word $w(\pp)$
and $W(\pp')$ be the one on boundary of $\sigma'$ corresponding to $w(\pp')$.
Let $e_j$ be the $j$th edge of $W(\pp)$ and $e'_j$ the $j$th edge of $W(\pp')$,
using the same conventions as when defining a fully matched pair. Due to
Claim~\ref{c:fully_matched}, we know that $w(\pp) = w(\pp')$ (in suitable
orientations) and setting $h$ to be the length of $w(\pp)$ we also know that
$(\sigma,e_j)$ and $(\sigma', e'_j)$ are twins for every $j \in [h]$. Let $v_j$
be the $j$th vertex of $W(\pp)$ and let
$v'_j$ be the $j$th vertex of $W(\pp')$ for $j \in [h+1]$. Finally, for $j \in
[h+1]$ let $Q^-_j = Q^-_j(\kappa)$ be the path from $v_j$ to $v'_j$
using only the edges labelled $c$ or $c^{-1}$ in commutator cells
along the path in $G$ connecting the midpoints of $e_j$ and
$e'_j$. By $Q^+_j = Q^+_j(\kappa)$ we denote the analogous path from $v_{j+1}$
to $v'_{j+1}$. (Here we use Claim~\ref{c:1-grid} to check that such paths
exist.)

\begin{claim}
\label{c:replace_commutators}
  The concatenation of walks $W(\pp)$, $Q^+_{h}$, $W(\pp')$ (in opposite direction)
  and $Q^-_1$ (in opposite direction) forms a simple cycle. The diagram inside
  this cycle is isomorphic with $\GG(w(\pp),k)$ for some $k > 0$.
\end{claim}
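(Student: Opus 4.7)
The plan is, for each $j \in [h]$, to recognize the column of commutator cells traversed by the $G$-path $P_j$ as a small grid, and then to glue these columns along shared $c$-columns into one large grid whose outer boundary is the claimed cycle.

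By the preceding claim every pair of twins is fully matched, so in particular $\pp$ and $\pp'$ are fully matched: $w(\pp) = w(\pp') = \ell_1 \cdots \ell_h$, and $(\sigma, e_j), (\sigma', e'_j)$ is a pair of twins for every $j \in [h]$. Since $G$ is cycle-free (noted just before the claim), the $G$-path $P_j$ joining the midpoints of $e_j$ and $e'_j$ is unique; it crosses commutator cells $\tau_{j,1}, \ldots, \tau_{j,k_j}$ in this order with $k_j \geq 1$, and consecutive cells in this sequence share an $a/b$-edge. By Claim~\ref{c:1-grid}, the closure $\mathcal{C}_j$ of their union is a subdiagram isomorphic to $\GG(\ell_j, k_j)$, whose left and right $c$-columns are exactly $Q^-_j$ (from $v_j$ to $v'_j$) and $Q^+_j$ (from $v_{j+1}$ to $v'_{j+1}$), each of length $k_j$.

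The central step, and the main obstacle, is to show that $Q^+_j = Q^-_{j+1}$ as paths in $\DD'$ for every $j \in [h-1]$; in particular this will force $k_j = k_{j+1}$. Both paths run from $v_{j+1}$ to $v'_{j+1}$ and consist only of edges labelled $c$ (or $c^{-1}$, depending on orientation). Since $\DD'$ is simply connected, the closed walk $Q^+_j \cdot (Q^-_{j+1})^{-1}$ is null-homotopic, so its label is trivial in $\PP'$; that label reduces to $c^m$ for some integer $m$, and Claim~\ref{c:nontrivial} forces $m = 0$, i.e. the two paths have the same length. If however they differed as sequences of edges, following them from $v_{j+1}$ until their first divergence and then until they first rejoin (possibly at $v'_{j+1}$, or earlier giving a bigon) would produce a simple cycle in $\DD'$ all of whose edges are labelled $c$, contradicting Claim~\ref{c:no_c-cycle}. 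Hence $Q^+_j = Q^-_{j+1}$, and writing $k := k_1 = \cdots = k_h$ all columns share a common height.

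Therefore the columns $\mathcal{C}_1, \ldots, \mathcal{C}_h$ glue along their shared $c$-columns into a subdiagram abstractly isomorphic to $\GG(w(\pp), k)$, whose topological boundary traces exactly the walk $W(\pp) \cdot Q^+_h \cdot W(\pp')^{-1} \cdot (Q^-_1)^{-1}$. Simplicity of this cycle, and the absence of extra cells inside it, follow from the same kind of dichotomy: any identification of two distinct abstract grid vertices, or any additional cell in the interior, would give rise to a simple subcycle of $\DD'$ whose label contradicts either Claim~\ref{c:nontrivial} or Claim~\ref{c:no_c-cycle}. Thus the interior of the cycle is exactly $\GG(w(\pp), k)$, as required.
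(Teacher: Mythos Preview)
Your approach is essentially the paper's: recognize each column of commutator cells as a one-letter grid via Claim~\ref{c:1-grid}, show that adjacent $c$-columns coincide via Claim~\ref{c:no_c-cycle}, and glue. The only organisational difference is that the paper runs this as an induction starting from the column $j_0 := i(\pp)$ through which $\kappa$ passes and extending outward one column at a time, whereas you set up all $h$ columns simultaneously and then glue.

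That inductive organisation is not merely cosmetic, however: it is how the paper handles the one point you leave unjustified, namely the assertion that $k_j \geq 1$ for every $j$. A priori the twins $(\sigma,e_j)$ and $(\sigma',e'_j)$ could satisfy $e_j = e'_j$, in which case $P_j$ is a single vertex, $k_j = 0$, and Claim~\ref{c:1-grid} gives you no column $\mathcal{C}_j$ at all. The paper knows $k_{j_0} \geq 1$ because $\kappa$ itself lies on $P_{j_0}$, and then at each inductive step the fact that the grid built so far has height $k > 0$ forces $v_{j_2} \neq v'_{j_2}$, hence $e_{j_2} \neq e'_{j_2}$, so the next column really is nonempty. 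Your argument can be repaired in the same spirit---first observe $k_{j_0} \geq 1$; your proof of $Q^+_j = Q^-_{j+1}$ via Claim~\ref{c:no_c-cycle} goes through even when one side could be trivial; this gives $k_j = k_{j+1}$ for all $j$, hence all $k_j = k_{j_0} \geq 1$---but as written the step ``$k_j \geq 1$'' is asserted before it is earned.
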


\begin{proof}
  Let $j_1 \leq j_2$ be two elements of $[h]$ let $W_{j_1,j_2}(\pp)$ (or
  $W_{j_1,j_2}(\pp')$) denote the subwalk of $W(\pp)$
  (or $W(\pp')$) formed by the
  edges $e_j$ (or $e'_j$) for $j \in [j_1,j_2]$ respectively. Similarly, let
  $w_{j_1,j_2}(\pp)$ be the word formed by the $j$th letters of $w(\pp)$ for $j
  \in [j_1,j_2]$.

  Let $j_0 \in [h]$ be such that $e = e_{j_0}$. (That is, the path in $G$ 
  connecting the midpoints of $e$ and $e'$ intersects $\kappa$.)

  We first show, by induction in $j_2$, that for every $j_2 \in [j_0,h]$ the concatenation of walks
  $W_{j_0,j_2}(\pp)$,
  $Q^+_{j_2}$, $W_{j_0,j_2}(\pp')$ (in opposite direction) and $Q^-_{j_0}$ (in 
  opposite direction) forms a simple cycle and the diagram inside this
  cycle is isomorphic with $\GG(w_{j_0,j_2}(\pp),k)$ for some $k > 0$.

  From
  Claim~\ref{c:1-grid}, used on commutators along the path in $G$ connecting
  the midpoints of $e_{j_0}$ and $e'_{j_0}$ we get the first induction step if
  $j_2 = j_0$.
  
  For the second induction step, we assume that
  the concatenation of walks
  $W_{j_0,j_2-1}(\pp)$,
  $Q^+_{j_2-1}$, $W_{j_0,j_2-1}(\pp')$ (in opposite direction) and $Q^-_{j_0}$ (in
  opposite direction) forms a simple cycle and the diagram inside this
  cycle is isomorphic with $\GG(w_{j_0,j_2-1}(\pp),k)$ for some $k > 0$.
  Because $k > 0$, we in particular get that $v_{j_2} \neq v'_{j_2}$. This
  means that there is at least one commutator cell along the path connecting
  the midpoints of $e_{j_2}$ and $e'_{j_2}$. By Claim~\ref{c:1-grid}, used on
  commutators used along the aforementioned path, we get that 
  the concatenation of walks
  $W_{j_2,j_2}(\pp)$,
  $Q^+_{j_2}$, $W_{j_2,j_2}(\pp')$ (in opposite direction) and $Q^-_{j_2}$ (in
  opposite direction) forms a simple cycle and the diagram inside this
  cycle is isomorphic with $\GG(w_{j_2,j_2}(\pp),k')$ for some $k' > 0$; see
  Figure~\ref{f:merge_grid} according to our current knowledge.

  \begin{figure}
    \begin{center}
      \includegraphics[page=16]{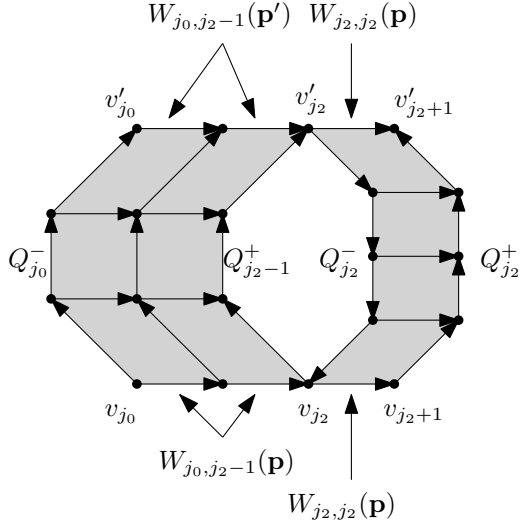}
      \caption{Two concatenated walks during the induction with $k=3$ and
      $k'=4$.}
      \label{f:merge_grid}
    \end{center}
  \end{figure}

  Next we observe that $Q^+_{j_2-1}$ and $Q^-_{j_2}$ have to be identical. If
  this is not the case, then we get cycle with labels only $c$ or $c^{-1}$
  contradicting Claim~\ref{c:no_c-cycle}. However this already means that $k =
  k'$ and the diagrams isomorphic with $\GG(w_{j_0,j_2-1}(\pp),k)$ and
  $\GG(w_{j_2,j_2}(\pp),k')$ in our notation merge to a diagram isomorphic with
  $\GG(w_{j_0,j_2}(\pp),k)$ in our notation. (Note that the concatenation of
  walks $W_{j_0,j_2-1}(\pp)$ and $W_{j_2,j_2}(\pp)$ is just the walk
  $W_{j_0,j_2}(\pp)$ and an analogous claim holds for walks $W'_{j_0,j_2-1}(\pp)$
  etc.)

  If we use $j_2 = h$ in the claim we just proved: The concatenation of walks
  $W_{j_0,h}(\pp)$,
  $Q^+_{h}$, $W_{j_0,h}(\pp')$ (in opposite direction) and $Q^-_{j_0}$ (in
  opposite direction) forms a simple cycle and the diagram inside this
  cycle is isomorphic with $\GG(w_{j_0,h}(\pp),k)$ for some $k > 0$.

  Now, if we use an analogous induction in reverse direction (decreasing $j_0$
  down to $1$), we prove the required claim.
\end{proof}

Now we have acquired all tools to build a diagram $\DD$ witnessing that $w$ is
a trivial word in $\PP$. This will finish the proof of Theorem~\ref{t:encoding}.
In order to get $\DD$ we will be relabelling some edges in $\DD'$ as well as
merging some cells in $\DD'$.

First we take care of commutator cells. Given a commutator cell $\kappa$, let
$W(\kappa)$ be the walk (cycle by Claim~\ref{c:replace_commutators}) obtained
by concatenation of $W(\pp)$, $Q^+_h$,
$W(\pp')$ and $Q_1^{-}$ (last two in opposite directions) as in
Claim~\ref{c:replace_commutators}. In addition, let $\DD'(\kappa)$ be the
subdiagram bounded by $W(\kappa)$. From Claim~\ref{c:replace_commutators} we
know that if $W(\kappa) \neq W(\kappa')$ for some other commutator cell
$\kappa'$, then $W(\kappa)$ and $W(\kappa')$ cannot be nested. Therefore, from
the construction of $W(\kappa)$ it follows that for $\kappa \neq \kappa'$
either $\DD'(\kappa) = \DD'(\kappa')$, or $\DD'(\kappa)$ and $\DD'(\kappa')$ are
disjoint possibly up to the boundary.

Now for any fixed $\kappa$ as above, we know that $\DD'(\kappa)$ is isomorphic
to the grid diagram $\GG(\ww(\pp), k)$ for some $k > 0$, using again the notation
from Claim~\ref{c:replace_commutators}. In addition, $\ww(\pp) = w_j$ for some
$j \in [q]$. We perform the following changes: We merge all edges of $W(\pp)$
(considering $W(\pp)$ oriented so that we read $\ww(\pp)$ according to this
orientation without switching to inverses) into a single directed cell with the
same orientation. On this edge we put label $y_j$. Similarly, for $W(\pp')$ we
merge the edges into a single edge putting the label $y_j$ again. Then we
replace $\DD'(\kappa)$ with the grid diagram $\GG(y_j, k)$ so that we preserve
all orientations (possibly using reflections of the grid diagram); see
Figure~\ref{f:replace_commutators}. Note that these replacements are compatible
with the rest of the diagrams because $W(\pp)$ and $W(\pp')$ are also walks on
boundaries of non-commutator cells. Therefore the inner vertices of $W(\pp)$ or
$W(\pp')$ do not have neighbors outside $\DD'(\kappa)$, thus replacing $W(\pp)$
or $W(\pp')$ with a single edge is possible. (Of course, we also use that
$W(\pp)$ and $W(\pp')$ are paths.)

\begin{figure}
  \begin{center}
    \includegraphics[page=17]{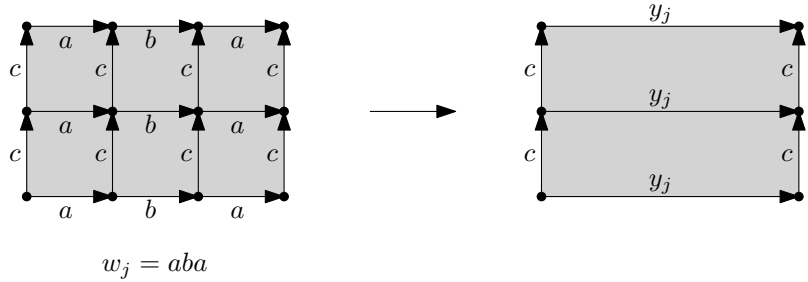}
    \caption{Replacement of commutator cells.}
    \label{f:replace_commutators}
  \end{center}
\end{figure}

We perform these replacements repeatedly as long as there is a commutator cell
$\kappa$ for which $\DD'(\kappa)$ has not been replaced yet.

Now we consider an arbitrary $R$-cell $\sigma$ (or the outer face). If we
consider the boundary walk of $\sigma$, the word we originally read in $\DD'$
was a concatenation of words $w_j$ for $j \in [q]$, or letters $x_i$ for $i \in
[p]$ or the letter $c$, or inverses of the aforementioned words and letters.
Some of $w_j$ might have been already replaced with $y_j$ due to treating
commutator cells. If there is however $w_j$ which haven't been replaced yet, we
consider the corresponding walk $W$ on the boundary of $\sigma$ representing
$w_j$. Due to the fact that every pair of twins is fully matched
(Claim~\ref{c:fully_matched}), this is also
a walk on boundary of a neighboring cell $\sigma'$ (it may happen that $\sigma
= \sigma'$). This means, that $W$ is actually a path. We orient this path in
the direction in which we read $w_j$ and we replace this path with an edge
labelled with same orientation labelled $y_j$.

We perform these replacements as long as there is a $\sigma$ and $w_j$
admitting such a replacement. After all these replacements we get the desired
diagram $\DD$. In diagram $\DD$ all occurrences of $w_j$ have been replaced with
$y_j$, other symbols have been either kept, or some symbols $c$ might have been
removed if they appeared inside some grid diagrams. However, in any case, for
the boundary word we have performed the inverse of the encoding $\ell \to
\mu(\ell)$ for all letters $\ell$ on the boundary, and similarly, for all cells
which we use in $\DD$ we have also performed the inverse of the encoding. This
means that $\DD$ witnesses that $w$ is a trivial word in $\PP$.
\end{proof}

\section*{Acknowledgments} 
I would like to thank David E. Speyer for an answer
to my MathOverflow question and Connor Malin for an useful comment on it. I
would also like to thank Cameron McA. Gordon for pointing out an inaccuracy regarding
the smooth structures in an early version of this manuscript. Finally, I would
like to thank an anonymous referee for reading the very paper carefully and
pointing out a few other inaccuracies (that have been corrected).

\bibliographystyle{alpha}
\bibliography{markovs}

\appendix
\section{Proof of Theorem~\ref{t:B'}}
\label{a:borisov_modified}

In this section, our aim is to prove the `if part' of Theorem~\ref{t:B'}. The proof is
essentially identical to the proof of Theorem~1 in~\cite{borisov69}
(Theorem~\ref{t:borisov} in our notation). When comparing
Theorem~\ref{t:B'} and Theorem~1 in~\cite{borisov69}, let us recall that
the only change that we have performed is that we have replaced presentation
$\BB$ by presentation $\BB'$. In more detail, we have removed the relations 
  
\begin{align*}  
  d^iF_ie^ic &= cd^iE_ie^i & &i \in [M] \tag{B4} 
\end{align*}
and replaced them with the relations
\begin{align}
  d^{g(i)}F_ie^{h(i)}c &= cd^{g'(i)}E_ie^{h'(i)} & &i \in [M] \tag{B4'},
\end{align}
where $g, h, g', h'$ are injective functions and the parameter $\alpha$ (for
$\BB$ or $\BB'$) is strictly larger than any value attained by these function.

For verifying Theorem~\ref{t:B'} one extreme would be to announce that a
careful reading of the proof of Theorem~1 in~\cite{borisov69} gives a proof of
(the `if part' of) Theorem~\ref{t:B'} essentially in verbatim. Another
extreme would be to rewrite/copy-paste the technical proof from~\cite{borisov69}
including auxiliary results in order that the proof can be fully understood
while making necessary adjustments. This would mean rewriting/copy-pasting the
majority of Borisov's paper without a clear benefit because the adjustments are
rather minor. 

We have decided for an intermediate way: We will exhaustively pinpoint all
locations in Borisov's proof where an adjustment is necessary and we check that
the same conclusions can be obtained also in our setting. From this point of
view, the proof of Theorem~\ref{t:B'} is not self-contained but it heavily
uses (an English translation of)~\cite{borisov69}. On the other hand, our
attempt is to make the individual statements and conclusions understandable
without discussing~\cite{borisov69}, if the reader is willing to accept that
those are the only places where some adjustments are necessary.

\begin{proof}[Proof of the `if' part of Theorem~\ref{t:B'}---adjustments
  when compared with a proof of Theorem~1 in~\cite{borisov69}]

  Borisov's proof starts with five assertions (mostly) on regularity of certain
  transient letters. (Undefined here.) The first three assertions remain
  unaffected by our change. The assertions IV. and V. are affected; they
  however have the same proof, thus we explain the modifications only once.

 The key in the proof of IV. and V. is to show that certain words $A_\ell$
  defined as $A_\beta = s_\beta$ for $\beta \in [N]$ and $A_{N+i} = d^i F_i
  e^i$ for $i \in [M]$ are free generators (of a subgroup they generate). (There
  is also an analogous statement for certain $B_\ell$ but this one is
  symmetric.) Borisov proceeds with a proof by contradiction and he considers
  several cases 1), 2), 3) and 4) how a trivial (reduced) word $W$ can be composed from the powers of $A_\ell$. In our setting $A_{N+1}$ is replaced with $d^{g(i)} F_i
  e^{h(i)}$. 

  In case 1), Borisov finds a subword of a form $e^{i_1-i_2}$ which has to be
  expressed as a word in $d$ and $e^{\alpha}$. From this he deduces $i_1=i_2$
  as $|i_1 - i_2| < \alpha$ which gives a reducing pair in $W$. In our setting,
  we get a word $e^{h(i_1) - h(i_2)}$. However, the conclusion is the same
  $|h(i_1) - h(i_2)| < \alpha$ which implies $h(i_1) = h(i_2)$. But this also
  implies $i_1 = i_2$ as $h$ is injective.

  Case 2) yield expressing $e^i$ as a word in $d$ and $e^{\alpha}$ which
  implies $i = 0$ (as $i < \alpha$) which yields a contradiction. In our case,
  we get $e^{h(i)}$ getting $h(i)=0$ and yielding the same contradiction. Case
  3) is analogous to case 2) and case 4) is not affected by our change.

  In the statement of V. (used later on), the expression $V_{N+i} = d^i D_ie^i$
  gets replaced either with $V_{N+i} = d^{g(i)} D_ie^{h(i)}$ or $V_{N+i} =
  d^{g'(i)} D_ie^{h'(i)}$ as the need arises. However, this choice is made
  consistently for all $i$ (either we use $g(i)$ and $h(i)$ for all $i$ or we
  use $g'(i)$ and $h'(i)$ for all $i$).

\medskip

The following arguments are not affected by our change until the proof of
  Lemma~4 in~\cite{borisov69}. Proof of this lemma concludes the proof of the
  theorem.  Following the notation of~\cite{borisov69}, we write $U \bequal V$ if two
  words $U$ and $V$ are identical, letter by letter. Lemma~4 states that if $Y
  \bequal P^{-1}LQR$ where $L$ is an irreducible word on letters $c$ and $d$
  (and their inverses) and $R$ is an irreducible word on letters $c$ and $e$
  (and their inverses), and in addition $Y$ is trivial in a presentation
  (group) $\Gamma_2$
  given by relations~\eqref{e:b1}--\eqref{e:b4}, then $P$ and $Q$ can be
  obtained one from another in the underlying Thue system. The proof of Lemma~4
  in~\cite{borisov69} is given by induction on $m$ where $m$ is the number of
  positive occurrences of $c$ in $Y$.

  The first place in the proof of Lemma~4 in~\cite{borisov69} where our change
  of presentation plays a role is a consideration of `case (2)' in this proof.
  In this case, Borisov obtains a word $X$ which can be written as $X \bequal
  d^fQe^r B^{-\sigma_1}_{\ell_1} \cdots B^{-\sigma_s}_{\ell_s}$ while $X$ is
  trivial in a presentation (group) $\Gamma_3$ given by relations~\eqref{e:b1}
  and~\eqref{e:b2}. The words $B_{\ell_j}$ are either $B_\beta = s_\beta$ for
  $\beta \in [N]$, or $B_{N+i} = d^i E_i e^i$ (these are right hand sides
  of~\eqref{e:b3} and~\eqref{e:b4} after removing $c$ and they are counterparts
  of words $A_\ell$ discussed earlier). In our modification of the
  presentation, we have $B_{N+i} = d^{g'(i)} E_i e^{h'(i)}$.

 Next step is to show that subwords $(d^iE_ie^i)^\varepsilon$ for $\varepsilon
 = \pm1$ do not appear more than once in the word $V \bequal
 B^{\sigma_1}_{\ell_1} \cdots B^{\sigma_s}_{\ell_s}$.\footnote{Based on $X \bequal
  d^fQe^r B^{-\sigma_1}_{\ell_1} \cdots B^{-\sigma_s}_{\ell_s}$ and the
  subsequent discussion we expect that we should rather have $V \bequal
 B^{-\sigma_1}_{\ell_1} \cdots B^{-\sigma_s}_{\ell_s}$ but this is a minor
 detail as changing the signs of $\sigma_j$ does not affect the proof.} In our
 case, we of course want to prove that subwords
 $(d^{g'(i)}E_ie^{h'(i)})^\varepsilon$ do not appear more than once.

The desired contradiction is obtained by a series of computations (these are
the displayed math formulas at the end of page 771 in the English translation
of~\cite{borisov69}). The final step is a deduction that $e^{r_3}d^{-i_1
\alpha^{\partial(U_2)}}e^{-i_2}$ is a word in $d$ and $e^{\alpha}$ where
$\partial(U_2)$ stands for length of a certain word $U_2$ and $i_1 \neq 0$. But
this is a contradiction as $i_2 < \alpha$. In our case, the analogous
computations yield that $e^{r_3}d^{-g'(i_1)
\alpha^{\partial(U_2)}}e^{-h'(i_2)}$ is a word in $d$ and $e^{\alpha}$ with
$g'(i_1) \neq 0$. But this is again a contradiction because $h'(i_2) < \alpha$
as well.

After this, Borisov discusses the cases whether $d^i E_i e^i$ appears exactly
once, or it does not appear in $V$.

In the former case, this allows to express $P^{-1}LQR$ from the statement of
Lemma~4 as
\[
  P^{-1} L' cd^{i\alpha^{\partial(K')}}K'E_i K''
  e^{i\alpha^{\partial(K'')}}c^{-1}R'
\]
where $L'$ is some initial segment of $L$, $Q \bequal K'E_iK''$  and $R'$ is
some terminal segment of $R$. In our case, we get
\[
  P^{-1} L' cd^{g'(i)\alpha^{\partial(K')}}K'E_i K''
  e^{h'(i)\alpha^{\partial(K'')}}c^{-1}R'.
\]
This word is trivial in $\Gamma_2$ due to the assumption that $P^{-1}LQR$ is
trivial in $\Gamma_2$. Now, we reiterate Borisov's computations in this
setting (all equalities are in $\Gamma_2$; we repeatedly use \eqref{e:b1},
\eqref{e:b2}, \eqref{e:b3} and~\eqref{e:b4'}):

\begin{align*}
  P^{-1} L' cd^{g'(i)\alpha^{\partial(K')}}K'E_i K''
  e^{h'(i)\alpha^{\partial(K'')}}c^{-1}R' &= 1 \\
  P^{-1} L' K' cd^{g'(i)}E_i e^{h'(i)} c^{-1} K''
  R' &= 1 \\
  P^{-1} L' K' d^{g(i)}F_i e^{h(i)} c c^{-1} K''
  R' &= 1 \\
  P^{-1} L' d^{g(i)\alpha^{\partial(K')}}K' F_i K'' e^{h(i)\alpha^{\partial(K'')}}  R' &= 1 \\
\end{align*}

The last word can be expressed as $PL_1K'F_iK''R_1 \bequal PL_1Q_1R_1$ where $L_1 = L'
d^{g(i)\alpha^{\partial(K')}}$ and $R_1 = e^{h(i)\alpha^{\partial(K'')}}  R'$.
This allows to use induction as explained in~\cite{borisov69}.

Finally, the latter case when $d^i E_i e^i$ does not appear in $V$ is not
affected by our change of presentation and allows induction in the same way as in~\cite{borisov69}.
\end{proof}

\end{document}